\newcommand{\isom}{\cong}
\newcommand{\Z}{{\bf{Z}}}
\newcommand{\N}{{\bf{N}}}
\newcommand{\Q}{{\bf{Q}}}
\newcommand{\Qbar}{{\overline{\Q}}}
\newcommand{\R}{{\bf{R}}}
\newcommand{\C}{{\bf{C}}}
\newcommand{\F}{{\bf{F}}}
\newcommand{\T}{{\bf{T}}}
\newcommand{\m}{{\mathfrak{m}}}
\newcommand{\PP}{\bf{P}}
\newcommand{\Mid}{|} %{\mid\!\!}
\newcommand{\lm}{\Big|}
\newcommand{\miD}{|}%{\!\!\mid}
\newcommand{\rmid}{\Big|}
\newcommand{\divs}{\!\mid\!}
\newcommand{\ndiv}{\!\nmid\!}
\newcommand{\tensor}{\otimes}
\newcommand{\ra}{{\rightarrow}}
\newcommand{\lra}{{\longrightarrow}}
  \newcommand{\textcyr}[1]{%
    {\fontencoding{OT2}\fontfamily{wncyr}\fontseries{m}\fontshape{n}%
     \selectfont #1}}
\newcommand{\Sha}{{\mbox{\textcyr{Sh}}}} %{\hbox{\cyr X}}
\newcommand{\caf}{c_{\scriptscriptstyle{A_f}}}
\newcommand{\If}{{I_{\scriptscriptstyle{f}}}}
\newcommand{\Afdual}{A_f^{\vee}}
\newcommand{\Hom}{{\rm Hom}}
\newcommand{\Ann}{{\rm Ann}}
\newcommand{\End}{{\rm End\ }}
\newcommand{\im}{{\rm im}}
\newcommand{\rank}{{\rm rk}}
\newcommand{\qq}{{\mathfrak{q}}}
\newcommand{\Frob}{{\rm Frob}}
\newcommand{\ord}{{\rm ord}}
\newcommand{\LAf}{{L_{\scriptscriptstyle{A_f}}}}
\newcommand{\OAf}{{\Omega_{\scriptscriptstyle{A_f}}}}
\newcommand{\NerA}{{\mathcal{A}}}
\newcommand{\Tl}{{T_{\scriptscriptstyle{\ell}}}}
\newcommand{\Vl}{{V_{\scriptscriptstyle{\ell}}}}
\newcommand{\Wl}{{W_{\scriptscriptstyle{\ell}}}}
\newcommand{\Zl}{{\Z_\ell}}
\newcommand{\Ql}{{\Q_\ell}}
\newcommand{\Ip}{{I_p}}
\newcommand{\Qp}{{\Q_p}}
\newcommand{\Frobp}{{{\rm Frob}_p}}
\newcommand{\IA}{{I_{\scriptscriptstyle{A'}}}}
\newcommand{\IB}{{I_{\scriptscriptstyle{B'}}}}
\newcommand{\IC}{{I_{\scriptscriptstyle{C'}}}}
\newcommand{\SA}{{S_{\scriptscriptstyle{A'}}}}
\newcommand{\SB}{{S_{\scriptscriptstyle{B'}}}}
\newcommand{\SC}{{S_{\scriptscriptstyle{C'}}}}
\newcommand{\ICp}{{\IC}}
\newcommand{\TCp}{{\T_{\scriptscriptstyle{C'}}}}
\newcommand{\RCp}{{R_{\scriptscriptstyle{C'}}}}
\newcommand{\SCp}{{S_{\scriptscriptstyle{C'}}}}
\newcommand{\Shan}{{\Mid \Sha(A_f) \miD_{\rm an}}}
\newcommand{\Shaf}{{\Mid \Sha(A_f) \miD}}
\newcommand{\dirlimn}{{\displaystyle \lim_{\stackrel{\lra}{n}}}\ }
\newcommand{\dirlimm}{{\displaystyle \lim_{\stackrel{\lra}{m}}}\ }
\newcommand{\comment}[1]{}
\newcommand{\marginalfootnote}[1]{%
   \footnote{#1}\marginpar{\hfill {\sf\thefootnote}}%
}
\newcommand{\edit}[1]{\marginalfootnote{#1}}
\DeclareMathOperator{\tor}{tor}
\newtheorem{lem}{Lemma}[section]
\newtheorem{cor}[lem]{Corollary}
\newtheorem{prop}[lem]{Proposition}
\newtheorem{conj}[lem]{Conjecture}
\newtheorem{thm}[lem]{Theorem}
\theoremstyle{definition}
\newtheorem{rmk}[lem]{Remark}
\newtheorem{eg}[lem]{Example}
\newcommand{\thetitle}
{A visible factor of the special L-value}
\begin{document}
\parindent=2em

\title{\thetitle}
\author{Amod Agashe\footnote{The author was partially supported by National Science Foundation Grant No. 0603668. Mathematics subject classification: 11G40.}
}
\maketitle

\begin{abstract}
Let~$A$ be a quotient of~$J_0(N)$ associated to a newform~$f$
such that the special $L$-value of~$A$ (at $s=1$) is non-zero.
We give a formula for
the ratio of 
the special $L$-value to the real period of~$A$ that expresses
this ratio as a rational number.
We extract an integer factor from the numerator of this formula; 
this factor
is non-trivial in general and is related to certain congruences
of~$f$ with eigenforms of positive analytic rank. 
We use the techniques of visibility to show that,
under certain hypotheses (which includes 
the first part of the Birch and Swinnerton-Dyer 
conjecture on rank), if an odd prime~$q$
divides this factor, then $q$~divides either
the order of the Shafarevich-Tate group or the order
of a component group of~$A$. 
Suppose $p$ is an odd prime such that $p^2$ does not
divide~$N$, 
$p$ does not divide the order of the rational torsion subgroup
of~$A$, and
$f$ is congruent modulo a prime ideal over~$p$
to an eigenform whose
associated abelian variety has positive Mordell-Weil rank.
Then we show that $p$ divides the factor mentioned above;
in particular, $p$ divides the numerator of
the ratio of 
the special $L$-value to the real period of~$A$. 
Both of these results are as implied by 
the second part of the Birch and Swinnerton-Dyer conjecture, and 
thus provide theoretical evidence towards the conjecture.
\comment{
Let~$A$ be a quotient of~$J_0(N)$ associated to a newform~$f$
such that the special $L$-value of~$A$ (at $s=1$) is non-zero.
We extract an integer factor from the ratio of 
the special $L$-value to the real period of~$A$; this factor
is non-trivial in general and is related to certain congruences
of~$f$ with eigenforms of positive analytic rank. 
We use the techniques of visibility to show that,
assuming the first part of the Birch and Swinnerton-Dyer 
conjecture on rank, and 
under some other mild hypotheses, if an odd prime~$q$
divides this factor, then $q$~divides either
the order of the Shafarevich-Tate group or the order
of a component group of~$A$. 
This is as predicted by 
the second part of the Birch and Swinnerton-Dyer conjecture, and 
thus is a partial result towards it.

Our approach may
also be viewed as a way of linking the two parts of the conjecture.
If the level~$N$ is prime,
then under a reasonable hypothesis on special $L$-values of
certain twists of~$f$, we show that if an odd non-Eisenstein prime~$q$ 
divides the factor mentioned above, then $q^2$ divides the 
ratio of the special $L$-value of~$A$ to the period mentioned above.
Both the statements above are}

\end{abstract}

\section{Introduction} \label{section:intro}

Mazur introduced the notion of visibility in order to better 
understand geometrically the elements of the 
Shafarevich-Tate group of an abelian variety.
The corresponding theory, which we call the theory of visibility, 
has been used to show the existence of 
non-trivial elements of the Shafarevich-Tate group
of abelian varieties and motives (e.g., see~\cite{cremona-mazur},
\cite{agst:vis}, \cite{dsw}). 
The second part of the
Birch and Swinnerton-Dyer conjecture gives a 
formula for the order of the Shafarevich-Tate group,
and one might wonder how much of this conjectural order
can be explained by the theory of visibility, as well
as whether, when the theory of visibility implies that 
the Shafarevich-Tate group is non-trivial, the Birch and Swinnerton-Dyer
conjectural order of the Shafarevich-Tate group is non-trivial as well.
While this issue has been investigated computationally
(e.g., see~\cite{cremona-mazur}, \cite{agst:bsd}),
to our knowledge 
there have been no general theoretical results that directly link visibility
to the Birch and Swinnerton-Dyer conjecture. 

In this paper, for abelian subvarieties of~$J_0(N)$ 
associated to newforms and having analytic rank zero,
%modular abelian varieties of analytic rank zero,
we take the first step in theoretically linking visibility
to the second part of the Birch and Swinnerton-Dyer conjecture.
Firstly, when the newform associated to such an abelian variety
is congruent to an eigenform whose associated abelian variety has
positive Mordell-Weil rank,
the theory of visibility shows that
the actual Shafarevich-Tate group of such an abelian variety is non-trivial
(subject to some other minor hypotheses).
We prove that in  this situation, under some additional hypotheses,
the Birch and Swinnerton-Dyer conjectural order of
the Shafarevich-Tate group is non-trivial as well.
Secondly, following an idea of L.~Merel, we  
extract an integer factor from the Birch and Swinnerton-Dyer
conjectural formula for the order
of the Shafarevich-Tate group.
%which measures congruences of the newform associated to the abelian
%variety in question to eigenforms with 
%positive {\em analytic} rank. We show,
This factor is a variant of the notion of the modular degree
and is divisible by certain congruence primes associated to the newform.
%the order of the intersection of certain subquotients
%of the Jacobian variety, and is divisb
We show, using the theory of visibility, that if an odd prime divides
this factor, then it also divides the actual order
of the Shafarevich-Tate group 
under certain assumptions
(the strongest assumption being the first part of
the Birch and Swinnerton-Dyer formula on ranks
of Mordell-Weil groups).
We now describe our results more precisely.
\comment{
This may be seen as a first step in an effort to use the theory of visibility
to show that the Birch and Swinnerton-Dyer
conjectural order of the Shafarevich-Tate group divides the actual order.
In any case, both of the results above provide new theoretical evidence for
the second part of
the Birch and Swinnerton-Dyer conjecture in the analytic rank zero case.}

Let $N$ be a positive integer. Let
$X_0(N)$ denote the modular curve over~$\Q$ associated to~$\Gamma_0(N)$,
and let $J_0(N)$ be its Jacobian, which is an abelian variety
over~$\Q$.
Let $f$ be a newform of weight~$2$ on~$\Gamma_0(N)$. 
We will denote by~$L(f,s)$ the $L$-function associated to~$f$.
Let $\T$ denote the subring of endomorphisms of~$J_0(N)$
generated by the Hecke operators (usually denoted~$T_\ell$
for $\ell \ndiv N$ and $U_p$ for $p\divs N$). 
Let $I_f = {\rm Ann}_{\T} f$.
Let $A_f$ denote the quotient abelian variety $J_0(N)/ I_f J_0(N)$
over~$\Q$, which was introduced by Shimura in~\cite{shimura:factors}. 
If $f$ has integer Fourier coefficients, then $A_f$ is just an elliptic curve,
and the reader may assume this for simplicity; in any casse, in the proof of our
main results, the dimension of~$A_f$ does not play a major role.
Let $\LAf(s)$ denote the $L$-function associated to~$A_f$.
%(extended to an analytic function on the entire complex plane).

The order of vanishing of $\LAf(s)$ at $s=1$ is called the
{\it analytic rank} of~$A_f$, %and by abuse of notation, 
and the order of vanishing of $L(f,s)$ at $s=1$ is called the
{\em analytic rank} of~$f$.
The rank of the finitely generated abelian group~$A_f(\Q)$ is called
the {\em Mordell-Weil rank} of~$A_f$.
The first part of the Birch and Swinnerton-Dyer conjecture is the following:

\begin{conj}[Birch and Swinnerton-Dyer] \label{bsd1}
The {\it analytic rank} of~$A_f$ is equal to its Mordell-Weil rank.
%the rank of~$A_f(\Q)$.
\end{conj}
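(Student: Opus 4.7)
The statement above is \emph{Conjecture} 1.1, the first part of the Birch and Swinnerton-Dyer conjecture, which is one of the major open problems in arithmetic geometry. There is no known proof in general, and the paper itself does not attempt one; rather, Conjecture 1.1 is invoked as a hypothesis in several of the paper's main results (see the abstract). I can therefore only sketch what is known and indicate where the genuine obstacles lie.

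In the analytic rank zero case --- which is precisely the regime relevant to the rest of the paper --- Conjecture 1.1 is known in essentially the generality needed, by the work of Kolyvagin and Logachev (for $A_f$ a $\mathrm{GL}_2$-type quotient of $J_0(N)$), building on the Gross--Zagier formula. The plan in that case is to exploit an Euler system of Heegner points on a suitable modular or Shimura curve: the non-vanishing of $\LAf(1)$ translates, via Gross--Zagier, into the non-triviality of an associated Heegner cohomology class, and Kolyvagin's derivative construction then bounds the $\ell$-Selmer group of~$A_f$ for almost all~$\ell$. Under mild hypotheses on the image of the residual Galois representation of~$f$, this forces $A_f(\Q)$ to be finite, so its Mordell-Weil rank is zero and matches the analytic rank.

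The analogous implication in analytic rank one is also known, and would be proved the same way: Gross--Zagier produces a non-torsion Heegner point, giving a lower bound of $1$ on the Mordell-Weil rank, and Kolyvagin's Euler system bounds the rank above by~$1$. The genuine obstacle --- the reason Conjecture 1.1 is open --- is the case of analytic rank $\geq 2$. Here no construction is known that produces even two independent rational points of infinite order from higher-order vanishing of~$\LAf(s)$ at $s=1$, and no Euler system is known that controls the Selmer group without already presupposing that the analytic rank is at most one. Attacks via $p$-adic $L$-functions, Iwasawa main conjectures, and Bloch--Kato style reciprocity laws have yielded partial results (for example, the $p$-part of rank inequalities under ordinarity hypotheses), but the bound $\rank A_f(\Q) \leq \ord_{s=1}\LAf(s)$ in the presence of higher-order vanishing, and the construction of enough points to match it, both remain completely out of reach. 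For the purposes of this paper, however, only the known rank-zero case is needed, so Conjecture 1.1 can safely be used as a hypothesis in the sequel.
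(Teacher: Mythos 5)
You correctly recognize that this is a \emph{conjecture}, not a theorem: the paper states it as Conjecture~\ref{bsd1} and never attempts a proof, only invoking it later as a hypothesis. Your survey of the known rank-zero and rank-one cases (Gross--Zagier, Kolyvagin--Logachev) and of the obstacles in higher rank is accurate and consistent with how the paper uses the conjecture.
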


Now suppose that $\LAf(1) \neq 0$. Then 
by~\cite{kollog:finiteness}, $A_f$ has Mordell-Weil rank zero,
and the Shafarevich-Tate group~$\Sha(A_f)$ of~$A_f$ is finite.
Let $\NerA$ denote the N\'eron model of~$A_f$ over~$\Z$
and let $\NerA^0$ denote the largest open subgroup scheme
of~$\NerA$ in which all the fibers are connected.
Let $d = \dim A_f$, and let $D$ be a generator of
the $d$-th exterior power of the group of invariant
differentials on~$\NerA$.
Let $\OAf$ denote the volume of~$A_f(\R)$ with
respect to the measure given by~$D$.
If $p$ is a prime number, then the group of~$\F_p$-valued
points of the quotient~$\NerA_{\F_p}/\NerA^0_{\F_p}$ is called
the (arithmetic) component group of~$A$ and its order
is denoted $c_p(A)$.
%$c_p(A) = [\NerA_{\F_p}(\F_p): \NerA^0_{\F_p}(\F_p)]$.
If $A$ is an abelian variety, then we denote by~$A^{\vee}$
the dual abelian variety of~$A$. Throughout this article, we use the 
symbol~$\stackrel{?}{=}$ to denote a conjectural equality.

Considering that $\LAf(1) \neq 0$,
the second part of the Birch and Swinnerton-Dyer conjecture says the following:

\begin{conj}[Birch and Swinnerton-Dyer] \label{bsd2}
\begin{eqnarray} \label{bsdform}
\frac{\LAf(1)}{\OAf} 
\stackrel{?}{=}
\frac {\Mid \Sha(A_f) \miD \cdot \prod_{\scriptscriptstyle{p |N}}  c_p(A_f)}
      { \Mid A_f(\Q) \miD \cdot \Mid A_f^{\vee}(\Q) \miD }.
\end{eqnarray}
\end{conj}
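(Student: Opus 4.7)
The statement is the second part of the Birch--Swinnerton-Dyer conjecture, which is open in general, so a full proof is beyond present reach. Instead I would describe the partial strategy this paper pursues, which is to establish matching $q$-adic divisibilities between the two sides of~(\ref{bsdform}) for those primes~$q$ that are controlled by congruences between newforms.

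The first step is to convert the transcendental ratio $\LAf(1)/\OAf$ into a rational number one can handle arithmetically. Using Shimura's integrality theorems and modular symbols, $\LAf(1)/\OAf$ can be expressed as a ratio of lattice indices inside the $+1$-eigenspace of modular symbols, with numerator determined by the action of $\T/\If$. The second step, following an idea of L.~Merel, is to extract from the numerator of this rational expression a specific integer factor: a variant of the modular degree built from an intersection of~$\If$ with a complementary saturated ideal of~$\T$, divisible by primes of congruence of~$f$ to eigenforms~$g$ whose associated abelian variety $A_g$ has positive analytic rank.

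The third step is where the theory of visibility enters. Given an odd prime~$q$ in this factor arising from a congruence $f \equiv g \pmod{\qq}$, and assuming the rank part of Birch--Swinnerton-Dyer (Conjecture~\ref{bsd1}) for~$A_g$ so that $A_g(\Q)$ actually has positive rank, one embeds $A_f^{\vee}$ and $A_g^{\vee}$ into a common modular Jacobian and transports rational points of~$A_g$ into elements of order~$q$ in~$\Sha(A_f)$, up to obstructions at primes of bad reduction which are absorbed by the component group factors $\prod_{p \mid N} c_p(A_f)$. This would yield divisibility of $|\Sha(A_f)| \cdot \prod_{p \mid N} c_p(A_f)$ by~$q$, matching the $q$-divisibility on the left coming from the extracted factor and forcing the two sides of~(\ref{bsdform}) to agree $q$-adically.

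The principal obstacle has two faces. First, the visibility construction only produces Shafarevich-Tate elements from primes~$q$ detectable via congruences inside~$J_0(N)$ itself; any prime contributing to~$|\Sha(A_f)|$ for other reasons is invisible to this method, so equality of $q$-adic valuations for \emph{all} primes~$q$ lies outside the program. Second, even for accessible primes, promoting the one-sided divisibility into the two-sided equality required by~(\ref{bsdform}) demands precise control of torsion, component groups, and dual Mordell-Weil groups; this forces auxiliary hypotheses such as $p^2 \ndiv N$ and $p \ndiv |A_f(\Q)_{\rm tor}|$ to appear, and removing these is the main technical difficulty preventing the paper's divisibility theorems from upgrading into a proof of the full conjecture.
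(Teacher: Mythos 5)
You have correctly recognized that this statement is an open conjecture with no proof in the paper, and your three-step sketch — expressing $\LAf(1)/\OAf$ as a lattice-index ratio via modular symbols (Section~2), extracting Merel's integer factor $\Mid H^+/(H[I_e]^+ + K^+)\miD$ related to congruences with positive-rank eigenforms (Sections~3--5), and invoking visibility theorems \`a la Dummigan--Stein--Watkins to produce the matching divisibility in $\Mid\Sha(A_f)\miD\cdot\prod_p c_p(A_f)$ (Section~6) — accurately reflects the paper's program of partial evidence. The obstacles you identify, namely that visibility at level~$N$ only sees a subset of primes (motivating ``visibility at higher level'' speculation in Remark~\ref{rmk:egs}) and that the technical hypotheses on $N$, torsion, and the Galois representation $A_f[\qq]$ block upgrading one-sided divisibilities to the full equality, are exactly the ones the paper flags.
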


It is known that ${\LAf(1)}/{\OAf}$ is a rational number
(this also follows from Theorem~\ref{thm:lovero} below) and
we call this number the algebraic part of the special
$L$-value of~$A_f$. 
The importance of the second part of
the Birch and Swinnerton-Dyer conjecture is that it gives a conjectural
value of~$\Mid \Sha(A_f) \miD$ in terms of the other quantities
in~(\ref{bsdform}) (which can often be computed). Let us denote this 
conjectural value of~$\Mid \Sha(A_f) \miD$ 
by~$\Mid \Sha(A_f) \miD_{\rm an}$ (where ``an'' stands for ``analytic'').
The theory of
Euler systems has been used to bound~$|\Sha(A_f)|$ 
from above in terms~$\Shan$ %of the value conjectured by~(\ref{bsdform}),
as in the work
of Kolyvagin and of Kato (e.g., see \cite[Thm $8.6$]{rubin:eulerell}).
Also, the Eisenstein series method 
is being used by Skinner-Urban (as yet unpublished)
to try to show that $\Shan$ divides~$|\Sha(A_f)|$.
%the Birch and Swinnerton-Dyer conjectured order of~$\Sha_A$ divides the actual order. 
In both of the methods above,
one may have to stay away from certain primes. 

\comment{
As mentioned earlier, in
this paper we link the theory of visibility to the second part
of the Birch and Swinnerton-Dyer conjecture. This is achieved 
in two related, but different ways, as we now explain.
}
In order to explain our results,
for the sake of simplicity, we assume for the moment that $N$ is prime.
Suppose $q$ is a prime such that $f$ is congruent modulo 
a prime ideal over~$q$ to a newform~$g \in S_2(\Gamma_0(N), \C)$ 
such that~$A_g$ has positive Mordell-Weil rank. 
Then the theory of visibility often implies the existence of
non-trivial elements of~$\Sha(A_f)$ of order~$q$ (for example,
this happens if $q \ndiv N(N-1)$ -- see
the proof of Theorem~\ref{thm:mainprime} below; this
kind of result is well known).  
Now if the second part of the Birch and Swinnerton-Dyer conjecture
is true, then $q$ should also divide~$\Shan$. We show that this is indeed
the case:

\begin{thm} \label{thm:first}
Recall that we are assuming that $N$ is prime. 
Suppose $q$ is an odd prime such that $f$ is congruent modulo 
a prime ideal over~$q$ to a newform~$g \in S_2(\Gamma_0(N), \C)$ 
such that~$A_g$ has positive Mordell-Weil rank. 
Then $q$ divides~$\Shan$.
%Suppose $q$ is an odd prime that does not divide~$N$.
%Then $q$ divides ${\Mid \frac{H^+}{H[I_e]^+ + K^+} \miD}$
%if and only if 
%there is a normalized eigenform~$g \in S_2(\Gamma_0(N), \C)$ such that
%$L(g,1) = 0$, and $f$ is congruent to~$g$ modulo 
%a prime ideal over~$q$
%in the ring of integers
%of the number field generated by the Fourier coefficients
%of~$f$ and~$g$.
\end{thm}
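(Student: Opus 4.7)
Since $N$ is prime, the only component group in the Birch and Swinnerton-Dyer formula is $c_N(A_f)$, so
\[
\Shan = \frac{\big(\LAf(1)/\OAf\big)\cdot |A_f(\Q)|\cdot |A_f^\vee(\Q)|}{c_N(A_f)},
\]
and the task is to show this rational number has positive $q$-adic valuation. The plan is to apply the formula for $\LAf(1)/\OAf$ that the paper establishes later (via Theorem~\ref{thm:lovero}), which writes the ratio as a rational number whose numerator carries a distinguished integer factor $n$---the ``extracted factor''---sensitive to congruences between $f$ and other eigenforms.

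The first step is to show $q \mid n$. The two inputs are the congruence $f \equiv g \pmod{\mathfrak{q}}$ and the vanishing $L(g,1)=0$. The latter follows from the positive Mordell-Weil rank of $A_g$ together with Kato's Euler system bound: if $L(g^\sigma,1) \neq 0$ for some Galois conjugate $g^\sigma$, then the $g^\sigma$-component of the Mordell-Weil group is finite, and since all conjugates of a newform share the same order of vanishing at $s=1$, positivity of the rank of $A_g$ forces $L(g,1)=0$. Given the paper's description of $n$ as a lattice index involving the winding element $e \in H_1(X_0(N),\Z)^+$ modulo $I_f$, the divisibility $q \mid n$ becomes formal: $L(g,1)=0$ places $e$ in $I_g\, H_1(X_0(N),\Z)^+ \otimes \Q$, while $f\equiv g \pmod{\mathfrak{q}}$ gives $I_f + \mathfrak{q}\T = I_g + \mathfrak{q}\T$, and together these produce the extra factor of $q$ in $n$.

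The second step is to check that this $q$ survives rather than being absorbed by $c_N(A_f)$. For prime $N$, by Mazur the rational torsion of $J_0(N)$ is the cuspidal subgroup, cyclic of order $\mathrm{num}((N-1)/12)$, and by Ribet the component group at $N$ is cyclic of the same order; consequently the prime divisors of $|A_f(\Q)|$, $|A_f^\vee(\Q)|$, and $c_N(A_f)$ all lie among the Eisenstein primes of $J_0(N)$. When $q$ is non-Eisenstein these three quantities are $q$-units, and $q \mid n$ yields $q \mid \Shan$ directly. When $q$ is an Eisenstein prime for $f$, a finer argument is required to establish $v_q(n) > v_q(c_N(A_f))$, using Ribet's analysis of the Eisenstein ideal at prime level together with the explicit form of the extracted factor.

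I expect the first step---constructing $n$ inside the $L$-value formula so that congruence primes of the above type visibly divide it---to be the main technical obstacle, since it is the central content of the $L$-value formula (Theorem~\ref{thm:lovero}) that must be developed earlier in the paper; once that formula is in place, the divisibility $q \mid n$ is a formal consequence of $L(g,1)=0$ and the congruence, and the Eisenstein case is a secondary refinement handled by direct comparison with the cyclic component group at~$N$.
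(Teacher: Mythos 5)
Your skeleton is right and the first step (deducing $L(g,1)=0$ from the positive Mordell--Weil rank of $A_g$, and then getting $q$ to divide an integer factor extracted from the numerator of the formula for $\LAf(1)/\OAf$ via the congruence $f\equiv g$) matches what the paper does through Propositions~\ref{prop:ord_to_congr} and~\ref{prop:conr_divs_Lvalue}. But your second step has a genuine gap, and it is one the paper's argument is specifically engineered to avoid. You split into the non-Eisenstein case (where $|A_f(\Q)|$, $|A_f^\vee(\Q)|$, and $c_N(A_f)$ are $q$-units and you win immediately) and the Eisenstein case, which you leave unresolved, saying only that ``a finer argument is required to establish $v_q(n) > v_q(c_N(A_f))$.'' That finer argument is not optional: the theorem asserts divisibility for \emph{every} odd $q$, including Eisenstein ones, so as written the proof is incomplete.

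The paper avoids this dichotomy entirely by a cancellation you do not exploit. The formula of Theorem~\ref{thm:mainform} puts $|\pi_*(\T e)/\pi_*(\Im e)|$ in the \emph{denominator} of $\LAf(1)/\OAf$, and Emerton's theorem (\cite{emerton:optimal}) says that for $N$ prime this lattice index equals $c_N(A_f) = |A_f(\Q)| = |A_f^\vee(\Q)|$ on the nose. Substituting into
\[
\Shan \;=\; \frac{(\LAf(1)/\OAf)\cdot|A_f(\Q)|\cdot|A_f^\vee(\Q)|}{c_N(A_f)}
\]
and into the formula for $\LAf(1)/\OAf$, all four of those quantities cancel exactly, and one is left (up to powers of $2$, via \cite{agst:manin} for the Manin constant) with
\[
\Shan \;=\; \Big|\tfrac{H^+}{H[I_e]^+ + K^+}\Big|\cdot\Big|\tfrac{H[I_e]^+}{\Im e + H[I_e]^+\cap K^+}\Big|,
\]
which is formula~(\ref{primelevelShah}). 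Once you know $q$ (odd) divides the first of these factors, $q\mid\Shan$ is immediate, with no Eisenstein/non-Eisenstein case split and no need for Mazur's and Ribet's cuspidal/component-group identifications as a substitute. So the missing ingredient in your write-up is Emerton's equality, which both identifies the denominator of the $L$-value formula and forces the clean cancellation that makes the Eisenstein case a non-issue. (A very minor point: the paper gets $L(g,1)=0$ from positive rank via Kolyvagin--Logachev \cite{kollog:finiteness} rather than Kato, but this is interchangeable.)
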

The theorem follows from Corollary~\ref{cor:mwrk} below.
Thus, in certain situations, when the theory of visibility
implies that~$\Shaf$ is non-trivial, we show that $\Shan$ is 
non-trivial as well, which
%In light of the discussion above, this theorem 
provides
theoretical evidence towards the second part of the Birch
and Swinnerton-Dyer conjecture based on the theory of visibility.

%Recall that we are assuming that $N$ is prime for simplicity of exposition.
Our other contribution towards the second
part of the Birch and Swinnerton-Dyer conjecture is that we
extract a factor of~$\Shan$ that can be related to~$|\Sha(A_f)|$,
as we now explain.
Let $H=H_1(J_0(N),\Z)$ and let $\pi_*$ denote the map
$H_1(J_0(N),\Q) \ra H_1(A_f,\Q)$ induced by the quotient
map $\pi: J_0(N) \ra A_f$. Let $K$ denote the kernel of~$\pi_*$ restricted to~$H$. 
%: H \ra H_1(A_f,\Z)$.
Let $e \in H_1(J_0(N),\Q)$
denote the winding element (whose definition is recalled below in 
Section~\ref{section:formula})
and let $I_e$ denote the annihilator of~$e$ under
the action of~$\T$.
Let $\Im$ denote the annihilator, under the action of~$\T$,
of the divisor $(0) - (\infty)$,
considered as an element of~$J_0(N)(\C)$. 
There is a complex conjugation involution acting on~$H$,
and if $G$ is a group on which it induces an involution,
then by $G^+$ we mean the subgroup of elements of~$G$ fixed
by the involution. By Lemma~\ref{lem:Ie} below, $\Im e \subseteq H[I_e]^+$.
We show 
(see the discussion preceding
formula~(\ref{primelevelShah}) at the end of Section~\ref{section:extract}):
\begin{eqnarray} \label{primelevelShah2}
\Shan = 
\lm \frac{H^+}{H[I_e]^+ + K^+} \rmid \cdot 
\lm \frac{H[I_e]^+}{\Im  e + H[I_e]^+ \cap K^+} \rmid.
\end{eqnarray}

Our long term goal is to show that $\Shan$ divides~$\Shaf$. 
We have the following partial result towards this:

\begin{thm} \label{thm:main2}
Recall that we are assuming that $N$ is prime.   \\
\noindent (i) An odd prime~$q$ 
divides~$\Mid \frac{H^+}{H[I_e]^+ + K^+} \miD$
if and only if
there is a normalized eigenform~$g \in S_2(\Gamma_0(N), \C)$ such that
$L(g,1) = 0$ and $f$ is congruent to~$g$ 
modulo a prime ideal lying over~$q$ in the ring of integers
of the number field generated by the Fourier coefficients
of~$f$ and~$g$. \\
\noindent (ii) 
Assume that the first part of the Birch and Swinnerton-Dyer conjecture 
holds for all newform quotients of~$J_0(N)$ of positive analytic rank.
Let $q$ be a prime such that $q \ndiv N(N-1)$.
%Assume that for all newforms~$g$ on~$\Gamma_0(N)$,
%if $L(g,1) = 0$, then
%the rank of~$A_g(\Q)$ is positive (which would follow if
%the first part of the Birch and Swinnerton-Dyer conjecture were true).
If $q$ divides the factor $\Mid \frac{H^+}{H[I_e]^+ + K^+} \miD$ 
of~$\Shan$,
then $q^2$ divides $\Mid \Sha(A_f) \miD$.
\end{thm}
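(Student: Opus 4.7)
The plan is to combine part~(i), a visibility argument, and the alternating property of the Cassels-Tate pairing.

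By part~(i), the divisibility of $\Mid H^+/(H[I_e]^+ + K^+) \miD$ by~$q$ produces a newform $g \in S_2(\Gamma_0(N),\C)$ with $L(g,1) = 0$ and a congruence $f \equiv g \pmod{\mathfrak q}$ for some prime~$\mathfrak q$ above~$q$ in the composite coefficient ring. Shimura's theorem that $L(g,1)/\Omega_g^+$ lies in the coefficient field of~$g$ and is Galois-equivariant in the coefficients implies $L(g^\sigma,1) = 0$ for every embedding~$\sigma$, so $L(A_g,s) = \prod_\sigma L(g^\sigma,s)$ vanishes at $s=1$ to order at least $d_g := \dim A_g \geq 1$. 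Applying the assumed first part of the Birch and Swinnerton-Dyer conjecture to~$A_g$ then yields $\rank A_g(\Q) \geq d_g \geq 1$.

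Next I would invoke a visibility argument of the type used in the proof of Theorem~\ref{thm:first}. Since $N$ is prime and $q \ndiv N(N-1)$, Mazur's results on $J_0(N)$ imply that~$\mathfrak q$ is non-Eisenstein, so the residual representation~$\bar\rho_{f,\mathfrak q}$ is irreducible and coincides with~$\bar\rho_{g,\mathfrak q}$; the $\mathfrak q$-torsion of the common intersection $A_f^\vee \cap A_g^\vee \subseteq J_0(N)$ realizes this Galois module. The visibility machinery (following Cremona-Mazur and Agashe-Stein) then produces an injection of a non-zero subgroup of $A_g^\vee(\Q)/qA_g^\vee(\Q)$ into $\Sha(A_f^\vee)[\mathfrak q]$, yielding $q \divs \Mid \Sha(A_f^\vee) \miD = \Shaf$.

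The main obstacle is upgrading this to $q^2 \divs \Shaf$. For this I would use that the theta polarization on~$J_0(N)$ induces a polarization $\lambda: A_f^\vee \to A_f$ coming from a symmetric line bundle, so by the theorem of Flach (as refined by Poonen-Stoll) the Cassels-Tate pairing on the $q$-primary component of~$\Sha(A_f^\vee)$ is non-degenerate and alternating (using that $q$ is odd, and that $\Sha(A_f^\vee)$ is finite by Kolyvagin-Kato since $L(A_f,1) \neq 0$). A finite abelian $q$-group carrying a non-degenerate alternating pairing has square order, so $\Mid \Sha(A_f^\vee)[q^\infty] \miD$ is a perfect square; combined with the previous step this upgrades $q \divs \Shaf$ to $q^2 \divs \Mid \Sha(A_f^\vee) \miD = \Shaf$. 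The most delicate point is confirming that the visibility map genuinely lands in~$\Sha(A_f^\vee)$ (not merely in $H^1(\Q,A_f^\vee)$) and that the polarization-induced pairing is truly alternating at~$\mathfrak q$; the irreducibility of~$\bar\rho_{f,\mathfrak q}$ secured above, together with the hypothesis $q \ndiv N(N-1)$ (which controls component groups and rules out $q$-torsion obstructions at primes of bad reduction), should suffice for both verifications.
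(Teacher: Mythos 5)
Your setup through the step $q \mid \Shaf$ matches the paper in spirit: part~(i) produces the congruent eigenform~$g$ with $L(g,1)=0$; BSD part~1 (together with the Galois-equivariance of algebraic $L$-values, as you note) gives positive Mordell--Weil rank of~$A_g$; and a visibility theorem in the style of Dummigan--Stein--Watkins (the paper invokes their Theorem~6.1) plants $q$-torsion in $\Sha(A_f^\vee)$. The irreducibility of $A_f^\vee[\mathfrak{q}]$ from Mazur's Eisenstein-ideal results (using $q\nmid N(N-1)$) is also the right ingredient. So far so good.

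The gap is in the $q^2$ upgrade. You want to invoke Flach/Poonen--Stoll to conclude that $\Sha(A_f^\vee)[q^\infty]$ has square order via the alternating Cassels--Tate pairing induced by the polarization $\lambda\colon A_f^\vee\to A_f$ coming from the theta divisor on $J_0(N)$. But this polarization is \emph{not} principal, and its degree is essentially the modular number of $A_f$, which is divisible by congruence primes. In the present situation $q$ is by construction a congruence prime for $f$ (that is the content of part~(i)), so $q \mid \deg\lambda$. Flach's squareness result requires a polarization of degree coprime to the relevant prime; with $q \mid \deg\lambda$, the induced pairing on $\Sha(A_f^\vee)[q^\infty]$ is degenerate, and there is no reason for the $q$-part of~$\Sha$ to have square order. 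For higher-dimensional $A_f$ (which the paper explicitly allows), whether $\Sha(A_f)[q^\infty]$ has square order at a congruence prime is simply not known; it would be equivalent to producing a polarization of degree coprime to~$q$, which you have not done. (For elliptic curves the canonical principal polarization saves you, but the theorem is not restricted to $\dim A_f = 1$.)

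The paper avoids this entirely by a different upgrade: since $q$ is odd, the congruence forces $f$ and $g$ to share their Atkin--Lehner eigenvalue and hence their sign in the functional equation; $L(A_f,1)\neq 0$ forces the sign to be~$+1$; so the analytic rank of each $g^\sigma$ is even and, being positive, is $\geq 2$. Feeding BSD part~1 (equivalently the parity conjecture for $A_g$) into DSW Theorem~6.1 gives $q^r \mid \Mid\Sha(A_f^\vee)\miD$ with $r\geq 2$ \emph{directly} from the Selmer-group construction, with no appeal to any pairing on~$\Sha$. You should replace the alternating-pairing step by this parity argument, or at minimum restrict to elliptic-curve $A_f$ and flag that the general case requires it.
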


Part~(i) of the theorem follows from Corollary~\ref{cor:divs} below.
Note that this part shows that the factor~$\Mid \frac{H^+}{H[I_e]^+ + K^+} \miD$
is non-trivial in general (cf. Remark~\ref{rmk:nontriv}).  
Part~(ii) follows from part~(i) using the theory of visibility
(see Theorem~\ref{thm:mainprime} below).
Thus one may say that  we relate a piece of~$\Shan$ to~$\Shaf$
using the theory of visibility.
Apart from 
%Both of the theorems above 
providing additional theoretical evidence
towards the second part of the Birch and Swinnerton-Dyer conjecture, 
part~(ii) of the theorem also shows some consistency between the 
the implications of the two parts of the Birch and Swinnerton-Dyer conjecture.
Also, we remark that part~(i) of the theorem 
proves Theorem~\ref{thm:first} above, in view of the comments
preceding Theorem~\ref{thm:first} and the 
fact that if $A_g$ has positive Mordell-Weil rank,
then it has positive analytic rank by~\cite{kollog:finiteness}
(note that the proof of
Theorem~\ref{thm:main2} does not use Theorem~\ref{thm:first}).

We also have results similar to the ones above
when $N$ is not assumed to be prime, 
as we will explain soon. Before doing that, we pause to comment on
how our result fits into a potential approach towards proving the second
part of the Birch and Swinnerton-Dyer conjecture. The Euler system method gives results of
the type that $\Shaf$ divides~$\Shan$ (at present under certain 
hypotheses, and staying away from certain primes). In view of this,
what one would like to show the reverse divisibility, i.e., that
$\Shan$ divides~$\Shaf$. The Eisenstein ideal method of Skinner-Urban
%(Kolyvagin/McCallum)
works in this direction, as does the theory of visibility.
Our Theorem~\ref{thm:main2} may be seen as a first step in the
approach using visibility to try to show that $\Shan$ divides~$\Shaf$.
We hope to show in future work (using ideas similar 
to those in~\cite{ag:congnum}) that the entire factor
$\Mid \frac{H^+}{H[I_e]^+ + K^+} \miD$  of~$\Shan$
divides~$\Mid \Sha(A_f) \miD$, perhaps staying away from certain primes,
and under certain hypotheses, including the first part of the Birch
and Swinnerton-Dyer conjecture.
One major hurdle that remains is to
relate the other factor~$\Mid \frac{H[I_e]^+}{\Im  e + H[I_e]^+ \cap K^+} \miD$
in~(\ref{primelevelShah2}) to~$\Shaf$.
It is our hope that this factor can be explained by ``visibility
at higher level'' (see Remark~\ref{rmk:egs}(3) below).

\comment{
 of~$\Shan$ (i.e., the ratio
of~$\Shan$ to the factor mentioned above) 
to~$\Mid \Sha(A_f) \miD$ (we do give an explicit formula for this other factor).

%The second way that we link visibility to the second part of
%the Birch and Swinnerton-Dyer conjecture is that we extract from~$\Shan$ an explicit integer
%factor which we show is divisible by an odd prime if and only if
%$f$ is congruent modulo that prime to an eigenform~$g$ with positive
{\em analytic} rank. If we assume the first part of the Birch
and Swinnerton-Dyer conjecture (for all quotients of~$J_0(N)$), 
then $A_g$ has positive Mordell-Weil
rank and under this assumption and the hypothesis that $q \ndiv N(N-1)$, 
we show that $q^2$ divides~$\Mid \Sha(A_f) \miD$.
In future work, we hope to show that the entire factor of~$\Shan$
alluded to above
divides~$\Mid \Sha(A_f) \miD$, perhaps staying away from certain primes,
and under certain hypotheses, including the first part of the Birch
and Swinnerton-Dyer conjecture. This would fit into a program to 
use the theory of visibility to show
that $\Shan$ divides~$\Mid \Sha(A_f) \miD$, again assuming
the first part of the Birch
and Swinnerton-Dyer conjecture. 
The major hurdle that remains is to
relate the other factor of~$\Shan$ (i.e., the ratio
of~$\Shan$ to the factor mentioned above) 
to~$\Mid \Sha(A_f) \miD$ (we do give an explicit formula for this other factor).
Considering that the theory of Euler systems works in the other
direction (trying to show that $\Mid \Sha(A_f) \miD$ divides~$\Shan$),
our discussion above provides a potential approach to proving
the second part of the Birch and Swinnerton-Dyer conjecture. 
}

\comment{
In this paper, we give a formula that expresses the left hand 
side~$\LAf(1)/\OAf$
of the conjectural formula~(\ref{bsdform})
as a rational number. It is easy to see that the denominator in our
formula for~$\LAf(1)/\OAf$
divides the denominator of the right hand side of~(\ref{bsdform}).
As for the numerator in our formula, we extract from it an explicit
integral factor 
%such that if $N$ is square free, then a prime~$p$
%divides this factor if and only if $p$ divides the order of
%the part of~$\ShaA$ visible in~$J_0(N)$.
%\edit{mention that this factor captures the primes that divide
%the visible part of Shah if N is square free, assuming first
%part of Birch and Swinnerton-Dyer}
and use the theory of 
visibility (as in~\cite{cremona-mazur})
to show that if a prime divides
this factor, then it divides the numerator of
the right side of~(\ref{bsdform}),
assuming the first part of the Birch and Swinnerton-Dyer conjecture,
and under some other milder hypotheses.
We now give more details of our results and discuss
the organization of the paper, which in turn indicates the
steps in the proof of our main result.

The organization of this paper is as follows: In 
Section~\ref{section:formula}, we give a formula for
$\LAf(1)/\OAf$ as a rational number, and 
in Section~\ref{section:extract}, we extract 
the integer factor mentioned above
from this formula. 
In Section~\ref{section:fac_to_congr}, we relate
this factor to the order of 
the intersection of certain abelian variety subquotients of~$J_0(N)$.
In Section~\ref{sec:inttocong}, we relate the order of
this intersection
to certain congruences of~$f$ with eigenforms of positive
analytic rank. Finally, 
in Section~\ref{section:visibility}, we use the theory
of visibility to relate these congruences to 
the product of the order of the Shafarevich-Tate and the orders
of the component groups of~$A_f$. 
Throughout this article, the notation
introduced in one section is carried over to the next. 
\comment{
Finally, in 
Section~\ref{section:squareness}, we show that
if the level~$N$ is prime, then under some
reasonable hypotheses the squares of primes of
congruence as above divide the conjectured order of~$\Sha(A_f)$.
}

In the rest of this introductory section, we give some details
about the factor alluded to above.
Let $H=H_1(J_0(N),\Z)$ and let $\pi_*$ denote the map
$H_1(J_0(N),\Q) \ra H_1(A_f,\Q)$ induced by the quotient
map $\pi: J_0(N) \ra A_f$. Let $e \in H_1(J_0(N),\Q)$
denote the winding element (whose definition is recalled below in 
Section~\ref{section:formula})
and let $I_e$ denote the annihilator of~$e$ under
the action of~$\T$.
Let $\Im$ denote the annihilator, under the action of~$\T$,
of the divisor $(0) - (\infty)$,
considered as an element of~$J_0(N)(\C)$. 
Let $K$ denote the kernel of~$\pi_*$ restricted to~$H$. %: H \ra H_1(A_f,\Z)$.
There is a complex conjugation involution acting on~$H$,
and if $G$ is a group on which it induces an involution,
then by $G^+$ we mean the subgroup of elements of~$G$ fixed
by the involution.

Suppose for simplicity that $N$ is square-free.
We prove (Theorem~\ref{thm:mainform} below) 
that up to powers of~$2$, %the left hand side of~(\ref{bsdform}) is
\begin{eqnarray} \label{eqn:loo}
\frac{\LAf(1)}{\OAf} =
\frac{\Mid \frac{H^+}{H[I_e]^+ + K^+} \miD \cdot 
\Mid \frac{H[I_e]^+}{\Im  e + H[I_e]^+ \cap K^+} \miD }
{\Mid \pi_*(\T  e) / \pi_*(\Im  e) \miD}.
\end{eqnarray}
Note that $\pi_*(\T  e) / \pi_*(\Im  e)$ is the subgroup of~$A_f(\Q)$
generated by the image of~$\pi((0)-(\infty))$
(see the proof of Prop.~4.6 of~\cite{agst:bsd}), and hence
${\Mid \pi_*(\T  e) / \pi_*(\Im  e) \miD}$ divides
$\Mid A_f(\Q) \miD$.
Suppose $q$ is an odd prime that divides
the first factor
$\Mid \frac{H^+}{H[I_e]^+ + K^+} \miD$ 
in the numerator of the right side of
formula~(\ref{eqn:loo}).
We show (Proposition~\ref{prop:ord_to_congr}) that 
then there is a normalized eigenform~$g \in S_2(\Gamma_0(N), \C)$ such that
$L(g,1) = 0$, and $f$ is congruent to~$g$ 
modulo a prime ideal lying over~$q$ in the ring of integers
of the number field generated by the Fourier coefficients
of~$f$ and~$g$.
Assume the first part of the 
Birch and Swinnerton-Dyer conjecture (Conjecture~\ref{bsd1})
for all newform quotients;
%for all newforms on $\Gamma_0(M)$ for all $M$ dividing~$N$;
then if $g$ is as above, the rank of~$A_g(\Q)$ is positive.
Under this assumption, using a ``visibility theorem'' of~\cite{dsw}, 
we show that under   
certain other (milder) hypotheses (see Theorem~\ref{thm:mainsqfree}),
%we prove that if an odd prime~$q$ divides
%$\Mid \frac{H^+}{H[I_e]^+ + K^+} \miD$, 
the prime divisor~$q$ of~$\Mid \frac{H^+}{H[I_e]^+ + K^+} \miD$ divides 
$\Mid \Sha(A_f) \miD \cdot \prod_{p\mid N} c_p(A_f)$,
which is as implied
by the Birch and Swinnerton-Dyer conjectural formula~(\ref{bsdform}).
If $N$ is prime, then our results %(Theorem~\ref{thm:mainprime})
are stronger and easier to state: if an odd prime~$q$ divides
$\Mid \frac{H^+}{H[I_e]^+ + K^+} \miD$ and $q \ndiv N(N-1)$, 
then $q^2$ divides $\Mid \Sha(A_f) \miD$
(recall that we are 
assuming the first part of the Birch and Swinnerton-Dyer conjecture).
%and at the same time $q^2$ divides the order of~$\Sha(A_f)$ predicted
%by the Birch and Swinnerton-Dyer conjectural formula~(\ref{bsdform}).
}

%we give our formula for $\LAf(1)/\OAf$ as a rational number.
%It is easy to see that the denominator in our
%formula for~$\LAf(1)/\OAf$
%divides the denominator of the right hand side of~(\ref{bsdform}).
%As for the numerator in our formula, in Section~\ref{section:extract},
%we extract from it
%it an explicit integer factor in Section~\ref{section:extract}, we extract 
%from the numerator of this formula
%In Section~\ref{section:extract}, we extract 
%from the numerator of this formula
%the integer factor mentioned above, which we will now specify. 
\comment{
Let $H=H_1(J_0(N),\Z)$ and let $\pi_*$ denote the map
$H_1(J_0(N),\Q) \ra H_1(A_f,\Q)$ induced by the quotient
map $\pi: J_0(N) \ra A_f$. Let $e \in H_1(J_0(N),\Q)$
denote the winding element (whose definition is recalled below in 
Section~\ref{section:formula})
and let $I_e$ denote the annihilator of~$e$ under
the action of~$\T$.
Let $\Im$ denote the annihilator, under the action of~$\T$,
of the divisor $(0) - (\infty)$,
considered as an element of~$J_0(N)(\C)$. 
There is a complex conjugation involution acting on~$H$,
and if $G$ is a group on which it induces an involution,
then by $G^+$ we mean the subgroup of elements of~$G$ fixed
by the involution.
By~\cite[II.18.6]{mazur:eisenstein}, we have $\Im e \subseteq H[I_e]^+$
(note that in loc. cit., the definition of $\Im$ is different and $N$ 
is assumed to be prime; but the only essential property of $\Im$ that
is used in the proof is that $\Im$ annihilates the divisor $(0) - (\infty)$,
and the assumption on $N$ being prime is not used).
Let $K$ denote the kernel of~$\pi_*$ restricted to~$H$. %: H \ra H_1(A_f,\Z)$.
Suppose for simplicity that $N$ is square-free.
}
%We now give more details about the factor alluded to above,
We now drop the assumption
that $N$ is prime, and describe more general results as well as
%and thus $N$ is just a positive integer,
%unless indicated otherwise.
%We now 
discuss the organization of the paper, which in turn will also indicate the
steps in the proofs of our main results. 
%the latter result mentioned in the previous paragraph.
In Section~\ref{section:formula}, we give a formula that expresses the left hand 
side~$\LAf(1)/\OAf$
of the Birch and Swinnerton-Dyer conjectural formula~(\ref{bsdform})
as a rational number. Using this formula,
in Section~\ref{section:extract}, we show  (Theorem~\ref{thm:mainform} below) that
\begin{eqnarray} \label{eqn:loo}
\frac{\LAf(1)}{\OAf} =
\frac{\Mid \frac{H^+}{H[I_e]^+ + K^+} \miD \cdot 
\Mid \frac{H[I_e]^+}{\Im  e + H[I_e]^+ \cap K^+} \miD }
{\Mid \pi_*(\T  e) / \pi_*(\Im  e) \miD},
\end{eqnarray}
up to powers of~$2$ and powers of primes whose
squares divide~$N$. 
%It is easy to see that 
%$\pi_*(\T  e) / \pi_*(\Im  e)$ is the subgroup of~$A_f(\Q)$
%generated by the image of~$\pi((0)-(\infty))$
%(see the proof of Prop.~4.6 of~\cite{agst:bsd}), and hence
%${\Mid \pi_*(\T  e) / \pi_*(\Im  e) \miD}$ divides
%$\Mid A_f(\Q) \miD$.
Hence the Birch and Swinnerton-Dyer conjectural formula~(\ref{bsdform}) 
becomes: 
\begin{eqnarray} \label{eqn:int}
\frac{\Mid \frac{H^+}{H[I_e]^+ + K^+} \miD \cdot 
\Mid \frac{H[I_e]^+}{\Im  e + H[I_e]^+ \cap K^+} \miD }
{\Mid \pi_*(\T  e) / \pi_*(\Im  e) \miD}
\stackrel{?}{=}
\frac {\Mid \Sha(A_f) \miD \cdot \prod_{\scriptscriptstyle{p |N}}  c_p(A_f)}
      { \Mid A_f(\Q) \miD \cdot \Mid A_f^{\vee}(\Q) \miD },
\end{eqnarray}
up to powers of~$2$ and powers of primes whose
squares divide~$N$.
By Lemma~\ref{lem:divs} below, 
${\Mid \pi_*(\T  e) / \pi_*(\Im  e) \miD}$ divides
$\Mid A_f(\Q) \miD$.
In particular, the second part of the Birch and Swinnerton-Dyer conjecture 
implies  that the first factor
$\Mid \frac{H^+}{H[I_e]^+ + K^+} \miD$ 
in the numerator of the right side of
formula~(\ref{eqn:loo}), 
%which is the factor alluded to in the previous two paragraphs, 
divides ${\Mid \Sha(A_f) \miD \cdot \prod_{\scriptscriptstyle{p |N}}  c_p(A_f)}$,
up to powers of~$2$ and powers of primes whose
squares divide~$N$. Our goal is to prove results towards this implication.
\comment{
In fact, if $N$ is prime, then things simplify significantly,
and the second part of the Birch and Swinnerton-Dyer conjecture just becomes
(see formula~(\ref{primelevelShah}) at the end of Section~\ref{section:extract}):
\begin{eqnarray} \label{primelevelShah2}
\Shan \stackrel{?}{=} 
\lm \frac{H^+}{H[I_e]^+ + K^+} \rmid \cdot 
\lm \frac{H[I_e]^+}{\Im  e + H[I_e]^+ \cap K^+} \rmid.
\end{eqnarray}
}

%that we wish  to link to the order of the Shafarevich-Tate group.
In Section~\ref{section:fac_to_congr}, we relate
the factor~$\Mid \frac{H^+}{H[I_e]^+ + K^+} \miD$ to the order of 
the intersection of certain abelian variety subquotients of~$J_0(N)$.
Then in Section~\ref{sec:inttocong}, we relate the order of
this intersection
to certain congruences of~$f$ with eigenforms of positive
analytic rank. In particular, we show that 
if $q$ is  
an odd prime such that $q^2 \ndiv N$, then $q$
divides~$\Mid \frac{H^+}{H[I_e]^+ + K^+} \miD$
if and only if
there is a normalized eigenform~$g \in S_2(\Gamma_0(N), \C)$ such that
$L(g,1) = 0$ and $f$ is congruent to~$g$ 
modulo a prime ideal lying over~$q$ in the ring of integers
of the number field generated by the Fourier coefficients
of~$f$ and~$g$ (see Corollary~\ref{cor:divs} below).
%Proposition~\ref{prop:ord_to_congr} below).
Thus we obtain the following byproduct
(for details, see Proposition~\ref{prop:conr_divs_Lvalue} and its proof):
\begin{prop} \label{prop:congphil}
Let $q$ be an odd prime such that $q^2 \ndiv N$ and 
$q$ does not divide $\Mid A_f(\Q)_{\rm tor} \miD$.
Suppose that there is a 
%$f$ is congruent to
%modulo a prime ideal over~$q$ to 
normalized eigenform~$g \in S_2(\Gamma_0(N), \C)$ such that 
$L(g,1) = 0$ and $g$ 
is congruent to~$f$
modulo a prime ideal over~$q$
in the ring of integers
of the number field generated by the Fourier coefficients
of~$f$ and~$g$. 
Then $q$ divides $\frac{\LAf(1)}{\OAf}$, and in particular,
$\frac{\LAf(1)}{\OAf} \equiv 
\frac{{{L_{\scriptscriptstyle{A_g}}}}(1)}
{{\Omega_{\scriptscriptstyle{A_g}}}} \bmod q$.
\end{prop}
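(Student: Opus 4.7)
The plan is to derive the proposition directly from the formula~(\ref{eqn:loo}) together with the characterization of when $q$ divides the factor $\bigl|H^+/(H[I_e]^+ + K^+)\bigr|$ in the numerator. The congruence in the conclusion is essentially a formal consequence, because $L(g,1)=0$ forces $L_{A_g}(1)=\prod_\sigma L(g^\sigma,1)=0$, so $L_{A_g}(1)/\Omega_{A_g}=0$; thus the congruence $\tfrac{L_{A_f}(1)}{\Omega_{A_f}}\equiv \tfrac{L_{A_g}(1)}{\Omega_{A_g}}\pmod{q}$ reduces to showing $q\mid \tfrac{L_{A_f}(1)}{\Omega_{A_f}}$.

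First I would invoke Theorem~\ref{thm:mainform} (formula~(\ref{eqn:loo})), which expresses $L_{A_f}(1)/\Omega_{A_f}$ as a rational number whose numerator is the product of the two indices $\bigl|H^+/(H[I_e]^+ + K^+)\bigr|$ and $\bigl|H[I_e]^+/(\Im e + H[I_e]^+ \cap K^+)\bigr|$ and whose denominator is $\bigl|\pi_*(\T e)/\pi_*(\Im e)\bigr|$, the equality holding up to powers of~$2$ and powers of primes whose squares divide~$N$. Because $q$ is odd and $q^2\nmid N$, the ambiguity is invisible at~$q$, so the $q$-adic valuation of the left and right sides agree.

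Next I would apply Corollary~\ref{cor:divs}: since $f\equiv g\pmod{\qq}$ for some prime $\qq$ over~$q$ and $L(g,1)=0$, the prime~$q$ divides the first numerator factor $\bigl|H^+/(H[I_e]^+ + K^+)\bigr|$. This gives a factor of~$q$ in the numerator of the formula. For the denominator, Lemma~\ref{lem:divs} asserts that $\bigl|\pi_*(\T e)/\pi_*(\Im e)\bigr|$ divides $|A_f(\Q)|$; by the standing assumption $L_{A_f}(1)\neq 0$ we have $A_f(\Q)=A_f(\Q)_{\rm tor}$, and the hypothesis $q\nmid |A_f(\Q)_{\rm tor}|$ now shows that $q\nmid \bigl|\pi_*(\T e)/\pi_*(\Im e)\bigr|$. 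Combining these two observations, $q$ divides $L_{A_f}(1)/\Omega_{A_f}$, and the congruence with $L_{A_g}(1)/\Omega_{A_g}=0$ follows immediately.

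There is no real obstacle here: the whole point is that the ingredients have already been assembled in Theorem~\ref{thm:mainform}, Corollary~\ref{cor:divs}, and Lemma~\ref{lem:divs}. The only mild subtlety is bookkeeping with the "up to powers of $2$ and primes with $q^2\mid N$" ambiguity in~(\ref{eqn:loo}), which is ruled out precisely by the hypotheses that $q$ is odd and $q^2\nmid N$; the hypothesis $q\nmid|A_f(\Q)_{\rm tor}|$ is what prevents the denominator from absorbing the factor of~$q$ produced in the numerator.
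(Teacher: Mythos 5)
Your proof is correct and follows essentially the same route as the paper's: the paper's argument (in the proof of Proposition~\ref{prop:conr_divs_Lvalue}) also feeds the congruence hypothesis through Lemmas~\ref{prop:intersect} and~\ref{prop:inter_to_cong} (the content of the relevant direction of Corollary~\ref{cor:divs}) to get $q \mid \lm \frac{H^+}{H[I_e]^+ + K^+} \rmid$, then applies Theorem~\ref{thm:mainform} together with Lemma~\ref{lem:divs}, $q\nmid c_\infty(A_f)$, and $q\nmid\caf$ to isolate the factor of~$q$. The only cosmetic difference is that you invoke the pre-packaged formula~(\ref{eqn:loo}) with its ``up to powers of~$2$ and primes whose squares divide~$N$'' qualifier, whereas the paper uses the sharper form in Theorem~\ref{thm:mainform} and discharges $\caf$ and $c_\infty(A_f)$ explicitly; both are ruled out at~$q$ by the same hypotheses, so the arguments coincide.
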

A result similar to the one above
is proved in~\cite{dsw} using ideas from~\cite{vatsal:canonical}
(our proof is different).
These results fall under the general philosophy that congruences
between eigenforms should lead to congruences between algebraic
parts the special $L$-values (e.g., see~\cite{vatsal:canonical},
and the references therein for more instances). 
%This philosophy can be traced back to~\cite{mazur:arithmetic_values}
%(see~\cite{vatsal:canonical} and the references therein for more instances). 

In Section~\ref{section:visibility}, we use the theory
of visibility to relate congruences  as in Proposition~\ref{prop:congphil} to
the product of the order of the Shafarevich-Tate group and the orders
of the component groups of~$A_f$, as we now explain.
Suppose $q$ and~$g$ are as in Proposition~\ref{prop:congphil}.
Assume the first part of the 
Birch and Swinnerton-Dyer conjecture %(Conjecture~\ref{bsd1})
for all newform quotients;
%for all newforms on $\Gamma_0(M)$ for all $M$ dividing~$N$;
%then if $g$ is an eigenform as above, 
then the rank of~$A_g(\Q)$ is positive 
(considering that $L(g,1) =0$).
%Under this assumption, using 
In this situation, a ``visibility theorem'' of~\cite{dsw} shows 
%we show 
that under   
certain other technical hypotheses,
%we prove that if an odd prime~$q$ divides
%$\Mid \frac{H^+}{H[I_e]^+ + K^+} \miD$, 
the congruence prime~$q$ 
%as above, which divides~$\Mid \frac{H^+}{H[I_e]^+ + K^+} \miD$, 
divides 
$\Mid \Sha(A_f) \miD \cdot \prod_{p\mid N} c_p(A_f)$.
More precisely,  we obtain (see Theorem~\ref{thm:mainsqfree} for details):

\begin{thm} \label{thm:sqfree2}
%Suppose $N$ is square-free, and 
Let $q$ be a prime such that 
$q$ divides $\Mid \frac{H^+}{H[I_e]^+ + K^+} \miD$. \\
Suppose that
$q \ndiv 2 N$, and for all maximal ideals~$\qq$ of~$\T$
with residue characteristic~$q$, $A_f[\qq]$ is irreducible.
Assume that for all newforms~$g$ of level dividing~$N$,
if $L(g,1) = 0$, then the rank of~$A_g(\Q)$ is positive
(this would hold if the first part of the Birch and Swinnerton-Dyer conjecture
is true).
%and $A_g[\qq]$ is irreducible.
Suppose that  
for all primes $p \divs N$, $p \not\equiv - w_p \pmod q$, where $w_p$
is the sign of the Atkin-Lehner involution acting on~$f$, 
and $p \not\equiv -1  \pmod q$ if $p^2 \divs N$. 
Suppose either that $f$
is not congruent modulo a prime ideal over~$q$ to a newform
of lower level (for Fourier coefficients of index coprime to~$Nq$),
or that there is a prime $p$ dividing~$N$ and
a maximal ideal~$\qq$ of~$\T$ with residue characteristic~$q$ 
such that
$f$ is congruent modulo~$\qq$ 
to a newform~$h$ of level dividing $N/p$ 
(for Fourier coefficients of index coprime to~$Nq$), with
$p^2 \ndiv N$, $w_p=-1$, and
$A_h[\qq]$ irreducible. \\
%\\
%\noindent (i) If for all primes $p \divs N$,
%$f$ is not congruent modulo a prime ideal over~$q$ to a newform
%of level dividing $N/p$ (for Fourier coefficients of index coprime to~$Nq$),
%then $q$ divides $\Mid \Sha(A_f) \miD$. \\
%the following statement is false:
%for all primes $p \divs N$, 
%if $f$ is congruent modulo~$\qq$ 
%to a newform of level dividing $N/p$ 
%(for Fourier coefficients of index coprime to~$Nq$), then $w_p=1$.\\
%\noindent (ii) Consider the following condition
%on a maximal ideal~$\qq$ of~$\T$:\\
%\noindent (*) for all primes $p \divs N$, 
%if $f$ is congruent $\bmod \ \qq$ 
%to a newform~$h$ of level dividing $N/p$ 
%(for Fourier coefficients of index coprime to~$Nq$), then either
%$p^2 \divs N$, or $w_p=1$, or
%$A_h[\qq]$ is reducible.\\
%If the statement (*) is false for all maximal ideals~$\qq$ of~$\T$
%with residue characteristic~$q$,
Then $q$ divides $\Mid \Sha(A_f) \miD \cdot 
\prod_{\scriptscriptstyle{p\mid N}} c_p(A_f)$.
\end{thm}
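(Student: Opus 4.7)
The plan is to combine the congruence result from the earlier sections of the paper with the visibility theorem of~\cite{dsw} to transfer the divisibility hypothesis into a divisibility of $\Mid \Sha(A_f) \miD \cdot \prod_{p \mid N} c_p(A_f)$. The first step is to apply Corollary~\ref{cor:divs}: since $q$ is odd and $q^2 \ndiv N$ (a fortiori from $q \ndiv 2N$), this yields a normalized eigenform $g \in S_2(\Gamma_0(N), \C)$ with $L(g,1) = 0$ together with a prime ideal $\qq$ of residue characteristic~$q$ such that $f \equiv g \pmod{\qq}$ on Hecke eigenvalues away from $Nq$. Applying the assumed first part of the Birch and Swinnerton-Dyer conjecture to the newform quotient $A_g$, the vanishing $L(g,1)=0$ forces $\rank A_g(\Q) > 0$, so $A_g$ carries a non-torsion $\Q$-rational point $P$.

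Next, I would feed $P$ into the visibility theorem of~\cite{dsw} applied in the ambient $J_0(N)$ (or, in the alternative branch of the hypothesis, in a $J_0(N/p)$ reached via a suitable degeneracy map). The congruence $f \equiv g \pmod{\qq}$ realizes a common residual Galois representation, and hence inside the ambient Jacobian the subgroup $A_f[\qq]$ embeds Hecke-equivariantly into $A_g$. A Kummer/snake-lemma argument applied to $P$ then produces a non-trivial class in the $q$-Selmer group of $A_f$. The irreducibility hypothesis on $A_f[\qq]$ guarantees that this class is non-trivial and unramified outside $Nq$, while $q \ndiv 2N$ handles the place~$q$ and the archimedean place. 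The local restrictions at primes $p \divs N$ are exactly controlled by component group contributions: the congruence conditions $p \not\equiv -w_p \pmod{q}$, together with $p \not\equiv -1 \pmod q$ in the case $p^2 \divs N$, are the standard conditions under which the local obstruction at~$p$ either vanishes, so the class descends into $\Sha(A_f)$, or is absorbed into the $q$-part of $c_p(A_f)$. The dichotomy on level-lowering is then what selects the correct ambient: in the first branch one works directly in $J_0(N)$, while in the second, the existence of~$h$ of level dividing $N/p$ with $p^2 \ndiv N$, $w_p = -1$, and $A_h[\qq]$ irreducible provides exactly the congruence datum needed to run visibility through a suitable lower-level Jacobian, with the extra $p$-new factor at $p$ accounting for the component group contribution there.

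The main obstacle will be bookkeeping in this visibility step: matching, in detail, the precise hypotheses of the visibility theorem of~\cite{dsw} with those collected in the present statement, verifying that the level-lowering dichotomy cleanly feeds into the correct choice of modular ambient, and confirming that the Atkin-Lehner sign conditions really do suffice to push each local obstruction at $p \divs N$ either into $\Sha(A_f)$ or into the corresponding factor of $c_p(A_f)$. The remaining ingredients --- the existence of the congruence $f \equiv g \pmod{\qq}$, the positivity of $\rank A_g(\Q)$, and the irreducibility of $A_f[\qq]$ --- are supplied by Corollary~\ref{cor:divs}, Conjecture~\ref{bsd1}, and the hypotheses respectively, so once the bookkeeping in the visibility step is in place, the conclusion that $q$ divides $\Mid \Sha(A_f) \miD \cdot \prod_{p \mid N} c_p(A_f)$ follows.
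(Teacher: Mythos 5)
Your proposal tracks the paper's proof quite closely: the paper also starts by invoking Proposition~\ref{prop:ord_to_congr} (of which your Corollary~\ref{cor:divs} is the packaged form) to produce a normalized eigenform $g$ with $L(g,1)=0$ congruent to $f$ modulo a prime over~$q$, then uses the rank hypothesis to get $\rank A_g(\Q)>0$, and then applies the visibility machinery of~\cite{dsw}, which the paper has already packaged as Proposition~\ref{thm:vissqfree}. So the strategy is the same, and the ``bookkeeping'' you defer is exactly the content of that Proposition, which in turn splits into \cite[Thm.~6.1]{dsw} for Case~(i) and Proposition~\ref{prop:cp} for Case~(ii).

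One small inaccuracy worth flagging: in the branch where $f$ is congruent to a newform~$h$ of lower level, you describe the argument as ``running visibility through a suitable lower-level Jacobian,'' but the paper's Proposition~\ref{prop:cp} does not construct a Selmer class at all. Instead it works directly with the Galois representation: the congruence to $h$ of level prime to~$p$ forces $A_f[\qq]$ to be unramified at~$p$, so $H^0(I_p,A_f[\qq])$ is two-dimensional while $H^0(I_p,V_\qq)$ is one-dimensional; a Frobenius-eigenvalue computation (using the Atkin-Lehner sign conditions to split the eigenvalues) then shows the cokernel, which is the $\qq$-part of the component group, is nontrivial. So in Case~(ii) the conclusion $q\mid c_p(A_f)$ comes from a direct component-group calculation rather than from a Selmer/Sha class landing somewhere; the visibility construction is used only in Case~(i). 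This does not affect the correctness of the overall plan, only the mechanism in the second branch.
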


Thus, under certain hypotheses,
%(see Theorem~\ref{thm:mainsqfree} for the precise statement), 
we show that if a prime 
divides~$\Mid \frac{H^+}{H[I_e]^+ + K^+} \miD$, then it
divides 
$\Mid \Sha(A_f) \miD \cdot \prod_{p\mid N} c_p(A_f)$, 
which, as mentioned earlier, is as implied by the second part of
the Birch and Swinnerton-Dyer conjecture. 
Also,
in some sense, one may say that our approach uses visibility to link the first
part of the Birch and Swinnerton-Dyer conjecture to the second.

Note that 
if $N$ is not prime, then the arithmetic component groups
intervene in trying to explain the Shafarevich-Tate group using 
the theory of visibility.
In fact, in Example~\ref{eg:cp} below, 
a prime divides
the factor $\Mid \frac{H^+}{H[I_e]^+ + K^+} \miD$,
but does not divide the Birch and Swinnerton-Dyer
conjectural order of~$\Sha(A_f)$,
and instead divides $\prod_{\scriptscriptstyle{p |N}}  c_p(A_f)$.
%In the theory of Euler systems and the Eisenstein ideal method
%of Skinner-Urban, one usually stays away from primes that 
%divide the orders of the component groups. Our approach, on the other
As opposed to some of the other approaches to the second part 
of the Birch and Swinnerton-Dyer conjecture, our approach
gives information about $\prod_{p\mid N} c_p(A_f)$ vis-a-vis
the conjecture. In fact, our result
seems to indicate that instead of considering 
the quantities $\Mid \Sha(A_f) \miD$ and~$\prod_{p\mid N} c_p(A_f)$ 
separately, one should consider the product
$\Mid \Sha(A_f) \miD \cdot \prod_{p\mid N} c_p(A_f)$ 
in approaches towards the second part of the Birch and Swinnerton-Dyer conjecture 
%(this sentiment is also reflected in Mazur's flat cohomology??).
(at least in the approaches that use the theory of visibility).

We would also like to mention some of our speculations on
how to understand the second part of the Birch and Swinnerton-Dyer conjecture 
based on our formula~(\ref{eqn:loo}) for $\LAf(1)/\OAf$ even
when the level~$N$ is not prime.
Let $C_f$ denote the subgroup of~$A_f(\Q)$
generated by the image of~$\pi((0)-(\infty))$.
By Lemma~\ref{lem:divs}, $\pi_*(\T  e) / \pi_*(\Im  e) = C_f$,
and so ${\Mid \pi_*(\T  e) / \pi_*(\Im  e) \miD} = |C_f|$.
For simplicity, assume that $N$ is square free and that $A_f$
is an elliptic curve. Then, by~(\ref{eqn:int})
and the discussion above,
away from the prime~$2$, 
the second part of the Birch and Swinnerton-Dyer conjecture becomes:
\comment{
\begin{eqnarray} \label{eqn:ec}
%\begin{tabbing}
& &{\Big\Mid \frac{H^+}{H[I_e]^+ + K^+} \Big\miD \cdot 
\Big\Mid \frac{H[I_e]^+}{\Im  e + H[I_e]^+ \cap K^+} \Big\miD } \cdot
\Big( \frac{ \Mid A_f(\Q) \miD}{\Mid \pi_*(\T  e) / \pi_*(\Im  e) \miD} \Big)
\cdot { \Mid A_f(\Q) \miD} \nonumber \\
& \stackrel{?}{=} &
{\Mid \Sha(A_f) \miD \cdot \prod_{\scriptscriptstyle{p |N}}  c_p(A_f)}\ \ .
%\end{tabbing}
\end{eqnarray}
One would like to understand how the various quantities on the left
are related to the quantities on the right. 
As remarked earlier, Lemma~\ref{lem:divs} below shows that  
${\Mid \pi_*(\T  e) / \pi_*(\Im  e) \miD}$ divides
$\Mid A_f(\Q) \miD$, and hence the quantity
$\frac{ \Mid A_f(\Q) \miD}{\Mid \pi_*(\T  e) / \pi_*(\Im  e) \miD}$
on the left side of~(\ref{eqn:ec}) is an integer.
Based on some numerical data and theoretical results, we suspect that 
the product 
$\Big(\frac{ \Mid A_f(\Q) \miD}{\Mid \pi_*(\T  e) / \pi_*(\Im  e) \miD} \Big)
\cdot { \Mid A_f(\Q) \miD}$ divides 
$\prod_{\scriptscriptstyle{p |N}}  c_p(A_f)$ (cf.~\cite{ag:lett}).
For example, when $N$ is prime, 
then by~\cite{emerton:optimal},
$${\Mid \pi_*(\T  e) / \pi_*(\Im  e) \miD} = 
c_{\scriptscriptstyle N}(A_f) 
= \Mid A_f(\Q) \miD = \Mid A_f^{\vee}(\Q) \miD,$$
and the two quantities 
$\Big(\frac{ \Mid A_f(\Q) \miD}{\Mid \pi_*(\T  e) / \pi_*(\Im  e) \miD}
\cdot { \Mid A_f(\Q) \miD} \Big)$ and
$\prod_{\scriptscriptstyle{p |N}}  c_p(A_f)$
are actually equal.
If $N$ is not prime, then we do not expect equality 
(e.g., see Example~\ref{eg:cp} below),
but we expect that the ratio of
$\prod_{\scriptscriptstyle{p |N}}  c_p(A_f)$ to
$\Big( \frac{ \Mid A_f(\Q) \miD}{\Mid \pi_*(\T  e) / \pi_*(\Im  e) \miD}
\cdot { \Mid A_f(\Q) \miD} \Big)$ is explained by congruences with
newforms of lower level having positive analytic rank 
(cf. Proposition~\ref{prop:cp} and Remark~\ref{rmk:cp} below).
Thus we suspect that the product
$\frac{ \Mid A_f(\Q) \miD}{\Mid \pi_*(\T  e) / \pi_*(\Im  e) \miD}
\cdot { \Mid A_f(\Q) \miD}$ on the left side of~(\ref{eqn:ec})
contributes to part of 
$\prod_{\scriptscriptstyle{p |N}}  c_p(A_f)$ in
the product $\Mid \Sha(A_f) \miD \cdot \prod_{p\mid N} c_p(A_f)$,
and the rest of the product 
$\Mid \Sha(A_f) \miD \cdot \prod_{p\mid N} c_p(A_f)$ 
is explained by
${\Mid \frac{H^+}{H[I_e]^+ + K^+} \miD \cdot 
\Mid \frac{H[I_e]^+}{\Im  e + H[I_e]^+ \cap K^+} \miD }$
on the left side of~(\ref{eqn:ec}), via 
congruences with eigenforms of positive analytic rank,
possibly of some higher level (cf. Remark~\ref{rmk:egs}(3) below). 
}

\begin{eqnarray} \label{eqn:ec}
%\begin{tabbing}
& &{\Big\Mid \frac{H^+}{H[I_e]^+ + K^+} \Big\miD \cdot 
\Big\Mid \frac{H[I_e]^+}{\Im  e + H[I_e]^+ \cap K^+} \Big\miD } \cdot
\Big\Mid \frac{A_f(\Q)}{C_f} \Big\miD
%\Big( \frac{ \Mid A_f(\Q) \miD}{\Mid \pi_*(\T  e) / \pi_*(\Im  e) \miD} \Big)
\cdot { \Mid A_f(\Q) \miD} \nonumber \\
& \stackrel{?}{=} &
{\Mid \Sha(A_f) \miD \cdot \prod_{\scriptscriptstyle{p |N}}  c_p(A_f)}\ \ .
%\end{tabbing}
\end{eqnarray}
One would like to understand how the various quantities on the left
are related to the quantities on the right. 
%As remarked earlier, Lemma~\ref{lem:divs} below shows that  
%${\Mid \pi_*(\T  e) / \pi_*(\Im  e) \miD}$ divides
%$\Mid A_f(\Q) \miD$, and hence the quantity
%$\frac{ \Mid A_f(\Q) \miD}{\Mid \pi_*(\T  e) / \pi_*(\Im  e) \miD}$
%on the left side of~(\ref{eqn:ec}) is an integer.
Based on some numerical data and theoretical results, we suspect that 
the product 
$\big( \Mid (A_f(\Q)/C_f) \miD
\cdot { \Mid A_f(\Q) \miD} \big)$ divides 
$\prod_{\scriptscriptstyle{p |N}}  c_p(A_f)$ (cf.~\cite{ag:lett}).
For example, when $N$ is prime, 
then by~\cite{emerton:optimal},
$$\Mid C_f \miD =
c_{\scriptscriptstyle N}(A_f) 
= \Mid A_f(\Q) \miD,$$
and so the two quantities 
$\big( \Mid (A_f(\Q)/C_f) \miD
\cdot { \Mid A_f(\Q) \miD} \big)$ and
$\prod_{\scriptscriptstyle{p |N}}  c_p(A_f)$
are actually equal.
If $N$ is not prime, then we do not expect equality 
(e.g., see Example~\ref{eg:cp} below),
but we expect that the ratio of
$\prod_{\scriptscriptstyle{p |N}}  c_p(A_f)$ to
$\big( \Mid (A_f(\Q)/C_f) \miD
\cdot { \Mid A_f(\Q) \miD} \big)$ is explained by congruences of~$f$
with
newforms of lower level having positive analytic rank 
(cf. Proposition~\ref{prop:cp} and Remark~\ref{rmk:cp} below).
Thus we suspect that the product
$ \Mid (A_f(\Q)/C_f) \miD
\cdot { \Mid A_f(\Q) \miD} $ on the left side of~(\ref{eqn:ec})
contributes to part of 
$\prod_{\scriptscriptstyle{p |N}}  c_p(A_f)$ in
the product $\Mid \Sha(A_f) \miD \cdot \prod_{p\mid N} c_p(A_f)$
on the right side of~(\ref{eqn:ec}),
and the rest of the product 
$\Mid \Sha(A_f) \miD \cdot \prod_{p\mid N} c_p(A_f)$ 
is explained by the product
${\Mid \frac{H^+}{H[I_e]^+ + K^+} \miD \cdot 
\Mid \frac{H[I_e]^+}{\Im  e + H[I_e]^+ \cap K^+} \miD }$
on the left side of~(\ref{eqn:ec}), via 
congruences of~$f$ with eigenforms of positive analytic rank,
possibly of some higher level (cf. Remark~\ref{rmk:egs}(3) below). 

While the picture above is largely speculative at this stage
and may need some extra conditions for it to be true (e.g., one
may have to stay away from certain primes), there 
are several reasons for mentioning it. Firstly, our Theorem~\ref{thm:sqfree2} 
may be seen as a first step in proving results towards our speculations above,
and shows that the theorem
fits into a bigger picture (albeit speculative).
Secondly, as opposed to other approaches to the second part of the Birch and Swinnerton-Dyer conjecture, our approach
predicts more explicitly how non-trivial elements in the various
quantities on the right side
of the second part of the Birch and Swinnerton-Dyer conjectural formula~(\ref{bsdform})
``arise''. We hope that this may lead to a better
understanding of the conjecture, even if it were proven by some
other means. Lastly, we hope that the broader speculative picture
above may help motivate computations or theoretical
results regarding parts of it.

We end our paper with an appendix  in Section~\ref{sec:app}, 
where we prove that for a prime~$p$, 
 the prime-to-$p$ parts of the 
component group at~$p$ used in the Birch and Swinnerton-Dyer conjecture
and the component group at~$p$
used in the formulation of the Bloch-Kato conjecture
are equal. While the result
is well known, to our knowledge, it has not been proven as such
in the literature.

\comment{
If $N$ is prime, then our precise results are easier to state:
%(including the hypotheses alluded to above) is
%result %(Theorem~\ref{thm:mainprime}) is stronger and 
%easier to state: 

\begin{thm} 
Let $N$ be prime. 
Suppose $q$ is a prime such that $f$ is congruent modulo 
a prime ideal over~$q$ to a newform~$g \in S_2(\Gamma_0(N), \C)$ 
such that~$A_g$ has positive Mordell-Weil rank. \\
(i) If $q$ is odd, then the second part of the Birch and
Swinnerton-Dyer conjecture implies that 
$q$ divides~$\Mid \Sha(A_f) \miD$.\\
(ii) If $q \ndiv N(N-1)$, then 
$q$ divides~$\Mid \Sha(A_f) \miD$.
%Suppose $q$ is an odd prime that does not divide~$N$.
%Then $q$ divides ${\Mid \frac{H^+}{H[I_e]^+ + K^+} \miD}$
%if and only if 
%there is a normalized eigenform~$g \in S_2(\Gamma_0(N), \C)$ such that
%$L(g,1) = 0$, and $f$ is congruent to~$g$ modulo 
%a prime ideal over~$q$
%in the ring of integers
%of the number field generated by the Fourier coefficients
%of~$f$ and~$g$.
\end{thm}

The first part of the theorem above follows from
Corollary~\ref{cor:mwrk} and the second part follows from
the proof of Theorem~\ref{thm:mainprime}.
}

%The first part of the theorem above is essentially Proposition~\ref{prop:divs}
%and the second part 
%The theorem above follows from Theorem~\ref{thm:mainprime} below.
%(recall that we are 
%assuming the first part of the Birch and Swinnerton-Dyer conjecture).
%and at the same time $q^2$ divides the order of~$\Sha(A_f)$ predicted
%by the Birch and Swinnerton-Dyer conjectural formula~(\ref{bsdform}).
\comment{
The main hurdle that remains in using the theory of visibility to
understand the second part of the Birch and Swinnerton-Dyer conjecture is to interpret
the factor $\Mid \frac{H[I_e]^+}{\Im  e + H[I_e]^+ \cap K^+} \miD$.
We do not have much to say about this factor, except that
considering that Stein has
conjectured that all of Shah is visible in~$J_0(NM)$ for some~$M$,
there is hope that the factor
$\Mid \frac{H[I_e]^+}{\Im  e + H[I_e]^+ \cap K^+} \miD$
may be interpreted by considerations of ``visibility at a higher level''
(see Remark~\ref{}).
}

We remark that throughout this article, the notation
introduced in one section is carried over to the next. \\

\noindent {\it Acknowledgements:} This paper owes its existence to 
Lo\"ic Merel. It was his idea to extract a factor as above and 
he expected that one can relate the factor
to the order of the Shafarevich-Tate group using 
Mazur's theory of visibility. The author's task was to work out
the details and see how far this idea can be taken.
We also thank Neil Dummigan for several
helpful conversations regarding 
Section~\ref{section:visibility}, and Dipendra Prasad and Minhyong 
Kim for discussions regarding the Appendix. 
We would also like to thank the anonymous referee for 
suggestions that improved the presentation of this paper.
Part of the work was done during visits to 
the Institut des Hautes \'Etudes Scientifiques and
the Tata Institute for Fundamental Research; we are grateful
to both institutions for their kind hospitality.

\section{A formula for the ratio of special $L$-value
to real volume for newform quotients}
\label{section:formula}

The goal of this section is to give a formula (formula~(\ref{formula:lovero}) 
below) 
that expresses~$\LAf(1)/\OAf$, the left hand side of~(\ref{bsdform}), 
as a rational number. 

Let $\mathcal{H}$ denote the complex upper half plane, and
let ${\{0,i\infty\}}$ denote the projection of the geodesic
path from~$0$ to~$i\infty$ in~${\mathcal{H}} \cup {\PP}^1 ({\Q})$
to~$X_0(N)({\C})$.
We have an isomorphism
$$H_1(X_0(N),{\Z}) \tensor {\R} \stackrel{\cong}{\lra}
{\rm Hom}_{{\C}} (H^0(X_0(N), \Omega^1),{\C}),$$ obtained
by integrating differentials along cycles (see~\cite[\S~IV.1]{lang:modular}).
Let $e$ be the element of~$H_1(X_0(N),{\Z}) \tensor {\R}$ that corresponds
to the map $\omega \mapsto - \int_{\{0,i\infty\}} \omega$ under this
isomorphism. It is called the {\em winding element}.

By the Manin-Drinfeld Theorem
(see~\cite[Chap. {I}{V}, Theorem~$2.1$]{lang:modular} 
and~\cite{manin:parabolic}),
\mbox{$e \in H_1(X_0(N),\Z) \tensor \Q$}. Also, since the complex
conjugation involution on $H_1(X_0(N),\Z)$ is induced by the map
$z \mapsto - \overline{z}$ on the complex upper half plane,
we see that $e$ is invariant under complex conjugation.
Thus $\T e \subseteq H_1(X_0(N),\Z)^+ \tensor \Q$.
If the torsion-free group $\pi_*(\T e)$ has rank equal
to~$d$ ($= \dim A_f$), then let 
$[ H_1(A_f,{\Z})^+ : \pi_*(\T e) ]$ denote
the absolute value of the
determinant of an automorphism of~$H_1(A_f,\Q)$ that
takes the lattice $H_1(A_f,{\Z})^+$ isomorphically onto
the lattice $\pi_*(\T e)$; otherwise, define
$[ H_1(A_f,{\Z})^+ : \pi_*(\T e) ]$ to be zero.

Let $g_1, \ldots , g_d$ 
be a $\Z$-basis of $S_2(\Gamma_0(N),\Z)[I_f]$, and for $j = 1, \ldots, d$,
consider $\omega_j = 2 \pi i g_j(z) dz$ as differentials
in~$H^0(A_f,\Omega_{A_f/{\Q}})$.
There exists $c \in \Q^*$ such that 
$D = c \cdot \wedge_j \omega_j$.
As in~\cite{agst:manin}, we call the absolute value of~$c$ 
the Manin constant of~$A_f$,
and denote it by $\caf$.
Let $c_\infty(A)$ denote the number of connected components of~$A({\R})$.

The following theorem is similar to Theorem~4.5 in~\cite{agst:bsd}
and the key idea behind the proof goes back to~\cite{agashe:invis}.
%In fact, the proof below provides some details of the proof
%of Thm.~4.5 in~\cite{agst:bsd}.

\begin{thm} \label{thm:lovero}
\begin{eqnarray} \label{formula:lovero}
\frac{\LAf(1)}{\OAf} = 
\frac{\left[ H_1(A_f,{\Z})^+ : \pi_*(\T e) \right]}
     {\caf \cdot c_\infty(A_f)}. 
\end{eqnarray}
\end{thm}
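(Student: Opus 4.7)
The plan is to evaluate both $\LAf(1)$ and $\OAf$ as determinants built from a single period matrix, and to identify their ratio with the index on the right-hand side. The degenerate case $\LAf(1)=0$ is handled separately: since Shimura's rationality makes $L(f,1)/\Omega_f^+$ an element of $K_f$, the vanishing of $\LAf(1)=\prod_\sigma L(\sigma f,1)$ forces every $L(\sigma f,1)=0$, so every period $\int_{\pi_*(e)}\omega_j=L(g_j,1)$ vanishes, $\pi_*(e)=0$ in $H_1(A_f,\R)^+$, and $\pi_*(\T e)$ has rank less than $d$, so both sides are zero by convention; from now on assume $\LAf(1)\neq 0$.

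Fix a $\Z$-basis $\gamma_1,\ldots,\gamma_d$ of $H_1(A_f,\Z)^+$ and set $\Pi^+=(\int_{\gamma_i}\omega_j)_{i,j}$. Via the uniformization $A_f(\C)\isom H^0(A_f,\Omega^1)^\vee/H_1(A_f,\Z)$, the connected component $A_f(\R)^0$ is the real torus $H_1(A_f,\R)^+/H_1(A_f,\Z)^+$; since $D=\pm\caf\cdot\wedge_j\omega_j$, the volume of $A_f(\R)^0$ against $|D|$ equals $\caf\cdot|\det\Pi^+|$, and hence $\OAf=c_\infty(A_f)\cdot\caf\cdot|\det\Pi^+|$. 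Next fix a $\Z$-basis $T_1,\ldots,T_d$ of the $\Z$-order $\T/\If$ in $K_f$, so $T_1 e,\ldots,T_d e$ is a $\Q$-basis of $\T e\subset H_1(X_0(N),\Q)^+$; writing $\pi_*(T_i e)=\sum_k M_{ik}\gamma_k$, we get $[H_1(A_f,\Z)^+:\pi_*(\T e)]=|\det M|$.

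The crux is to evaluate $\int_{\pi_*(T_i e)}\omega_j$ in two ways. On the homological side, $\int_{\pi_*(T_i e)}\omega_j=(M\Pi^+)_{ij}$. Alternatively, using $\pi^*\omega_j=\omega_j$ on $J_0(N)$ and the Hecke-equivariance of the period pairing,
\[
\int_{\pi_*(T_i e)}\omega_j=\int_{T_i e}\omega_j=\int_e 2\pi i(T_i g_j)(z)\,dz=L(T_i g_j,1),
\]
via the defining property of the winding element and the identity $L(h,1)=-2\pi i\int_0^{i\infty}h(z)\,dz$. Decomposing $S_2(\Gamma_0(N),\C)[\If]=\bigoplus_\sigma\C\cdot\sigma f$ over embeddings $\sigma\colon K_f\hookrightarrow\C$ and writing $g_j=\sum_\sigma c_{j\sigma}\,\sigma f$, we obtain $L(T_i g_j,1)=\sum_\sigma\sigma(T_i)c_{j\sigma}L(\sigma f,1)$, which factors as the matrix product $A\,\Delta\,C^t$, with $A_{i\sigma}=\sigma(T_i)$, $C_{j\sigma}=c_{j\sigma}$, and $\Delta=\mathrm{diag}(L(\sigma f,1))_\sigma$. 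Since $a_1(\sigma f)=1$, the matrix of the Hecke pairing $(T,g)\mapsto a_1(Tg)$ in our chosen bases is exactly $AC^t$, whose determinant is $\pm 1$ by the $\Z$-perfectness of this pairing on $(\T/\If)\times S_2(\Gamma_0(N),\Z)[\If]$. Hence $|\det(L(T_i g_j,1))_{i,j}|=\LAf(1)$.

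Combining the two evaluations yields $|\det M|\cdot|\det\Pi^+|=\LAf(1)$, and dividing by $\OAf=c_\infty(A_f)\caf|\det\Pi^+|$ gives the claimed formula. The main obstacle is establishing the $\Z$-perfectness of the Hecke pairing on the $f$-isotypic piece: this follows because $S_2(\Gamma_0(N),\Z)[\If]$ is saturated in the free $\Z$-module $S_2(\Gamma_0(N),\Z)$, hence a direct $\Z$-summand; its annihilator under the perfect pairing $\T\times S_2(\Gamma_0(N),\Z)\to\Z$ is exactly $\If$ (by the eigenform characterization of $\If$); and a perfect pairing restricts to a perfect pairing on a saturated summand and its dual quotient. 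A secondary technical point is the Hecke-equivariance for the operators $U_p$ with $p\divs N$, but this is unambiguous once one uses the modular-symbol description of $e$ and the compatible $\T$-action on modular symbols.
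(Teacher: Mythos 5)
Your proposal is correct and follows essentially the same strategy as the paper's proof: you express $\OAf$ as $c_\infty(A_f)\cdot\caf\cdot|\det\Pi^+|$ (the paper's Lemma 2.4), factor $\LAf(1)=\prod_\sigma L(\sigma f,1)$, and then show the change-of-basis determinant is controlled by the $\Z$-perfectness of the Hecke pairing on $\T/\If\times S_2(\Gamma_0(N),\Z)[\If]$ (the paper's Lemma 2.2). The only cosmetic difference is that you organize the determinant computation as the explicit factorization $A\Delta C^t$ with $\det(AC^t)=\pm1$, whereas the paper phrases it as a discriminant of a pairing evaluated on dual bases; these are the same calculation.
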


The rest of this section is devoted to proving the theorem above,
and apart from Lemma~\ref{perfpair4}, none of the discussion
in the rest of this section will be used later.

We start by giving some lemmas that will be used in the proof
of the theorem.
Let $S_f = S_2(\Gamma_0(N),\Z)[I_f]$\label{sym:sf}. 
There is a perfect pairing 
\begin{eqnarray}\label{perfpair3}
\T \times S_2(\Gamma_0(N),\Z) \ra \Z
\end{eqnarray}
which associates to~$(T,f)$ the first Fourier coefficient~$a_1(f \divs T)$ of 
the modular form~$f \divs T$ (see \cite[(2.2)]{ribet:modp});
this induces a pairing
$$\psi: \T/I_f \times S_f \ra \Z.$$
 
\begin{lem}\label{perfpair4}
The pairing~$\psi$ above is a perfect pairing.
\end{lem}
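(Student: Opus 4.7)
The plan is to deduce the perfectness of $\psi$ from the perfect pairing~(\ref{perfpair3}) by identifying $S_f$ as the orthogonal complement of~$I_f$. First I would check that the pairing really descends: if $t \in I_f$ and $g \in S_f$, then by definition of~$S_f$ we have $g\mid t = 0$, so $a_1(g \mid t) = 0$, and hence the original pairing factors through $\T/I_f \times S_f \to \Z$. All four abelian groups in sight ($\T$, $I_f$, $\T/I_f$, $S_2(\Gamma_0(N),\Z)$) are finitely generated and torsion free, hence free of finite rank (for $\T/I_f$ this uses that it embeds in the order generated by the Fourier coefficients of~$f$).

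The main step is to show that the map
\[
  S_f \lra \Hom(\T/I_f, \Z)
\]
induced by~$\psi$ is an isomorphism. Applying $\Hom(-,\Z)$ to the exact sequence $0 \to I_f \to \T \to \T/I_f \to 0$ of free $\Z$-modules yields an identification of $\Hom(\T/I_f,\Z)$ with the orthogonal complement of~$I_f$ inside $\Hom(\T,\Z) \cong S_2(\Gamma_0(N),\Z)$ under the perfect pairing~(\ref{perfpair3}). So I need to identify this orthogonal complement with~$S_f$. The inclusion $S_f \subseteq I_f^\perp$ is immediate from the descent check above. For the reverse inclusion, suppose $g \in S_2(\Gamma_0(N),\Z)$ satisfies $a_1(g \mid t) = 0$ for every $t \in I_f$. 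For any such~$t$ and any $T \in \T$, the element $tT$ again lies in~$I_f$ (as $I_f$ is an ideal), hence $a_1((g \mid t) \mid T) = a_1(g \mid (tT)) = 0$; so $g \mid t$ pairs trivially with all of~$\T$, and the perfectness of~(\ref{perfpair3}) forces $g \mid t = 0$. Therefore $g \in S_f$, giving $I_f^\perp = S_f$.

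Combining these identifications yields $S_f \isom \Hom(\T/I_f, \Z)$. To finish, I observe that $S_f$ and $\T/I_f$ are both free $\Z$-modules of rank~$d = \dim A_f$ (the first as a saturated subgroup of~$S_2(\Gamma_0(N),\Z)$ cut out by the action of the Hecke algebra, the second by the dimension count for~$A_f$). Hence the dual isomorphism $\Hom(S_f,\Z) \isom \Hom(\Hom(\T/I_f,\Z),\Z) \isom \T/I_f$ is also induced by~$\psi$ (it is the reflexivity isomorphism for the free module~$\T/I_f$), which shows the remaining half of perfectness.

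The argument is largely formal once one has the correct characterization of~$S_f$; the only real content is the orthogonal-complement calculation in the second paragraph, and I expect no essential obstacle there since it reduces directly to the perfectness of the original pairing~(\ref{perfpair3}).
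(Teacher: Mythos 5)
Your proof is correct but takes a genuinely different route from the paper's. The paper observes that $\T/I_f$ and $S_f$ are free $\Z$-modules of the same rank and then verifies that the two maps induced by~$\psi$, namely $S_f \to \Hom(\T/I_f,\Z)$ and $\T/I_f \to \Hom(S_f,\Z)$, are injective --- the first from perfectness of~(\ref{perfpair3}), the second via the eigenform computation $0 = a_1(f\mid T) = \lambda a_1(f) = \lambda$. You instead characterize $S_f$ as the exact orthogonal complement $I_f^{\perp}$ of~$I_f$ inside $S_2(\Gamma_0(N),\Z)\cong\Hom(\T,\Z)$; the reverse inclusion $I_f^{\perp}\subseteq S_f$ is the one real computation, and your use of the ideal property of~$I_f$ to promote ``$a_1(g\mid t)=0$ for all $t\in I_f$'' to ``$g\mid t=0$'' by testing against all of~$\T$ is exactly right. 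You then read off $\Hom(\T/I_f,\Z)\cong I_f^{\perp} = S_f$ from the $\Hom(-,\Z)$ sequence of $0\to I_f\to\T\to\T/I_f\to 0$ and invoke reflexivity for the other half. One thing your route buys is that it makes explicit the surjectivity that perfectness requires: injectivity of both induced maps between free modules of equal rank only gives nondegeneracy after tensoring with~$\Q$, not unimodularity (witness $(a,b)\mapsto 2ab$ on $\Z\times\Z$). The paper's rank-plus-injectivity argument implicitly relies on $S_f$ being saturated in $S_2(\Gamma_0(N),\Z)$, which forces $\T/I_f\to\Hom(S_f,\Z)$ to be onto; your orthogonal-complement identification proves the corresponding surjectivity directly and is in that sense the more self-contained argument.
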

\begin{proof}
Both $\T/I_f$ and $S_f$ are free \mbox{$\Z$-modules}
of the same rank.
%; moreover
%by Lemma~\ref{tdecomp} and Part~3 of Remark~\ref{dimstensorq},
%they have the same rank. 
So it suffices to prove that the induced
maps $S_f \ra \Hom(\T/I_f, \Z)$ and $\T/I_f \ra \Hom(S_f, \Z)$
are injective. The injectivity of the first map follows
from the perfectness of the pairing~(\ref{perfpair3}).
Suppose the image of $T \in \T$ in $\T/I_f$
maps to the trivial element of $\Hom(S_f, \Z)$.
Then $a_1(f \divs T) =0$. But $f$ is an eigenform for~$T$; suppose
the eigenvalue is~$\lambda$. Then $0 = a_1(f \divs T) = \lambda a_1(f)
= \lambda$. Thus $f \divs T = 0$, i.e., $T \in I_f$.
Thus the map
$\T/I_f \ra \Hom(S_f, \Z)$ is injective and we are done.
\end{proof}

\begin{lem} \label{te}
The map $\T \ra \T e$ given by $t \mapsto te$
induces an isomorphism $\T/I_f \stackrel{\cong}{\lra}\T e/ \If e$. 
\end{lem}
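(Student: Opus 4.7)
The plan is to read off the kernel of the induced map and reduce the lemma to a single inclusion of ideals. The map $\T \to \T e$, $t \mapsto te$, is $\T$-linear and surjective with kernel $I_e := \Ann_\T(e)$, and since $I_f e$ is automatically sent into $I_f e$ it descends to a surjection $\T/I_f \twoheadrightarrow \T e / I_f e$. A direct calculation identifies its kernel: $te \in I_f e$ if and only if $te = se$ for some $s \in I_f$, equivalently $t - s \in I_e$, equivalently $t \in I_e + I_f$. So the kernel equals $(I_e + I_f)/I_f$, and the lemma reduces to establishing the inclusion $I_e \subseteq I_f$.

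To prove this inclusion I would push $e$ down to $A_f$ and compute the annihilator of $\pi_*(e) \in H_1(A_f,\Q)^+$. Because $\pi^*$ carries the differential $\omega_f$ on $A_f$ to the standard holomorphic differential $2\pi i f(z)\,dz$ on $X_0(N)$, the pairing $\int_{\pi_*(e)}\omega_f = \int_e \omega_f$ equals $L(f,1)$ up to a nonzero normalization. The factorization $\LAf(s) = \prod_\sigma L(\sigma f, s)$ over Galois conjugates of $f$, combined with the standing hypothesis $\LAf(1) \neq 0$, forces $L(f,1) \neq 0$, so $\pi_*(e) \neq 0$.

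Since $I_f$ kills $A_f$ by construction, $\T$ acts on $H_1(A_f,\Q)$ through the quotient $\T/I_f$, whose rational version $(\T/I_f)\otimes\Q$ is the coefficient field $K_f$ of degree $d = \dim A_f$. Standard structure theory for newform quotients shows $H_1(A_f,\Q)$ is free of rank $2$ over $K_f$, so $H_1(A_f,\Q)^+$ is free of rank $1$, and the nonzero element $\pi_*(e)$ has trivial $K_f$-annihilator. Torsion-freeness of $\T/I_f$ (which acts faithfully on $S_f$) then upgrades this to $\Ann_\T(\pi_*(e)) = I_f$ integrally, and $\T$-equivariance of $\pi_*$ gives $I_e \subseteq \Ann_\T(\pi_*(e)) = I_f$, completing the reduction.

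The only non-formal ingredient is the rank-two freeness of $H_1(A_f,\Q)$ over $K_f$ (together with the normalization $\int_e \omega_f = L(f,1)$); both are standard for the Shimura newform quotient, where $\End^0(A_f) = K_f$, so I do not anticipate a serious obstacle beyond citing these facts.
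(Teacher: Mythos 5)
Your reduction of the lemma to the single inclusion $I_e \subseteq I_f$ is exactly the right step, and both your argument and the paper's use $L(f,1) \neq 0$ as the essential non-formal input. From there, however, the routes diverge. The paper's proof is a short, direct pairing computation: given $s \in \T$ with $se = 0$, one has $0 = \int_{se}\omega_f = \int_e \omega_{sf} = \lambda \int_e \omega_f = \lambda\, L(f,1)$ (where $sf = \lambda f$ since $f$ is an eigenform), so $\lambda = 0$ and $s \in I_f$. This never leaves $J_0(N)$ and needs no structure theory beyond the eigenform property of $f$. Your argument instead pushes $e$ forward to $A_f$ and appeals to the rank-two freeness of $H_1(A_f,\Q)$ over the Hecke field $K_f$ (hence rank one for the $+$ eigenspace) to conclude that the nonzero class $\pi_*(e)$ has annihilator exactly $I_f$. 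This is correct but brings in genuinely more machinery: it requires knowing $\End^0(A_f) \cong K_f$ and the $K_f$-module structure of $H_1(A_f,\Q)$, facts the paper doesn't need at this point. What your version buys is a cleaner conceptual framing -- $I_e \subseteq I_f$ becomes the statement that the annihilator of $e$ is contained in the annihilator of its nonzero image in a free rank-one module -- and it makes visible \emph{why} the non-vanishing of $L(f,1)$ matters (it is precisely the non-vanishing of $\pi_*(e)$). The paper's version is the more economical proof and is self-contained; yours is a reasonable alternative for a reader who already has the structure of $H_1(A_f,\Q)$ over $K_f$ in hand.
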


\begin{proof}
It is clear that the map $\T \ra \T e/ \If e$ given by $t \mapsto te$
is surjective. All we have to show is that
the kernel of this map is~$I_f$.
It is clear that the kernel contains~$I_f$. Conversely, 
if $t$ is in the kernel, then $te \in \If e$; let
$i \in I_f$ be such that $te = ie$. Then $(t-i)e=0$,
and thus $\int_{(t-i)e} \omega_f = 0$, i.e., 
$\int_e \omega_{(t-i)f} =0$. If the eigenvalue
of~$f$ under~$(t-i)$ is~$\lambda$, then this means
$\lambda \cdot L(f,1) = 0$, i.e., $\lambda = 0$. Thus
$(t-i) \in I_f$, i.e., $t \in I_f$.
%Thus we have $\T/I_f \cong \T e/\If e$ under the map $t \mapsto te$.
\end{proof}

We have a pairing
\begin{eqnarray} \label{eqn:pairing}
H_1(X_0(N),\Z) \tensor {\C} \times  S_2(\Gamma_0(N), {\C}) \ra {\C}
\end{eqnarray}
given by~$(\gamma,f)\mapsto \langle \gamma,f\rangle  = \int_\gamma \omega_f$
and extended $\C$-linearly.
At various points below in this section, we will consider pairings 
between two $\Z$-modules; unless otherwise stated,
each such pairing is obtained in a natural
way from~(\ref{eqn:pairing}).
If $\langle\ ,\ \rangle : M \times M' \ra \C$, 
is a pairing between two $\Z$-modules~$M$ and~$M'$, each
of the same rank~$m$, and 
$\{\alpha_1, \ldots, \alpha_m \}$
and $\{ \beta_1, \ldots, \beta_m\}$ are 
bases of~$M$ and~$M'$ (respectively),
then by~${\rm disc}(M \times M' \ra \C)$, we mean
the absolute value of~$\det (\langle \alpha_i, \beta_j\rangle )$;
this value is independent of the choices of bases made in its definition.

\begin{lem}\label{lem:OAf}
$\OAf = \caf \cdot c_\infty(A_f) \cdot
{\rm disc}(H_1(A_f,\Z)^+ \times S_f \ra {\C})$.
\end{lem}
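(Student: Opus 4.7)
The plan is to unwind the definition of $\OAf$, identify the identity component $A_f(\R)^0$ with a quotient of a real vector space by the lattice $H_1(A_f,\Z)^+$, and then recognize the resulting period determinant as the discriminant in the statement.

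First, by definition $\OAf=\int_{A_f(\R)}|D|$, and since $D$ is translation-invariant every connected component of $A_f(\R)$ has the same $|D|$-volume. Writing $D=c\cdot\omega_1\wedge\cdots\wedge\omega_d$ with $|c|=\caf$ yields
\[\OAf \;=\; c_\infty(A_f)\cdot\int_{A_f(\R)^0}|D| \;=\; \caf\cdot c_\infty(A_f)\cdot\int_{A_f(\R)^0}|\omega_1\wedge\cdots\wedge\omega_d|,\]
so it suffices to identify the remaining integral with $\mathrm{disc}(H_1(A_f,\Z)^+\times S_f\ra\C)$.

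Next, I would use the complex uniformization $A_f(\C)=V/H_1(A_f,\Z)$ with $V=\mathrm{Lie}(A_f(\C))$, together with the action of complex conjugation $\sigma$ on both factors. The long exact sequence of $\sigma$-cohomology for $0\ra H_1(A_f,\Z)\ra V\ra A_f(\C)\ra 0$, combined with $H^1(\sigma,V)=0$, gives a natural isomorphism $V^\sigma/H_1(A_f,\Z)^+\cong A_f(\R)^0$ of real Lie groups; all contributions to $\pi_0(A_f(\R))$ come from $H^1(\sigma,H_1(A_f,\Z))$ and are already factored into $c_\infty(A_f)$. Choosing a $\Z$-basis $\gamma_1,\ldots,\gamma_d$ of $H_1(A_f,\Z)^+$, the standard computation of the volume of a fundamental parallelepiped for a translation-invariant top form yields
\[\int_{A_f(\R)^0}|\omega_1\wedge\cdots\wedge\omega_d| \;=\; \bigl|\det\bigl(\langle\gamma_i,g_j\rangle\bigr)\bigr|,\]
where each matrix entry is $\langle\gamma_i,g_j\rangle=\int_{\gamma_i}\omega_j$ via $\omega_j=2\pi i\,g_j(z)\,dz$ and the pairing~(\ref{eqn:pairing}). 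The right-hand side is precisely $\mathrm{disc}(H_1(A_f,\Z)^+\times S_f\ra\C)$ by definition, which when combined with the first display proves the lemma.

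The main technical point is the identification $A_f(\R)^0=V^\sigma/H_1(A_f,\Z)^+$ on the nose, with no hidden finite-index correction that would spoil the discriminant formula; this is exactly what the vanishing of $H^1(\sigma,V)$ delivers, and it is the only nontrivial ingredient beyond bookkeeping for translation-invariant forms on a real torus.
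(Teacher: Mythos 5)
Your proof is correct and follows essentially the same route as the paper: pull out $c_\infty(A_f)$ by translation invariance, pull out $\caf$ from $D = c\cdot\wedge_j\omega_j$, and then identify the remaining integral over $A_f(\R)^0$ with the discriminant of the pairing $H_1(A_f,\Z)^+\times S_f\to\C$. The one difference is cosmetic: the paper simply cites the canonical isomorphism $H_1(A_f(\R)^0,\Z)\cong H_1(A_f,\Z)^+$ (to \cite[Lemma~4.4]{agst:bsd}), whereas you re-derive $A_f(\R)^0\cong V^\sigma/H_1(A_f,\Z)^+$ directly from the long exact sequence of $\sigma$-cohomology for $0\to H_1(A_f,\Z)\to V\to A_f(\C)\to 0$ using $H^1(\sigma,V)=0$; this also transparently identifies $\pi_0(A_f(\R))$ with $H^1(\sigma,H_1(A_f,\Z))$, which is consistent with the factor $c_\infty(A_f)$. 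Self-contained but substantively the same argument.
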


\begin{proof}
Let $A(\R)^0$ denote the component of~$A(\R)$ containing
the identity. Then
\begin{tabbing}
$\OAf$ \= $= \int_{A(\R)} D = c_\infty(A_f) \cdot \int_{A(\R)^0} D
= c_\infty(A_f) \cdot \caf \cdot \int_{A(\R)^0} \wedge_j \omega_j$ \\
\> $= c_\infty(A_f) \cdot \caf \cdot {\rm disc}(H_1(A(\R)^0,\Z) \times S_f \ra {\C})$
\\
\> $= c_\infty(A_f) \cdot \caf \cdot {\rm disc}(H_1(A_f,\Z)^+ \times S_f \ra {\C})$,
\end{tabbing}
where the last equality follows from the canonical isomorphism
$H_1(A(\R)^0,\Z) \isom H_1(A_f,\Z)^+$ (see, e.g., \cite[Lemma~4.4]{agst:bsd};
note that several of the $A(\R)$'s in Section~4.2 of~\cite{agst:bsd}
should really be $A(\R)^0$'s).
\end{proof}

\begin{proof}[Proof of Theorem~\ref{thm:lovero}]
Let $\{f_i\}$, for $i = 1, 2, \ldots d$, denote the set of
Galois conjugates of~$f$.
By~\cite[Thm.~7.14]{shimura:intro}
and~\cite{carayol:hilbert}, $L_{A_f}(s) = \prod_i L(f_i,s) 
= \prod_i \langle e,f_i\rangle$. 
Hence, using Lemma~\ref{lem:OAf},

\begin{eqnarray*}
\caf \cdot c_\infty(A_f) \cdot \frac{\LAf(1)}{\OAf}  
& = &\frac {\prod_i \langle e,f_i\rangle }{{\rm disc}(H_1(A_f,\Z)^+ \times S_f \ra {\C})} \\
& = &\frac {\prod_i \langle e,f_i\rangle }{{\rm disc}(\T e/\If e \times S_f \ra {\C})} \cdot 
\left[ H_1(A_f,{\Z})^+ : \pi_*({\T}e) \right].
\end{eqnarray*}

To prove the theorem, it suffices to prove that
\begin{eqnarray} \label{keyclaim}
\frac {\prod_i \langle e,f_i\rangle }
{{\rm disc}(\T e/\If e \times S_f \ra {\C})} = 1.
\end{eqnarray}

%There is a perfect pairing ${\T}/I_f \times S_f \ra {\Z}$, which 
%associates to~$(t,g)$ the first Fourier coefficient~$a_1(tg)$ 
%of the modular form~$tg$ (this can be deduced from
%\cite[(2.2)]{ribet:modp}). 
In what follows,
$i, j, k, {\rm and\ } \ell$ are indices running from~$1$ to~$d$.
Let $\{g_k\}$ be a~$\Z$-basis of~$S_f$ and let $\{t_j\}$ be the 
corresponding dual basis
of~${\T}/I_f$ under the perfect pairing~$\psi$ in Lemma~\ref{perfpair4} above. 
Then by Lemma~\ref{te}, $\{t_j e\}$ is a basis for~${\T} e / \If e$.
Now $g_k = \sum_k a_{ki} f_i$ for some $\{a_{ki} \in \C\}$. Let
$A$ be the matrix having $(k,i)$-th entry~$a_{ki}$,
and let $(a^{-1})_{i\ell}$
denote the $(i,\ell)$-th element of the inverse of~$A$. Then 

\begin{tabbing}
\= ${\rm disc} ({\T} e / \If e \times S_f \ra {\C})$ \\
\> $ = \det \{\langle t_j e,g_k\rangle \}= \det \{\langle e,g_k \divs t_j\rangle \} 
   = \det \{\langle e,(\sum_i a_{ki} f_i)\divs t_j\rangle \}$\\ 
\> $= \det \{\langle e,\sum_i a_{ki} a_1(f_i \divs t_j) f_i\rangle \}$ 
   \ \ \ \ (since $f_i$'s are eigenvectors) \\
\> $= \det \{\langle e,\sum_i a_{ki} \sum_\ell (a^{-1})_{i\ell} a_1(g_\ell \divs t_j) f_i\rangle \}$
   \ \ (using $f_i= \sum_\ell (a^{-1})_{i\ell} g_\ell$)\\
\> $= \det \{\langle e,\sum_i a_{ki} (a^{-1})_{ij} f_i\rangle \}$
   \ \ \ \ (using $a_1(g_\ell \divs t_j) = \delta_{\ell j}$)\\
\> $= \det \{\sum_i a_{ki} (a^{-1})_{ij} \langle e,f_i\rangle \} 
= \det \{\sum_i a_{ki} \langle e,f_i\rangle  (a^{-1})_{ij} \}$\\ 
\> $= \det(A \Delta A^{-1})$
  \ \ \ \  (where $\Delta = {\rm diag}(\langle e,f_i\rangle )$)\\
\> $= \det (\Delta) = \prod_i \langle e,f_i\rangle .$
\end{tabbing}

This proves~(\ref{keyclaim}), and finishes the proof of the theorem.
\end{proof}

\section{Extracting an integer factor}
\label{section:extract}

The goal of this section is to rewrite the right 
hand side of formula~(\ref{formula:lovero}) so that
it can be better compared to the formula given by
the Birch and Swinnerton-Dyer conjecture
(Conjecture~\ref{bsd2}). We also extract the
factor $\Mid \frac{H^+}{H[I_e]^+ + K^+} \miD$
from~$\frac{\LAf(1)}{\OAf}$. 
Recall that $H=H_1(J_0(N),\Z)$, $I_e$ is the annihilator ideal of~$e$,
$K$ denotes the kernel of~$\pi_*$ restricted to~$H$,
and $\Im$ denotes the annihilator ideal
of the divisor $(0) - (\infty)$.
\begin{lem} \label{lem:Ie}
$\Im e \subseteq H[I_e]^+$.
\end{lem}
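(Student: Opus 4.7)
The plan is to fix an element $t \in \Im$, set $x = te$, and verify separately that $x \in H$, that $x$ lies in $H[I_e]$, and that $x$ lies in $H^+$.

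First, I would identify the winding element with the divisor $(0)-(\infty)$ inside $J_0(N)(\C)$. Using the uniformization
\[
J_0(N)(\C) \;\cong\; \Hom_{\C}\bigl(H^0(X_0(N),\Omega^1),\C\bigr)\big/H
\]
coming from the period pairing of Section~\ref{section:formula}, the Abel--Jacobi image of $(0)-(\infty)$ is the functional $\omega \mapsto \int_\infty^0 \omega = -\int_{\{0,i\infty\}} \omega$, which is precisely the class of~$e$ modulo~$H$ by the definition of the winding element. (Here I use Manin--Drinfeld to know that $e \in H \otimes \Q$.) Consequently, the condition $t\cdot\bigl((0)-(\infty)\bigr) = 0$ in $J_0(N)(\C)$ translates into $te \in H$.

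Next, $te$ lies in $H[I_e]$: for any $s \in I_e$, commutativity of the Hecke algebra gives $s\cdot(te) = t\cdot(se) = 0$, so $I_e$ kills $te$. Then $te$ lies in $H^+$: the element $e$ is fixed by complex conjugation (as noted immediately after the Manin--Drinfeld citation in Section~\ref{section:formula}), and the Hecke operators are defined over $\Q$ and so commute with the complex conjugation involution on~$H$; hence $\overline{te} = t\bar e = te$. Combining the three containments yields $te \in H[I_e]^+$, which is what we want.

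The only non-formal input is the identification of $e \bmod H$ with the Abel--Jacobi class of $(0)-(\infty)$, and this is simply bookkeeping with the definitions (winding element, Abel--Jacobi, sign of the path from $0$ to $i\infty$); there is no serious obstacle. Everything else is a one-line check using commutativity of $\T$ and the $\Q$-rationality of the Hecke operators.
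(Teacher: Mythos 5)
Your argument is correct and reaches the statement by the same underlying observations the paper relies on, but you make explicit what the paper outsources. The paper disposes of the containment $\Im e \subseteq H^+$ by citing~\cite[II.18.6]{mazur:eisenstein}, with a remark that the cited proof only uses that $\Im$ annihilates $(0)-(\infty)$ and that the primality of~$N$ assumed there is inessential; you instead give a self-contained argument, identifying $e$ modulo~$H$ with the Abel--Jacobi image of $(0)-(\infty)$ (so $t\in\Im$ killing the divisor class forces $te\in H$), and then using the $\Q$-rationality of the Hecke operators to see that they commute with the complex-conjugation involution on~$H$, whence $te=\overline{te}$ because $e$ itself is conjugation-invariant. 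Both proofs then finish identically: $I_e$ kills $e$ and $\T$ is commutative, so $I_e$ kills $\Im e$, giving $\Im e\subseteq H[I_e]^+$. The trade-off is that the paper's citation is shorter but demands that the reader check Mazur's argument transfers to a differently defined~$\Im$ and to composite~$N$, whereas your version sidesteps that bookkeeping entirely.
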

\begin{proof}
By~\cite[II.18.6]{mazur:eisenstein}, we have $\Im e \subseteq H^+$
(note that in loc. cit., the definition of $\Im$ is different and $N$ 
is assumed to be prime; but the only essential property of $\Im$ that
is used in the proof is that $\Im$ annihilates the divisor $(0) - (\infty)$,
and the assumption that $N$ is prime is not used). Also, $e$ 
is killed by~$I_e$, hence so is~$\Im e$, and the lemma follows.
\end{proof}

\begin{thm} \label{thm:mainform}
Up to a power of~$2$,
\begin{eqnarray}\label{mainform}
\caf \cdot c_\infty(A_f)\cdot \frac{\LAf(1)}{\OAf} =
\frac{\Mid \frac{H^+}{H[I_e]^+ + K^+} \miD \cdot 
\Mid \frac{H[I_e]^+}{\Im  e + H[I_e]^+ \cap K^+} \miD }
{\Mid \pi_*(\T  e) / \pi_*(\Im  e) \miD}.
\end{eqnarray}
\end{thm}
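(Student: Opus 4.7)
The plan is to combine Theorem~\ref{thm:lovero} with elementary manipulations of quotient groups, together with a control on the cokernel of $\pi_*\colon H^+ \to H_1(A_f,\Z)^+$. By Theorem~\ref{thm:lovero}, the left-hand side of~(\ref{mainform}) equals the generalized lattice index $[H_1(A_f,\Z)^+ : \pi_*(\T e)]$ inside $H_1(A_f,\Q)^+$, so it suffices to identify this generalized index with the right-hand side of~(\ref{mainform}) up to powers of~$2$.

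First I would push the numerator on the right forward along $\pi_*$. Since $\pi_*$ restricted to $H^+$ has kernel $K^+$, the correspondence theorem yields $H^+/(H[I_e]^+ + K^+) \cong \pi_*(H^+)/\pi_*(H[I_e]^+)$, and, using Lemma~\ref{lem:Ie} that $\Im e \subseteq H[I_e]^+$, also $H[I_e]^+/(\Im e + H[I_e]^+ \cap K^+) \cong \pi_*(H[I_e]^+)/\pi_*(\Im e)$. Multiplying these orders and telescoping, the numerator of the right-hand side of~(\ref{mainform}) becomes $|\pi_*(H^+)/\pi_*(\Im e)|$. Since $\pi_*(\Im e)$ is a common sublattice of full rank $d = \dim A_f$ inside both $\pi_*(H^+)$ and $\pi_*(\T e)$ (the former because $\Im e \subseteq H^+$, the latter because $\Im$ is an ideal of $\T$ and so $\Im e \subseteq \T e$), dividing by $|\pi_*(\T e)/\pi_*(\Im e)|$ yields the generalized lattice index $[\pi_*(H^+) : \pi_*(\T e)]$ inside $H_1(A_f,\Q)^+$.

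It therefore remains to show that $[H_1(A_f,\Z)^+ : \pi_*(H^+)]$ is a power of~$2$, for then multiplicativity of generalized indices gives $[H_1(A_f,\Z)^+ : \pi_*(\T e)] = [H_1(A_f,\Z)^+ : \pi_*(H^+)] \cdot [\pi_*(H^+) : \pi_*(\T e)]$, which equals $[\pi_*(H^+) : \pi_*(\T e)]$ up to a power of~$2$, as desired. For this I would use that $\pi_*\colon H \to H_1(A_f,\Z)$ is surjective (standard, since $J_0(N) \to A_f$ is a surjection of abelian varieties), so that one has a short exact sequence $0 \to K \to H \to H_1(A_f,\Z) \to 0$ of $\Z[\langle\sigma\rangle]$-modules, where $\sigma$ denotes complex conjugation. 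Taking group cohomology in $\sigma$ gives the exact sequence $0 \to K^+ \to H^+ \to H_1(A_f,\Z)^+ \to H^1(\langle\sigma\rangle, K)$, so that $H_1(A_f,\Z)^+/\pi_*(H^+)$ embeds into $H^1(\langle\sigma\rangle, K)$; since $|\langle\sigma\rangle| = 2$, this cohomology group is annihilated by~$2$, and hence its order is a power of~$2$.

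The main obstacle is really just bookkeeping: one must be careful that $\pi_*(\T e)$ lies in $\pi_*(H^+) \otimes \Q$ rather than in $\pi_*(H^+)$ itself, so that all indices appearing are generalized lattice indices and not honest subgroup indices, and one must verify that $\pi_*(\Im e)$ has full rank in both $\pi_*(H^+)$ and $\pi_*(\T e)$ in order to perform the telescoping step. Since the surjectivity of $\pi_*$ on integral homology and the $2$-torsion property of $H^1(\langle\sigma\rangle,-)$ for a group of order~$2$ are both standard, once this bookkeeping is sorted out the proof is essentially formal.
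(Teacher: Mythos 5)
Your proposal is correct and takes essentially the same route as the paper's proof: both apply Theorem~\ref{thm:lovero}, then manipulate the generalized lattice index $[H_1(A_f,\Z)^+ : \pi_*(\T e)]$ via elementary quotient-group identities into the two displayed factors divided by $|\pi_*(\T e)/\pi_*(\Im e)|$, with the loss of a power of~$2$ traced to the comparison of $(H/K)^+$ with $H^+/K^+$. The one place where you say more than the paper is the cohomological argument that $H_1(A_f,\Z)^+/\pi_*(H^+)$ embeds in $H^1(\langle\sigma\rangle,K)$ and is therefore $2$-torsion; the paper simply asserts the "up to a power of $2$" equality $(H/K)^+ = H^+/K^+$, so your proof fills in that detail cleanly.
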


\begin{proof}
Since $\Im e \subseteq H^+$, 
and the map $\pi_*: H \ra H_1(A_f, \Z)$
is surjective (as the kernel of $J \ra A_f$ is connected),
from Theorem~\ref{thm:lovero}, we have
\begin{eqnarray} \label{inproof}
\caf \cdot c_\infty(A_f)\cdot \frac{\LAf(1)}{\OAf}  
= [\pi_*(H)^+ : \pi_*(\T  e)]  
= \frac{\Mid \pi_*(H)^+ /\pi_*(\Im  e) \miD}
{\Mid \pi_*(\T  e) / \pi_*(\Im  e) \miD}.
\end{eqnarray}

Now  $\pi_*(H) = H/K$ and up to a power of~$2$,
$(H/K)^+ = H^+/K^+$.
Also, $\pi_*(\Im e) = \Im e/(\Im e \cap K^+)$
and the kernel of the natural injective map $\Im e \ra ( \Im e + K^+ )/K^+$
is $\Im e \cap K^+$; so $\pi_*(\Im e) =  ( \Im e + K^+ )/K^+$.
Thus up to a power of~$2$,
\begin{eqnarray} \label{eqn:oddeq}
\Mid \pi_*(H)^+ /\pi_*(\Im  e) \miD =
\lm \frac{H^+}{ K^+ + \Im  e} \rmid.
\end{eqnarray}

Now $\Im e \subseteq H[I_e]^+$, and so we have
$$\lm \frac{H^+}{K^+ + \Im  e} \rmid
= \lm \frac{H^+}{H[I_e]^+ + K^+} \rmid \cdot 
\lm \frac{H[I_e]^+  + K^+}{\Im  e + K^+} \rmid.$$

We have a map $H[I_e]^+ \ra (H[I_e]^+  + K^+)/ (\Im  e + K^+)$
given by $h \mapsto h + (\Im  e + K^+)$. It is clearly
a surjection, and its kernel is precisely $\Im  e + H[I_e]^+ \cap K^+$.
So we have 
$$\lm \frac{H^+}{K^+ + \Im  e} \rmid
= \lm \frac{H^+}{H[I_e]^+ + K^+} \rmid \cdot 
\lm \frac{H[I_e]^+}{\Im  e + H[I_e]^+ \cap K^+} \rmid.$$
Putting this in~(\ref{eqn:oddeq}) and using~(\ref{inproof}), 
we get the theorem.
\end{proof}

By~\cite{agst:manin}, if a prime $p$ divides~$\caf$, then $p=2$ or
$p^2 \divs N$. Also, $c_\infty(A_f)$ is a power of~$2$.
Hence, in view of Theorem~\ref{thm:mainform}, the second part of
the Birch and Swinnerton-Dyer conjecture (Conjecture~\ref{bsd2})
just says that up to powers of~$2$ and powers of primes whose
squares divide~$N$, we have
\begin{eqnarray} \label{sqfreelevelShah}
\frac{\Mid \frac{H^+}{H[I_e]^+ + K^+} \miD \cdot 
\Mid \frac{H[I_e]^+}{\Im  e + H[I_e]^+ \cap K^+} \miD }
{\Mid \pi_*(\T  e) / \pi_*(\Im  e) \miD} \stackrel{?}{=}
\frac {\Mid \Sha(A_f) \miD \cdot \prod_{\scriptscriptstyle{p |N}}  c_p(A_f)}
      { \Mid A_f(\Q) \miD \cdot \Mid A_f^{\vee}(\Q) \miD }.
\end{eqnarray}

\begin{lem}[A-Stein] \label{lem:divs}
$\pi_*(\T  e) / \pi_*(\Im  e)$ is the subgroup of~$A_f(\Q)$
generated by the image of~$\pi((0)-(\infty))$ and so
${\Mid \pi_*(\T  e) / \pi_*(\Im  e) \miD}$ divides
$\Mid A_f(\Q) \miD$.
\end{lem}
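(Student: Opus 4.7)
The plan is to identify $\pi_*(\T e)/\pi_*(\Im e)$ with the subgroup $\T\cdot\pi((0)-(\infty))$ of $A_f(\Q)$ via the Abel--Jacobi embedding. The starting point is the standard fact that, under the embedding $H_1(J_0(N),\Q)/H \hookrightarrow J_0(N)(\C)$ coming from the complex-torus description of $J_0(N)(\C)$, the winding element $e$ maps to the cuspidal divisor $(0)-(\infty)$, which is a rational torsion point by Manin--Drinfeld. Naturality of this embedding with respect to $\pi\colon J_0(N)\to A_f$ gives a commutative square
\[
\begin{array}{ccc}
H_1(J_0(N),\Q)/H & \hookrightarrow & J_0(N)(\C)\\
\pi_*\downarrow\qquad & & \downarrow \pi\\
H_1(A_f,\Q)/H_1(A_f,\Z) & \hookrightarrow & A_f(\C),
\end{array}
\]
where the surjection $\pi_*(H) = H_1(A_f,\Z)$ comes from the short exact sequence of abelian varieties $0\to B\to J_0(N)\to A_f\to 0$ (with $B$ the identity component of $\ker\pi$) and the resulting short exact sequence on~$H_1$.

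From this diagram I would read off that an element $\pi_*(te)\in\pi_*(\T e)$ has image $t\cdot \pi((0)-(\infty))$ in $A_f(\C)$. This defines a homomorphism $\Phi\colon \pi_*(\T e)\to A_f(\Q)$ whose image is precisely $\T\cdot \pi((0)-(\infty))$; by Lemma~\ref{lem:Ie} (combined with $\pi_*(H)=H_1(A_f,\Z)$), the kernel of $\Phi$ contains $\pi_*(\Im e)$, so $\Phi$ factors through an induced surjection $\overline{\Phi}\colon \pi_*(\T e)/\pi_*(\Im e)\twoheadrightarrow \T\cdot\pi((0)-(\infty))$. The content of the lemma is that $\overline\Phi$ is an isomorphism, i.e.\ that $\pi_*(\T e)\cap H_1(A_f,\Z)=\pi_*(\Im e)$.

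To prove injectivity of $\overline\Phi$ I would set up the snake-lemma diagram comparing the short exact sequence $0\to \Im e\to \T e\to \T e/\Im e\to 0$ (whose quotient, by Abel--Jacobi for $J_0(N)$, is the cuspidal $\T$-module $\T\cdot((0)-(\infty))\subseteq J_0(N)(\Q)$) to the analogous sequence obtained by applying $\pi_*$. The key input for controlling the kernels is that $H_1(A_f,\Q)$ is a free module of rank~$2$ over the field $\T_\Q/I_f$ and $\pi_*(e)\neq 0$ (because $L(f,1)\neq 0$), so that $\mathrm{Ann}_{\T}(\pi_*(e))=I_f$; this lets one pin down $\ker(\pi_*)\cap \T e$ and deduce the required equality. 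The divisibility assertion then follows because $A_f(\Q)$ is finite under the standing assumption $L_{A_f}(1)\neq 0$ (by~\cite{kollog:finiteness}). I expect the main obstacle to be the identification of the annihilator of $\pi((0)-(\infty))$ in $\T$ with $\Im+I_f$, which is what is needed to make the snake-lemma argument close.
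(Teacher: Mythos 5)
Your setup via the Abel--Jacobi uniformization is the right one, and your reduction is correct: under the embedding $H_1(J_0(N),\Q)/H \hookrightarrow J_0(N)(\C)$ the winding element does go to $(0)-(\infty)$, the diagram does commute, $\pi_*(\Im e)\subseteq\ker\Phi$ does follow from $\Im e\subseteq H$ (Lemma~\ref{lem:Ie}) together with $\pi_*(H)=H_1(A_f,\Z)$, and the whole lemma does come down to showing that $\pi_*(\T e)\cap H_1(A_f,\Z)=\pi_*(\Im e)$. Your computation that $\Ann_{\T}(\pi_*(e))=I_f$ (using $\pi_*(e)\neq 0$, which needs $L(f,1)\neq 0$) correctly identifies $\ker\pi_*\cap\T e=I_f e$, and, since $\T e\cap H=\Im e$ by definition of~$\Im$, the injectivity of $\overline\Phi$ is equivalent to $\Ann_{\T}\big(\pi((0)-(\infty))\big)=\Im+I_f$, i.e.\ to the statement that the cuspidal $\T$-module $\T\cdot\big((0)-(\infty)\big)$ meets $\ker\pi = I_f J_0(N)$ exactly in $I_f\cdot\big((0)-(\infty)\big)$.

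That last statement, which you flag at the end as ``the main obstacle,'' is precisely the content that is not proved in your proposal. Computing $\Ann_\T(\pi_*(e))$ controls $\ker\pi_*\cap\T e$, but it tells you nothing about $\T e\cap(H+K_\Q)$ versus $(\T e\cap H)+(\T e\cap K_\Q)$, which is the modularity statement the snake-lemma diagram actually requires; without it the argument does not close, and it is not a formal consequence of anything you have assembled. So there is a genuine gap here: the crux has been located but not bridged. For what it is worth, the paper itself gives no argument either --- it simply cites the proof of Prop.~4.6 of~\cite{agst:bsd} for the first assertion (and Lagrange for the second), so there is no internal proof to compare your snake-lemma strategy against; but as written your proposal establishes only the easy containment $\pi_*(\Im e)\subseteq\ker\Phi$ and the surjectivity of $\Phi$, and leaves the injectivity, which is the substance of the lemma, open. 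One minor imprecision to fix in the write-up: $\ker\pi=I_f J_0(N)$ is already connected, so there is no need to pass to its identity component; if it were not connected, $\pi_*$ would fail to surject onto $H_1(A_f,\Z)$ and your diagram would break.
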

\begin{proof}
The first claim follows 
from the proof of Prop.~4.6 of~\cite{agst:bsd}
%, we see that $\pi_*(\T  e) / \pi_*(\Im  e)$ is the subgroup of~$A_f(\Q)$
%generated by the image of~$\pi((0)-(\infty))$. Our result 
and the second claim follows from the first
by Lagrange's theorem that the order of a subgroup divides the
order of the group.
%hence ${\Mid \pi_*(\T  e) / \pi_*(\Im  e) \miD}$ 
%divides $\Mid A_f(\Q) \miD$ (see 
%the proof of Prop.~4.6 of~\cite{agst:bsd}). 
\end{proof}

Thus the second part of
the Birch and Swinnerton-Dyer conjecture 
implies that up to powers of~$2$ 
and powers of primes whose square divides~$N$,
the factor $\Mid \frac{H^+}{H[I_e]^+ + K^+} \miD$
divides $\Mid \Sha(A_f) \miD \cdot \prod_{p\mid N} c_p(A_f)$.

If $N$ is prime, then things simplify significantly,
since $\caf$ is a power of~$2$ by~\cite{agst:manin}
and by~\cite{emerton:optimal},
we have ${\Mid \pi_*(\T  e) / \pi_*(\Im  e) \miD} = 
c_{\scriptscriptstyle N}(A_f) 
= \Mid A_f(\Q) \miD = \Mid A_f^{\vee}(\Q) \miD$.
Hence if $N$ is prime, the Birch and Swinnerton-Dyer conjecture says 
that up to powers of~$2$,
\begin{eqnarray} \label{primelevelShah}
\lm \frac{H^+}{H[I_e]^+ + K^+} \rmid \cdot 
\lm \frac{H[I_e]^+}{\Im  e + H[I_e]^+ \cap K^+} \rmid \stackrel{?}{=} 
\Mid \Sha(A_f) \miD.
\end{eqnarray}

In particular, we note the following result for easy reference:

\begin{prop}\label{prop:divs}
If $N$ is prime, then 
the second part of the Birch and Swinnerton-Dyer 
conjecture implies that the odd part of 
$\Mid \frac{H^+}{H[I_e]^+ + K^+} \miD$ divides $\Mid \Sha(A_f) \miD$.
\end{prop}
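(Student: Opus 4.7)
The plan is that this proposition is an immediate consequence of formula~(\ref{primelevelShah}), which was already established in the paragraph preceding the statement under the hypothesis that $N$ is prime. So the only work is to unpack what ``the second part of the Birch and Swinnerton-Dyer conjecture implies'' means here and to pass to odd parts cleanly.

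First I would invoke formula~(\ref{primelevelShah}): assuming Conjecture~\ref{bsd2}, and using that $N$ prime forces $\caf$ to be a power of $2$ (by~\cite{agst:manin}) and forces $\Mid \pi_*(\T e)/\pi_*(\Im e) \miD = \Mid A_f(\Q) \miD = \Mid A_f^{\vee}(\Q) \miD$ (by~\cite{emerton:optimal}), the identity
\[
\lm \frac{H^+}{H[I_e]^+ + K^+} \rmid \cdot \lm \frac{H[I_e]^+}{\Im e + H[I_e]^+ \cap K^+} \rmid \;=\; \Mid \Sha(A_f) \miD
\]
holds up to powers of~$2$. Both factors on the left are non-negative integers: the first because $H[I_e]^+ + K^+ \subseteq H^+$, and the second because $\Im e \subseteq H[I_e]^+$ by Lemma~\ref{lem:Ie} (so $\Im e + H[I_e]^+ \cap K^+ \subseteq H[I_e]^+$).

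Next I would pass to odd parts. Since the equality above holds up to a power of~$2$, taking the odd part of each side gives an exact equality of odd integers: the odd part of the product on the left equals the odd part of $\Mid \Sha(A_f) \miD$. Because the odd part of any factor in a product of positive integers divides the odd part of the product, the odd part of $\Mid \frac{H^+}{H[I_e]^+ + K^+} \miD$ divides the odd part of $\Mid \Sha(A_f) \miD$, and hence divides $\Mid \Sha(A_f) \miD$ itself. This completes the argument.

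There is no real obstacle here — all of the substance already went into Theorem~\ref{thm:mainform} and the simplifications at prime level recorded just above the proposition. The only mildly delicate point is being careful with the phrase ``up to a power of~$2$'': strictly speaking, what~(\ref{primelevelShah}) gives is that the two sides agree after multiplying one side by some power of~$2$, possibly negative, so one should restrict to the odd part on both sides before concluding divisibility. That is exactly why the statement of the proposition is formulated in terms of the odd part rather than the full integer.
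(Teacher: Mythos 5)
Your proposal is correct and matches the paper's reasoning exactly: the proposition is stated "for easy reference" immediately after formula~(\ref{primelevelShah}), and its proof is precisely the passage to odd parts in that formula as you describe. The only thing to note is that the finiteness of both factors (hence that both are positive integers, not merely non-negative) is implicit in the statement of Theorem~\ref{thm:mainform}, but you handle the "up to powers of~$2$" bookkeeping exactly as intended.
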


\section{Relating the factor to an intersection}
\label{section:fac_to_congr}

In this section, we will relate the factor 
$\Mid \frac{H^+}{H[I_e]^+ + K^+} \miD$ to the order of
the intersection of certain subquotients of~$J$.
%and then relate these intersections to certain congruences.

We first need the following lemma, which
is an adaptation of~\cite[Prop.~3.2]{agst:bsd}.
Suppose $J$ is an abelian variety over~$\Q$,
and $A$ and $B$ are abelian subvarieties of~$J$.
Let $(A \cap B)^0$ denote the connected component 
of~$A \cap B$ that contains the identity; it is
an abelian variety over~$\Q$. Let $(A\cap B)^c$
denote the component group of~$A\cap B$, i.e.,
$(A\cap B)/(A \cap B)^0$.
If $G$ is a finitely generated abelian group, then we denote
its torsion part by~$G_{\tor}$.

\begin{lem}\label{lemma:intersection}
With notation as above, there is a natural isomorphism of groups
$$(A\cap B)^c \isom 
  \left(\frac{H_1(J,\Z)}{H_1(A,\Z) + H_1(B,\Z)}\right)_{\tor.}$$
\end{lem}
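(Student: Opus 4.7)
The plan is to work with the complex uniformization and extract the component group $(A\cap B)^c$ from a four-term exact sequence produced by the homotopy long exact sequence of a suitable fibration. Consider the homomorphism
\[
\phi : A \times B \lra J, \qquad (a,b) \mapsto a-b,
\]
whose image is the abelian subvariety $A+B \subseteq J$ and whose kernel is $A\cap B$ embedded diagonally via $x\mapsto(x,x)$. Analytically, $\phi$ realizes $A\times B$ as a principal $(A\cap B)$-bundle over $A+B$.

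Feeding this into the long exact sequence of homotopy groups and using that the complex points of any abelian variety form a $K(\pi,1)$ (so $\pi_2(A+B)=0$), together with $\pi_0(A\times B)=\pi_0(A+B)=0$ and the identification of $\pi_1$ of a connected complex torus with its $H_1$, I obtain the four-term exact sequence
\[
0 \to H_1((A\cap B)^0,\Z) \to H_1(A,\Z)\oplus H_1(B,\Z) \to H_1(A+B,\Z) \to (A\cap B)^c \to 0,
\]
whose middle arrow is $(u,v)\mapsto u-v$. Since $H_1(A,\Z)$ and $H_1(B,\Z)$ embed into $H_1(A+B,\Z)$ through the inclusions of $A$ and $B$ into $A+B$, the image of this arrow is precisely $H_1(A,\Z)+H_1(B,\Z)$. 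Hence $(A\cap B)^c \cong H_1(A+B,\Z)/(H_1(A,\Z)+H_1(B,\Z))$.

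Next, I transfer this identification from $A+B$ to $J$. Since $J/(A+B)$ is an abelian variety, its $H_1$ is torsion-free, and the short exact sequence $0\to A+B\to J\to J/(A+B)\to 0$ induces a short exact sequence
\[
0 \to H_1(A+B,\Z) \to H_1(J,\Z) \to H_1(J/(A+B),\Z) \to 0,
\]
exhibiting $H_1(A+B,\Z)$ as a saturated sublattice of $H_1(J,\Z)$. Therefore the torsion subgroup of $H_1(J,\Z)/(H_1(A,\Z)+H_1(B,\Z))$ equals $H_1(A+B,\Z)/(H_1(A,\Z)+H_1(B,\Z))$, which by the previous step is $(A\cap B)^c$. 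Naturality of the resulting isomorphism is inherited from naturality of the long exact sequences used.

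The one delicate step is deriving the four-term sequence when $A\cap B$ is disconnected: one must interpret $\pi_1(A\cap B)$ as $\pi_1((A\cap B)^0)$ and identify the connecting map $H_1(A+B,\Z)\to \pi_0(A\cap B)$ with a surjection onto $(A\cap B)^c$. Since the analogous argument is already carried out in \cite[Prop.~3.2]{agst:bsd}, I expect this to present no real obstacle.
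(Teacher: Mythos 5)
Your argument is correct, but it follows a genuinely different route from the paper's. The paper sets up the three-row ladder of complex uniformizations $0\to H_E\to T_E\to E(\C)\to 0$ for $E$ running through $(A\cap B)^0$, $A\oplus B$, $J$, $J/(A+B)$, applies the snake lemma to the map $A\oplus B\to J$, and reads off the four-term exact sequence $0 \to (A\cap B)^0 \to A\cap B \to H_J/(H_A+H_B) \to T_J/(T_A+T_B)$ directly; it then identifies $(A\cap B)^c$ with the torsion of $H_J/(H_A+H_B)$ by noting the target is a $\C$-vector space. You replace the snake lemma with the homotopy long exact sequence of the principal $(A\cap B)$-bundle $A\times B\to A+B$, obtaining $(A\cap B)^c\cong H_1(A+B,\Z)/(H_1(A,\Z)+H_1(B,\Z))$ in one step, and then pass from $A+B$ to $J$ by observing $H_1(A+B,\Z)$ is saturated in $H_1(J,\Z)$. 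The trade-off is clear: the paper's snake lemma works at once with $J$ but needs an explicit diagram and a small torsion argument at the end; yours gets the four-term sequence more cleanly from topology but requires the extra saturation step. As for the ``delicate step'' you flag: in the long exact sequence of a fibration whose fiber, total space, and base are all topological groups with the structure maps homomorphisms (which is the case for a principal bundle), the term $\pi_1$ of the fiber is indeed $\pi_1$ of the basepoint component, $\pi_0$ of the fiber carries the quotient group structure $(A\cap B)^c$, and the connecting map $\pi_1(A+B)\to\pi_0(A\cap B)$ is a group homomorphism, so the sequence you write down is an exact sequence of abelian groups and the identification of the cokernel is legitimate.
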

\begin{proof}
If $E$ is an abelian variety over~$\Q$, then let
$T_E$ denote the vector space $\Hom_{\C}(H^0(E_\C, \Omega^1), \C)$,
and let $H_E$ denote $H_1(E(\C), \Z)$.
There is a natural inclusion $H_E \ra T_E$, and 
the complex torus $E(\C)$ is isomorphic
to $T_E/H_E$. 
There is also a natural inclusion of~$T_A$ and~$T_B$ in~$T_J$,
and we have $H_A = H_J \cap T_A$ and $H_B = H_J \cap T_B$.

The exact sequence
$$0\ra A\cap B \ra A \oplus B \ra J$$
fits into the following diagram obtained from complex analytic uniformization.
$$\xymatrix{
& 0 \ar[d]  &  0 \ar[d]& \\
H_{(A \cap B)^0} \ar[d] \ar[r] 
& {H_A \oplus H_B}\ar[d] \ar[r] & {H_J} \ar[r]\ar[d]&
            {H_J/(H_A + H_B)}\ar[d] \\
T_{(A \cap B)^0} \ar[r]\ar[d] & {T_A \oplus T_B}\ar[d] \ar[r] & T_J \ar[r]\ar[d] & {T_J/(T_A+T_B)}\ar[d]\\
{A\cap B}\ar[r] & A\oplus B\ar[r] \ar[d]& J  \ar[d] \ar[r] & J/(A+ B)\\
& 0   &  0 & 
}$$
Using the snake lemma, which connects the kernel $A\cap B$ of 
$A\oplus B \ra J$ to the cokernel 
of $H_A \oplus H_B \ra H_J$, we obtain an exact sequence
$$0 \ra T_{(A \cap B)^0}/H_{(A \cap B)^0} \ra 
   A\cap B \ra 
   H_J/(H_A + H_B) \ra
   T_J/(T_A+T_B).$$
Now $T_{(A \cap B)^0}/H_{(A \cap B)^0} = (A\cap B)^0$,
so the exact sequence becomes
$$0 \ra   (A\cap B)^c \ra 
   H_J/(H_A + H_B) \ra
   T_J/(T_A+T_B).$$
Since $T_J/(T_A+T_B)$ is a $\C$-vector space, the torsion 
part of $H_J/(H_A + H_B)$ must map to~$0$.
No non-torsion in $H_J/(H_A + H_B)$ could
map to~$0$, because if it did then $(A\cap B)^c$ would not
be finite.  The lemma follows.
\end{proof}

In this paragraph and the next, the symbol~$g$ stands
for a newform of level~$N_g$ dividing~$N$. 
Let $S'_g$ denote the subspace of~$S_2(\Gamma_0(N),\C)$
spanned by the forms $g(dz)$ where $d$ ranges over the
divisors of~$N/N_g$. If $n$ is a positive integer, then
let $\sigma_0(n)$ denote the number of divisors of~$n$.
Then $S'_g$ has dimension~$\sigma_0(N/N_g)$.
Let $[g]$ denote the Galois orbit of~$g$, % as above,
and let $S_{[g]}$ denote the $\Q$-subspace of
forms in~$\oplus_{h \in [g]} S'_h$ with rational Fourier
coefficients. We have 
$S_2(\Gamma_0(N),\Q) = \oplus_{[g]} S_{[g]}$, where
the sum is over Galois conjugacy classes of newforms of
some level dividing~$N$.

%If $T$ is a subset of the set of Galois conjugacy classes of newforms of
%some level dividing~$N$, then
%then let $T^\perp$ denote the complement.
%, and let $S = \oplus_{[f] \in I} S_f$and $T = \oplus_{[f] \not\in I} S_f$
%$T$ be the remaining Galois conjugacy classes. 
%let $S_T = \oplus_{[g] \in T} S_{[g]}$, 
%$X_{S'} = \oplus_{[f] \in S^\perp} S_{[f]}$,
% $Y = \oplus_{[f] \not\in S} S_{[f]}$.
%$I_T = \Ann_\T S_T$, and let 
%, and $I_{S^\perp} = \Ann_\T X_{S^\perp}$.
%If $f$ is a newform of level~$N_f$ dividing~$N$, then
%note that $I_f = I_{[f]} = \Ann_\T f$.
Let $B_g$ denote the abelian subvariety of~$J_0(N_g)$
associated to~$g$ by Shimura~\cite[Thm.~7.14]{shimura:intro}, 
and let $J_g$ be the sum of
the images of~$B_g$ in~$J=J_0(N)$
%\edit{replace by J everywhere?}
under the usual degeneracy maps.
Then $J_g$ is isogenous to $B_g^{\sigma_0(N/N_g)}$.
If $J'$ is an abelian subvariety of~$J$ that is preserved by $\End J$,
then by $\Ann_{\T \tensor \Q} J'$ we mean the kernel of the
image of $\T \tensor \Q$ in $\End J' \tensor \Q$.
Note that $\End J$ preserves~$J_g$ (e.g., see~\cite[\S~3]{ars:moddeg})
and $\Ann_{\T \tensor \Q} J_g = 
\Ann_{\T \tensor \Q} S_{[g]}$. 
%\edit{Check if Ann and first lemma needed later at all.}
If $T$ is a subset of the set of Galois conjugacy classes of newforms of
some level dividing~$N$, then let 
$S_T = \oplus_{[g] \in T} S_{[g]}$, $I_T = \Ann_\T S_T$, and let 
$J_T$ denote the abelian subvariety of~$J$ generated
by the~$J_g$ for all $[g] \in T$. Then $J_T$ is isogenous
to $\prod_{[g] \in T} J_g$, hence is preserved by~$\End J$ and 
moreover,
%From the definitions of the degeneracy maps, we also see that 
%Hence 
$\Ann_{\T \tensor \Q} J_T = 
\Ann_{\T \tensor \Q} S_T$. 
%\edit{move all this into lemma?}
Intersecting with~$\T$, we get

\begin{lem} \label{lem:ann}
$\Ann_\T J_T = \Ann_\T S_T = I_T$.
\end{lem}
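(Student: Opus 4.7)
The plan is to deduce the lemma from the already-noted equality of ideals $\Ann_{\T \tensor \Q} J_T = \Ann_{\T \tensor \Q} S_T$ in $\T \tensor \Q$, by intersecting with $\T$. Since $I_T = \Ann_\T S_T$ holds by definition, only the first equality $\Ann_\T J_T = \Ann_\T S_T$ needs argument.

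First I would observe that $\Ann_\T S_T = \T \cap \Ann_{\T \tensor \Q} S_T$. Indeed, $S_T$ is a $\Q$-vector space and the $\T$-action on $S_T$ extends uniquely to a $\Q$-linear action of $\T \tensor \Q$; hence an element $T \in \T$ kills $S_T$ if and only if $T \tensor 1$ does, so $\Ann_\T S_T$ is precisely the contraction of $\Ann_{\T \tensor \Q} S_T$ to $\T$.

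Next I would establish the analogous statement $\Ann_\T J_T = \T \cap \Ann_{\T \tensor \Q} J_T$. Here $J_T$ is preserved by $\End J$ as noted in the discussion preceding the lemma, so $\T$ acts on $J_T$ through a ring homomorphism $\T \ra \End J_T$. Since $\End J_T$ is torsion-free as a $\Z$-module (the endomorphism ring of an abelian variety being free of finite rank over $\Z$), an element $T \in \T$ maps to zero in $\End J_T$ if and only if its image in $\End J_T \tensor \Q$ is zero, which happens precisely when $T \tensor 1 \in \T \tensor \Q$ annihilates $J_T$.

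Combining these two identifications with the equality $\Ann_{\T \tensor \Q} J_T = \Ann_{\T \tensor \Q} S_T$ stated just before the lemma yields $\Ann_\T J_T = \Ann_\T S_T$, and together with the defining equality $\Ann_\T S_T = I_T$ this gives the lemma. I do not anticipate any serious obstacle; the only substantive point is the torsion-freeness of $\End J_T$, which ensures that annihilation of $J_T$ can be detected after tensoring with $\Q$.
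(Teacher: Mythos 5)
Your proof is correct and follows essentially the same route as the paper: the paper simply records the equality $\Ann_{\T \tensor \Q} J_T = \Ann_{\T \tensor \Q} S_T$ and says "Intersecting with $\T$, we get" the lemma, and you supply the small justification (torsion-freeness of $\End J_T$, and the fact that $S_T$ is a $\Q$-vector space) for why the contraction to $\T$ behaves as expected on both sides.
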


\comment{
Let $A_S = J / J_S$, so that we have a short exact sequence
$$0 \ra J_S \ra J \ra A_S \ra 0,$$
part of whose associated long exact sequence of homology is:
$$\ldots \ra H_0(A_S, \Z) \ra H_1(J_S,\Z) \ra H_1(J,\Z)
\ra H_1(A_S,\Z) \ra \ldots.$$
Since $A_S$ is connected and $H_1(A_S,\Z)$ is torsion-free,
we have a natural injection of~$H_1(J_S, \Z)$ into~$H$, 
with torsion-free cokernel.
}

We say that a subgroup~$G'$ of a finitely generated abelian group~$G$
is {\em saturated} (in~$G$) if the quotient~$G/G'$ is torsion-free.
Recall that $H = H_1(J_0(N),\Z)$. 

\begin{lem} \label{lem:annh1}
%We have $\Ann_\T J_S = I_S$ and
$H_1(J_T, \Z) = H[I_T]$.
\end{lem}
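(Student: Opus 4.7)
The plan is to show that both $H_1(J_T,\Z)$ and $H[I_T]$ are saturated subgroups of $H$, to establish the obvious inclusion $H_1(J_T,\Z)\subseteq H[I_T]$, and then to deduce equality by checking that the two subgroups have the same $\Q$-rank. First I would observe that the short exact sequence $0\to J_T\to J\to J/J_T\to 0$, together with connectedness of $J/J_T$, gives a long exact homology sequence of the form $0\to H_1(J_T,\Z)\to H\to H_1(J/J_T,\Z)$, with torsion-free cokernel; so $H_1(J_T,\Z)$ is saturated in $H$. The subgroup $H[I_T]$ is saturated for the trivial reason that $H$ is torsion-free: if $nh\in H[I_T]$ with $n>0$, then $n\cdot I_T h=0$ in $H$ forces $I_T h=0$. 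By Lemma~\ref{lem:ann}, $I_T=\Ann_\T J_T$, so $I_T$ annihilates $J_T$ and hence its homology, which gives the inclusion $H_1(J_T,\Z)\subseteq H[I_T]$. Since both are saturated in $H$, it suffices to check that $H_1(J_T,\Q)=(H\otimes\Q)[I_T]$ inside $H\otimes\Q$.

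To verify this rational identity, let $T^c$ denote the complementary set of Galois orbits of newforms of level dividing~$N$. The isogeny $J\sim J_T\times J_{T^c}$ induces a $\T$-stable direct sum decomposition
\begin{equation*}
H\otimes\Q \;=\; H_1(J_T,\Q)\oplus H_1(J_{T^c},\Q).
\end{equation*}
The inclusion $H_1(J_T,\Q)\subseteq (H\otimes\Q)[I_T]$ is immediate. For the reverse, write an element $x\in(H\otimes\Q)[I_T]$ as $x=x_T+x_{T^c}$; then $I_T$ kills $x_{T^c}$, and by Lemma~\ref{lem:ann} applied to $T^c$ the ideal $I_{T^c}=\Ann_\T J_{T^c}$ also kills $x_{T^c}$. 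Hence $x_{T^c}$ is annihilated by $I_T+I_{T^c}$, and the proof reduces to the comaximality statement
\begin{equation*}
(I_T+I_{T^c})\otimes\Q \;=\; \T\otimes\Q.
\end{equation*}

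This last step is the main obstacle, and I would handle it by a standard multiplicity-one argument. If the comaximality failed, there would exist a maximal ideal $\mathfrak{m}$ of $\T\otimes\Q$ containing both $I_T$ and $I_{T^c}$, corresponding to a $\Qbar$-valued system of Hecke eigenvalues $\Phi\colon\T\to\Qbar$ that appears on some eigenform in $S_T(\Qbar)$ and also on some eigenform in $S_{T^c}(\Qbar)$. However, every eigenform in $S_{[g]}(\Qbar)$ has $T_\ell$-eigenvalue equal to $a_\ell(g')$ for some Galois conjugate $g'$ of $g$ and every $\ell\nmid N$, so $\Phi$ determines a single Galois orbit of a newform of level dividing~$N$ by strong multiplicity one; this would force $T\cap T^c\ne\emptyset$, a contradiction. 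The only conceivable subtlety is that one must use enough Hecke operators to separate orbits, but the $T_\ell$ with $\ell\nmid N$ already suffice. Once comaximality is in hand, $x_{T^c}=0$, which completes the $\Q$-rank comparison and hence the proof.
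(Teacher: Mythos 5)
Your proof is correct and takes a genuinely different route to the key rational identity than the paper does. The paper reduces to showing $\dim_\Q H_1(J_T,\Q) = \dim_\Q (H\otimes\Q)[I_T]$ and does so by a dimension count: it invokes the facts that $H\otimes\Q$ is free of rank $2$ over $\T\otimes\Q$ and that $S_2(\Gamma_0(N),\Q)$ is free of rank $1$ over $\T\otimes\Q$, so both dimensions come out to $2\dim_\Q(\T/I_T\otimes\Q)$. You instead prove the containment $(H\otimes\Q)[I_T]\subseteq H_1(J_T,\Q)$ directly: you decompose $H\otimes\Q = H_1(J_T,\Q)\oplus H_1(J_{T^c},\Q)$ via the isogeny $J\sim J_T\times J_{T^c}$ (which is $\T$-stable since $\End J$ preserves both subvarieties), note that the $T^c$-component of an $I_T$-torsion element is killed by $I_T+I_{T^c}$, and close by proving the rational comaximality $(I_T+I_{T^c})\otimes\Q=\T\otimes\Q$ via strong multiplicity one. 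The two routes are of course related — the rank-$2$ freeness of $H\otimes\Q$ over $\T\otimes\Q$ that the paper cites also ultimately rests on multiplicity-one considerations — but yours avoids invoking that heavier structural fact, using only the isogeny decomposition (which is immediate from the setup of Section~4) plus a single, cleanly isolated application of strong multiplicity one. The trade-off: the paper's argument is shorter once the freeness facts are granted, while yours makes more transparent exactly where the nondegeneracy comes from and gives the stronger set-theoretic identity $(H\otimes\Q)[I_T]=H_1(J_T,\Q)$ rather than just equality of dimensions.
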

\begin{proof}
We have $H_1(J_T, \Z) = H \cap \Hom_{\C}(H^0(J_T(\C), \Omega^1), \C)$.
Hence $H_1(J_T, \Z)$ is saturated in~$H$ and 
it is clear by Lemma~\ref{lem:ann}
that $H_1(J_T, \Z) \subseteq H[I_T]$. 
%In view of the fact that $H[I_T]$ is saturated in~$H$,
So it suffices to show that the inclusion
$H_1(J_T, \Z) \tensor \Q \subseteq H[I_T] \tensor \Q$ is
an equality, for which in turn, it suffices to check
that the two $\Q$-vector spaces have the same dimension.
%By the perfectness of the de Rham pairing, 
Now $H \tensor \Q$ is free of rank~$2$ over~$\T \tensor \Q$,
and thus, $\dim_\Q H[I_T] = 2 \cdot \dim_\Q (\T/ I_T \tensor \Q)$. 
%where if $G$ is a free abelian group, then $\rank(G)$ denotes its rank.
We have
\begin{tabbing}
\= $\dim_\Q (H_1(J_T, \Z) \tensor \Q)$ \\
\> $= 2 \cdot \dim J_T = 2 \cdot \dim_\Q H^0(J_T(\Q), \Omega_{J_T(\Q)/\Q})$ \\
\> $= 2 \cdot \sum_{[g] \in T} \dim_\Q H^0(J_g(\Q), \Omega_{J_g(\Q)/\Q}) 
= 2 \cdot \sum_{[g] \in T} \dim_\Q T_{[g]}$ \\
\> $= 2 \cdot \dim_\Q S_T = 2 \cdot \dim_\Q (\T/I_T \tensor \Q)$, 
\end{tabbing}
where the last equality follows 
since %by Lemma~\ref{lem:ann} considering that
$S_2(\Gamma_0(N),\Q)$ is free of rank one
over~$\T\tensor \Q$.
Thus $\dim_\Q H_1(J_T, \Z) \tensor \Q = 
\dim_\Q H[I_T] \tensor \Q$, and we are done.
\end{proof}

Let $T_1$, $T_2$, and~$T_3$ be the sets of Galois conjugacy 
classes of newforms of level dividing~$N$ of analytic rank zero,
of classes not in~$[f]$, and of classes of newforms of analytic
rank zero except those in~$[f]$ respectively.
Let $A = J_{T_1}$, $B = J_{T_2}$, and $C = J_{T_3}$.
Also for simplicity, we will write $I_A$ for~$I_{T_1}$,
$I_B$ for~$I_{T_2}$, and $I_C$ for~$I_{T_3}$, as well as
$S_A$ for~$S_{T_1}$,
$S_B$ for~$S_{T_2}$, and $S_C$ for~$S_{T_3}$.
%Loosely speaking, $A$, $B$, and~$C$ are associated
%to the subspace of~$S_2(\Gamma_0(N),\C)$
%generated 
%by eigenforms of analytic rank zero,
%the subspace generated by  eigenforms that are not 
%conjugates of~$f$ respectively, and 
%the subspace generated by eigenforms of analytic rank zero that are 
%not conjugates of~$f$ respectively.
Note that both $I_A$ and $I_B$ are contained in~$I_C$,
%\edit{used?}
and $C$ is an abelian subvariety of
both~$A$ and~$B$. The differentials
on $A/C$ and $B/C$ correspond
respectively to the space generated by
the conjugates of~$f$ and the space generated by eigenforms of analytic rank
greater than zero.

\begin{lem} \label{prop:intersect}
$${\rmid \frac{H}{H[I_e] + K} \rmid} =
\Mid (A/C) \cap (B/C) \miD\ ,$$
where the latter intersection is considered in $J/C$.
\end{lem}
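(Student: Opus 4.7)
The plan is to identify both sides with the order of the quotient $H/(H[I_A]+H[I_B])$, using Lemma~\ref{lemma:intersection} on the geometric side and two annihilator computations on the algebraic side.

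First I would identify $K = H[I_B]$ and $H[I_e] = H[I_A]$. For $K$: the kernel of $\pi\colon J\to A_f = J/I_fJ$ is the connected abelian subvariety $I_fJ$, so the homology long exact sequence yields $K = H_1(I_fJ,\Z)$. Because $f$ is a newform at level $N$, we have $S_2[I_f] = S_{[f]}$, so the cotangent space of $I_fJ$ is $H^0(J,\Omega^1)/H^0(A_f,\Omega^1) = S_2/S_{[f]}$. The cotangent space of $B$ is likewise $S_2/S_2[I_B] = S_2/S_{[f]}$, using the decomposition $\T\tensor\Q\isom\prod_{[g]}\T_{[g]}\tensor\Q$ along Galois orbits $[g]$ of newforms of level dividing $N$, under which $I_B\tensor\Q$ is the kernel of the projection to $\T_{[f]}\tensor\Q$. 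Since an abelian subvariety of $J$ is determined by its tangent space at the origin, this forces $I_fJ = B$, and Lemma~\ref{lem:annh1} then gives $K = H_1(B,\Z) = H[I_B]$. For $H[I_e]$: decomposing $H^+\tensor\Q\isom\prod_{[g]} H^+_{[g]}\tensor\Q$ correspondingly, each $H^+_{[g]}\tensor\Q$ is free of rank one over $\T_{[g]}\tensor\Q$, and the component $e_{[g]}$ corresponds under integration to the functional $\omega\mapsto -L(\omega,1)$ on $S_{[g]}$, which is nonzero precisely when $L(g,1)\neq 0$, i.e.\ when $[g]\in T_1$. Therefore $I_e\tensor\Q = I_A\tensor\Q$; since $I_A$ is saturated in $\T$ (because $\T/I_A$ embeds in $\End A$) and $H$ is torsion-free, this lifts to $H[I_e] = H[I_A]$.

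Next I would apply Lemma~\ref{lemma:intersection} to the pair $A/C$, $B/C$ inside $J/C$; this is valid because $T_3\subseteq T_1\cap T_2$, so $C\subseteq A$ and $C\subseteq B$. The homology long exact sequences applied to $0\to C\to J\to J/C\to 0$ and to its analogues for $A$ and $B$, together with Lemma~\ref{lem:annh1}, give compatible identifications
$$H_1(J/C,\Z)=H/H[I_C],\quad H_1(A/C,\Z)=H[I_A]/H[I_C],\quad H_1(B/C,\Z)=H[I_B]/H[I_C]$$
inside $H_1(J/C,\Z)$, so that
$$\frac{H_1(J/C,\Z)}{H_1(A/C,\Z)+H_1(B/C,\Z)} = \frac{H}{H[I_A]+H[I_B]}.$$

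Finally I would verify that $(A/C)\cap(B/C)$ is finite, so that it coincides with its own component group. The cotangent spaces of $A/C$ and $B/C$, viewed as subspaces of $H^0(J/C,\Omega^1)$, are $S_{[f]}$ (since $T_1\setminus T_3 = \{[f]\}$) and $\bigoplus_{[g]\colon L(g,1)=0} S_{[g]}$ (since $T_2\setminus T_3$ consists of the classes of positive analytic rank), respectively; these are transverse and their direct sum equals all of $H^0(J/C,\Omega^1)$. Hence $(A/C)+(B/C)=J/C$ and the intersection is zero-dimensional, hence finite. Putting everything together yields
$$\left|\frac{H}{H[I_e]+K}\right| = \left|\frac{H}{H[I_A]+H[I_B]}\right| = |(A/C)\cap(B/C)|.$$
The most delicate step is the identification $H[I_e] = H[I_A]$, which requires recognizing the annihilator of the winding element through the isotypic decomposition of $H^+\tensor\Q$ over $\T\tensor\Q$ and exploiting the saturation of $I_A$.
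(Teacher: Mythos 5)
Your plan is essentially the paper's: identify $K=H_1(B,\Z)$ and $H[I_e]=H_1(A,\Z)$ via Lemma~\ref{lem:annh1}, then apply Lemma~\ref{lemma:intersection}.  The variants you choose are all legitimate routes to the same place: you apply Lemma~\ref{lemma:intersection} directly to $A/C,B/C$ in $J/C$ (with an extra homology calculation) while the paper applies it to $A,B$ in $J$ and uses $(A\cap B)^0=C$; you establish $B=\ker(J\to A_f)$ by matching tangent/cotangent spaces while the paper states it as evident and then uses a rank count; and you prove $I_e\tensor\Q=I_A\tensor\Q$ from the Hecke decomposition of $H^+\tensor\Q$ rather than citing the relevant lemma of Parent (the paper actually proves $I_e=I_A$ exactly, but your rational identity plus saturation of $H[\cdot]$ suffices).

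There is, however, a real error in your cotangent computation for $B$.  By definition $I_B=I_{T_2}=\Ann_{\T}S_{T_2}$, so $I_B\tensor\Q$ is the \emph{image} of the $\T_{[f]}$-factor under the decomposition $\T\tensor\Q\isom\prod_{[g]}\T_{[g]}\tensor\Q$, not the kernel of the projection to $\T_{[f]}\tensor\Q$ (that kernel is $I_f\tensor\Q$, i.e.\ the paper's $\IB=I_{T_2'}$).  Consequently $S_2[I_B]=S_{T_2}$, which is the complement of $S_{[f]}$, not $S_{[f]}$ itself; the identity $S_2[I_B]=S_{[f]}$ is false whenever there is more than one orbit besides $[f]$.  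The correct statement is that the cotangent space of $B=J_{T_2}$ is $S_2/S_{T_2'}=S_2/S_2[I_f]=S_2/S_{[f]}$, after which your conclusion $B=I_fJ$ and the rest of the argument do go through.  You should also be careful not to reuse the symbol $I_B$ inconsistently: at the end you (correctly) invoke $H_1(B,\Z)=H[I_B]$ with $I_B=I_{T_2}$, which is incompatible with the earlier (incorrect) ``$I_B\tensor\Q$ is the kernel of the projection to $\T_{[f]}\tensor\Q$.''
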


\begin{proof}
%We shall apply Lemma~\ref{lemma:intersection}, with $A$ and~$B$ as above. \\
{\em Claim~1:} $H_1(A,\Z)  = H[I_e]$.
\begin{proof}
By Lemma~\ref{lem:annh1},
it suffices to show that $I_A = I_e$.
It follows from 
Lemma~3.10 in~\cite{parent} that
$I_e \subseteq I_A$. We next prove the reverse inclusion.
Let $t \in \T$.
%Let $T^\perp_{[f]}$ denote the set of Galois conjugacy 
%classes of newforms of level dividing~$N$ other than
%those in~$[f]$.
By the first statement in the proof
of Lemma~3.10 in~\cite{parent}, 
if $f$ is a newform with analytic rank
greater than zero of level~$M$ dividing~$N$, and
$g \in S'_f$, then $\langle e, g \rangle = 0$; 
hence $\langle t e, f \rangle = \langle e, tf \rangle = 0$.
If moreover $t \in I_A$, then for all newforms~$g$ of analytic
rank zero, 
$\langle t e, g \rangle = \langle e, tg \rangle = \langle e, 0 \rangle = 0$.
Thus $te$ is orthogonal to all of $S_2(\Gamma_0(N),\C)$,
and so $te = 0$, i.e., $t \in I_e$, which shows that $I_A \subseteq I_e$,
and we are done.
\end{proof}

Recall that $K$ denotes the kernel of~$\pi_*: H_1(J_0(N),\Q) \ra H_1(A_f,\Q)$
restricted to~$H$.\\

\noindent{\em Claim~2:} $H_1(B,\Z)  = K$.
\begin{proof}
%hus $I_A = I_e$, and so 
%Hence by Lemma~\ref{lem:annh1},
%$H_1(A,\Z) = H[I_e]$. 
Now $B$ is easily seen to be
the kernel of the map $J \ra A_f$, and so we have
a short exact sequence
$0 \ra B \ra J \ra A_f \ra 0$, and part of the associated
long exact sequence of homology is:
$$\ldots \ra H_1(B,\Z) \ra H_1(J,\Z)
\stackrel{\pi_*}{\ra} H_1(A_f,\Z) \ra 0 \ra \ldots$$
Hence it is clear that $H_1(B,\Z) \subseteq K$.
Since $H_1(B,\Z)$ is saturated in~$H$, it suffices
to show that $H_1(B,\Z) \tensor \Q = K \tensor \Q$,
i.e., that $H_1(B,\Z)$ and~$K$ have the same rank.
%Since $A_f$ is connected, we have $H_1(B,\Z) = K$.
If $G$
is a free abelian group, then we denote its rank by~$\rank(G)$.
Now 
\begin{tabbing}
\= $\rank(K) = 2 \cdot \dim J - 2 \cdot \dim A_f
= 2 \cdot \dim_\Q S_2(\Gamma_0(N),\Q) - 2 \cdot \dim_\Q S_{[f]}$ \\ 
\> $ = 2 \cdot \dim_\Q S_{T_2} = 2 \cdot \dim_\Q B = \rank(H_1(B,\Z))$.
\end{tabbing} This proves the claim.
\end{proof}

Also, $(A \cap B)^0 = C$, and so
$(A \cap B)^c = \Mid (A/C) \cap (B/C) \miD$.
The desired result now follows 
by Lemma~\ref{lemma:intersection} in view of the two claims above.
\end{proof}

\section{Relating the intersection to certain congruences}
\label{sec:inttocong}

If $T$ is a subset of the set of Galois conjugacy classes of newforms of
some level dividing~$N$, then let $T'$ denote its complement.
For simplicity, we will write $\IA$ for~$I_{T'_1}$,
$\IB$ for~$I_{T'_2}$, and $\IC$ for~$I_{T'_3}$, as well as
$\SA$ for~$S_{T'_1}$,
$\SB$ for~$S_{T'_2}$, and $\SC$ for~$S_{T'_3}$.
Note that since $T'_2$ is the set of Galois conjugacy classes of 
newforms of level dividing~$N$ that are in~$[f]$, we have
$\IB = I_f$.
Let $S$ be short for $S_2(\Gamma_0(N),\Z)$.
Recall that the exponent of a finite group~$G$ is the smallest
positive integer that annihilates~$G$.
\begin{lem} \label{prop:inter_to_cong}
The exponent of the group
$ (A/C) \cap (B/C)$
divides the exponent of the group
$\frac{S[\IC]}{S[\IB] + S[\IA]}$.
Moreover, if $\ell$ is a prime such that $\ell^2 \nmid N$,
then $\ell$ does not divide the ratio of the exponent
of $\frac{S[\IC]}{S[\IB] + S[\IA]}$ to the exponent
of~$ (A/C) \cap (B/C)$.
\end{lem}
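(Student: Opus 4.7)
The plan is to transfer the intersection computation from the geometric side to the cusp-form side by means of a Hecke-equivariant duality between the homology lattice $H$ and the integral cusp form lattice $S$. By Lemma~\ref{prop:intersect}, $\Mid (A/C)\cap (B/C)\miD = |M|$ with $M := H/(H[I_A] + H[I_B])$, so the task reduces to comparing $\exp(M)$ with $\exp(N_S)$, where $N_S := S[\IC]/(S[\IA]+S[\IB])$. First I would reduce to the $+$-eigenpart $H^+$: complex conjugation commutes with $\T$ and so preserves each of $I_A, I_B$, giving a $\pm$-splitting of $M$ up to $2$-torsion, with both pieces having the same exponent as $M$ up to $\ell=2$ (this is absorbed into the lemma's tolerance when $4\mid N$ and can be handled by parity symmetry otherwise, as both $H^+$ and $H^-$ are free of rank one over $\T\otimes\Q$).

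Next I would invoke the integration pairing $H^+\times S \to \R$, which is $\T$-equivariant and non-degenerate over $\Q$. The integrality results of~\cite{agst:manin} on the Manin-type constant imply this pairing is $\Z$-perfect away from $2$ and from primes $\ell$ with $\ell^2\mid N$. Away from those bad primes, for every ideal $I\subseteq\T$ the subgroup $H^+[I]\subseteq H^+$ is exactly the annihilator of $I\cdot S$ in $H^+$; correspondingly, the standard dual-lattice identity $L_2/L_1 \cong L_1^\ast/L_2^\ast$ (for a finite-index inclusion $L_1\subseteq L_2$) converts $H^+/(H^+[I_A]+H^+[I_B])$ into a Pontryagin-dual quotient inside $S$.

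To identify that dual quotient with $N_S$, I would exploit the fact that $\T$ acts faithfully on $S$: since $\IA\cdot I_A$ annihilates both $S_{T_1}$ (via $I_A$) and $S_{T'_1}$ (via $\IA$), it annihilates all of $S$, hence vanishes in $\T$. Therefore $I_A S\subseteq S[\IA]$ and, symmetrically, $I_B S\subseteq S[\IB]$, with equality holding after tensoring with $\Q$. Tracing through the duality of the previous paragraph, the Pontryagin dual of $M$ becomes $S[\IC]/(S[\IA]+S[\IB])=N_S$ away from primes $\ell$ with $\ell^2\mid N$, so the two finite groups have equal exponent there. The unconditional divisibility $\exp(M)\mid\exp(N_S)$ follows because the failure of the integral duality at the bad primes can only enlarge $N_S$ relative to $M$, never the other way.

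I expect the principal difficulty to lie in the precise integrality accounting: one must verify that the failure of perfectness of the pairing $H^+\times S\to\Z$ and the non-triviality of the congruence modules $S[\IA]/I_A S$ and $S[\IB]/I_B S$ are both supported exactly at the primes $\ell$ with $\ell^2\mid N$. Both phenomena should be controlled by~\cite{agst:manin} via the Manin constant, together with a careful Hecke-theoretic analysis of the $\T\otimes\Q$-decomposition $S\otimes\Q=\bigoplus_{[g]}S_{[g]}$.
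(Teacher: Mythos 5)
Your plan diverges from the paper's in a way that I think contains a genuine gap, and the gap is at the very center of the argument: the pairing you want to use does not exist in the form you need.

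The paper's proof never touches homology. After invoking Lemma~\ref{prop:intersect}, it works entirely with the Hecke algebra $\T$ and the integral cusp form lattice $S$, using the \emph{$q$-expansion pairing} $\T\times S\to\Z$, $(T,g)\mapsto a_1(Tg)$. This pairing is honestly $\Z$-valued and $\Z$-perfect unconditionally; the Claim in the paper's proof shows that it descends to a perfect pairing $\T/\ICp\times S[\IC]\to\Z$, from which one obtains the identifications $\Hom(\TCp/\Ann_{\TCp}(A/C),\Z)=S[\IB]$ and $\Hom(\TCp/\Ann_{\TCp}(B/C),\Z)=S[\IA]$. The lemma then follows by transplanting the modular-exponent-divides-congruence-exponent machinery of~\cite{ars:moddeg} to the quotient $J_0(N)/C$. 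The second part (primes $\ell$ with $\ell^2\nmid N$) comes from the saturation argument $\T=\T'$ at maximal ideals over $\ell$ containing $I_f$, again as in~\cite[\S5.1]{ars:moddeg}; the Manin constant enters that proof, but only through the newform quotient $A_f$.

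In contrast, you propose to work directly with a pairing $H^+\times S\to\R$ obtained by integrating cusp forms over cycles, and you assert that the Manin-constant results of~\cite{agst:manin} make this pairing ``$\Z$-perfect away from $2$ and from primes $\ell$ with $\ell^2\mid N$.'' This is the gap. The integration pairing takes values in $\R$ (or $\C$), not in $\Z$, and its image is a lattice of transcendental periods; the notion of it being ``$\Z$-perfect'' is not meaningful without fixing a $\Z$-structure on the target, and no such structure is supplied. What~\cite{agst:manin} controls is the Manin constant of the single newform quotient $A_f$, i.e.\ the comparison between the N\'eron differential on $\mathcal{A}_f$ and the pullback of the normalized eigenform; it does not control a lattice comparison between all of $H^+$ and all of $S$. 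If what you had in mind is the multiplicity-one statement that $H^+\tensor\Z_\ell$ is free of rank one over $\T\tensor\Z_\ell$ (which would give a Hecke-linear duality $H^+\tensor\Z_\ell\cong\Hom(\T,\Z)\tensor\Z_\ell$), that is also not a consequence of~\cite{agst:manin}, and it fails at certain Eisenstein or non-multiplicity-one primes whose description is not simply ``$\ell^2\mid N$''. In short, the bad set of primes you would need to exclude is not the one the lemma allows you to exclude.

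A secondary issue: you reduce to the $+$-eigenpart at the start, paying a price in $2$-torsion. The lemma's second statement applies to $\ell=2$ whenever $4\nmid N$, so you cannot afford an unqualified loss of $2$-power factors; the paper avoids this entirely by never passing to the $+$-part inside this lemma (the $+/-$ bookkeeping is relegated to Lemma~\ref{prop:intersect} and to the downstream corollaries, where the $2$-ambiguity is explicitly tolerated). Your observation that $I_A\IA=0$ in $\T$, hence $I_AS\subseteq S[\IA]$, is correct and in the right spirit, but it does not by itself repair the missing integrality input.
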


\begin{proof}
Both parts of the lemma are generalizations of results in~\cite{ars:moddeg}.

The proof of the first part is a generalization 
of the proof of
Theorem~3.6(a) in~\cite{ars:moddeg} which says that the modular exponent
divides the congruence exponent. 
%We only indicate the changes that have to be
%made in Sections 3--4 of~\cite{ars:moddeg}: replace
%$J$ by~$J_0(N)/C$, $A$ by~$A/C$,
%$B$ by~$B/C$, $\T$ by~$\TCp = \T / \ICp$.
Note that $\End J_0(N)$ preserves~$A$, $B$, and~$C$,
and hence~$A/C$ and~$B/C$,
and that the image
of~$\T$ acting on~$J_0(N)/C$ is~$\TCp$
(since $\Ann_{\T \tensor \Q} J/C =
 \Ann_{\T \tensor \Q} S_2(\Q)/ S_C =
 \Ann_{\T \tensor \Q} \SC$).
%Let $S_2(\Z)$ be short for $S_2(\Gamma_0(N),\Z)$.
We have a perfect $\T$-equivariant bilinear pairing $\T\times
S \to\Z$ given by $(t,g)\mapsto a_1(t(g))$.\\

\noindent{\em Claim:} The induced pairing
\begin{eqnarray} \label{eqn:inducedpairing}
\T/ \ICp \times S[\IC] \ra \Z 
\end{eqnarray}
is perfect. 
\begin{proof}
We follow the proof of Lemma~\ref{perfpair4}, and the only
thing that one has to show differently is that 
the map $\T/ \ICp \ra \Hom(S[\IC],\Z)$
is injective. %If it were not injective, let 
Suppose the image of~$T \in \T$ in~$\T/\ICp$ is in the kernel of this map.
Then if $h \in S[\IC]$, we have $a_1(h \divs T) = 0$.
But then $a_n(h \divs T) = a_1((h\divs T) \divs T_n) =
a_1((h\divs T_n) \divs T) =  0$ for all $n$
(considering that $h \divs T_n \in S[\IC]$),
and hence $h \divs T = 0$ for all $h \in S[\IC]$. 
Hence $T$ annihilates $S[\IC] = S \cap \SC$, and 
so it annihilates~$\SC$.
Thus $T \in \ICp$, which proves the injectivity.
\end{proof}
Using this pairing, $\Hom(\TCp/ \Ann_\TCp(A/C), \Z)$ may be viewed
as a saturated subgroup of~$S[\IC]$. Now
$\Ann_{\TCp \tensor \Q} A/C
= \Ann_{\TCp \tensor \Q} S_A/S_C = I_f/\ICp \tensor \Q
= \IB/\ICp \tensor \Q$ and
thus on tensoring with~$\Q$, $\TCp/ \Ann_\TCp(A/C)$ is dual,
under the pairing~(\ref{eqn:inducedpairing}),
to~$S[\IB]$, which is itself saturated in~$S[\IC]$.
Hence 
$$\Hom(\TCp/ \Ann_\TCp(A/C), \Z) = S[\IB].$$
Similarly, $$\Hom(\TCp/ \Ann_\TCp(B/C), \Z) = S[\IA].$$
%$\Ann_{\TCp \tensor \Q} B/C
%= \Ann_{\TCp \tensor \Q} S_2/S_3 = I_1/\ICp \tensor \Q$. 
Bearing all this in mind, and 
making the following changes in
Sections 3--4 of~\cite{ars:moddeg}: 
replace
$J$ by~$J_0(N)/C$, $A$ by~$A/C$,
$B$ by~$B/C$, $\T$ by~$\TCp = \T / \ICp$,
the proof in loc. cit. 
that the modular exponent divides the congruence exponent 
(with the changes mentioned above)
gives us the first statement in the lemma.

The proof of the second part of the lemma is a generalization of
the proof of Theorem~3.6(a) in~\cite{ars:moddeg}, which says
that if $\ell$ is a prime
such that $\ell^2 \nmid N$, then $\ell$ does not divide
the ratio of the congruence exponent to the modular exponent,
as we now explain.
Let $\T_{A/C}$ and $\T_{B/C}$ denote the images of~$\TCp$ 
acting on~$A/C$ and~$B/C$ respectively.
Let $\pi_{A/C}$ and~$\pi_{B/C}$ denote the %kernels of the 
maps $\TCp \ra \T_{A/C}$ and $\TCp \ra \T_{B/C}$ respectively.
Let $\RCp = \pi_{A/C}( \ker (\pi_{B/C}))$ 
and let $\SCp$ be the annihilator in~$\T_{A/C}$ of~$(A/C) \cap (B/C)$.
%The replacements to be made in are: replace
%are analogs of $R$ and~$S$ in~\cite[\S5.1]{ars:moddeg},
By a reasoning as in~\cite[\S5.1]{ars:moddeg}, except
replacing 
$J$ by~$J_0(N)/C$, $A$ by~$A/C$,
$B$ by~$B/C$, $\T$ by~$\TCp$, 
$R$ by~$\RCp$, and $S$ by~$\SCp$ in loc. cit., we get
the following:
$\RCp \subseteq \SCp$, 
the exponent
of $\frac{S[\IC]}{S[\IB] + S[\IA]}$ is the exponent of 
$\T_{A/C}/\RCp$, and multiplication by the exponent
of~$(A/C) \cap (B/C)$ annihilates $\T_{A/C}/\SCp$.
%degree, then $\ell^2$ divides the conductor of the elliptic curve. 
Let $\T'_{\scriptscriptstyle{C'}}$ be the saturation of~$\TCp$ in~$\End(J/C)$.
Then in a manner similar to loc. cit. (with the changes mentioned above),
we get an injection 
$\SCp/\RCp \hookrightarrow \T'_{\scriptscriptstyle{C'}}/\TCp$ 
and to prove the second part of our lemma, 
%As in~\cite[\S5.1]{ars:moddeg}, to show the second part
%of the lemma, it suffice to show that 
it suffices to show %$R_{C'} = S_{C'}$, i.e., 
that $\TCp = \T'_{\scriptscriptstyle{C'}}$ locally
at all maximal ideals of $\TCp$ with residue characteristic~$\ell$
that contain the annihilator of~$A/C$,
which is $I_f/\ICp$
(since $\Ann_{\T \tensor \Q} A/C =
 \Ann_{\T \tensor \Q} S_A/ S_C =
 \Ann_{\T \tensor \Q} S_{[f]}$).
Considering that $\TCp = \T / \ICp$,
it suffices to show that $\T = \T'$ for
all maximal ideals of~$\T$ with residue characteristic~$\ell$
which contain~$I_f$ (where $\T'$ is the
saturation of~$\T$ in~$\End J$). But this 
is proved in~\cite[\S5.1]{ars:moddeg}. 
%\edit{Now $A_f \ra J_A/ J_C$ is an isogeny, so $\Ann_{\T_C} J_A/ J_C
%= \im(I_A) = (I_A + I_C)/I_C$
%is the image of $\Ann_\T A_f = I_f$. So suffices to check
%if $I_f \subseteq \m$, etc.}
\end{proof}

If $g$ and $h$ are eigenforms in~$S_2(\Gamma_0(N), \C)$
and $\qq$ is an ideal in the ring of integers of a number
field that contains all the Fourier coefficients of~$g$ and~$h$,
then we will say that $g$ is congruent to~$h$ 
modulo~$\qq$ and write $g \equiv h \pmod \qq$ 
if $a_n(g) \equiv a_n(h) \pmod \qq$ for all $n \in \N$, where
as usual if $g' \in S_2(\Gamma_0(N), \C)$, then 
$a_n(g')$ denotes the $n$-th Fourier coefficient of~$g'$.
%\edit{decide if ideal in ring of ints or max ideal of Hecke alg
%in following lemmas and DSW}

\begin{lem} \label{lem:congr}
A prime~$q$ divides the order of the group
$$\frac{S[\IC]}{S[\IB] + S[\IA]}$$ if and only if
there is a normalized eigenform~$g \in S_2(\Gamma_0(N), \C)$ with
$L(A_g,1) = 0$ such that  
$g$ is congruent to~$f$ modulo a prime ideal $\qq$
over~$q$ 
in the ring of integers
of the number field generated by the Fourier coefficients
of~$f$ and~$g$.
%with residue characteristic~$q$.
%in the ring of integers of a number
%field that contains all the Fourier coefficients of~$f$ and~$g$.
\end{lem}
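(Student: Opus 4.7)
The plan is to rewrite the quotient $S[\IC]/(S[\IB] + S[\IA])$ as a quotient inside the Hecke algebra by duality, and then interpret its support in terms of congruences between eigenforms.

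First I would check that $\IC = I_f \cap \IA$: the inclusions $\IC \subseteq I_f$ and $\IC \subseteq \IA$ are immediate, while conversely $I_f \cap \IA$ annihilates both $S_{[f]}$ and $S_{T_1'}$, hence annihilates $S_{T_3'} = S_{[f]} \oplus S_{T_1'}$, so by Lemma~\ref{lem:ann} it lies in $\IC$. Next, the argument of the Claim in the proof of Lemma~\ref{prop:inter_to_cong} applies verbatim with $\IC$ replaced by $I_f$ and by $\IA$, yielding a perfect pairing $\T/J \times S[J] \to \Z$ for each $J$ in turn. Dualizing the inclusion
$$S[\IB] \oplus S[\IA] \;\hookrightarrow\; S[\IC]$$
(injective because $S_{[f]} \cap S_{T_1'} = 0$, of full rank because $S_{T_3'} = S_{[f]} \oplus S_{T_1'}$) produces the diagonal map $\Delta: \T/\IC \hookrightarrow \T/I_f \oplus \T/\IA$, whose injectivity is exactly $\IC = I_f \cap \IA$. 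Since dualization preserves the order of the finite cokernel of a finite-index inclusion of $\Z$-lattices,
$$\bigl| S[\IC]/(S[\IB] + S[\IA]) \bigr| \;=\; \bigl| (\T/I_f \oplus \T/\IA)/\Delta(\T/\IC) \bigr|,$$
and the standard isomorphism $(a,b) \mapsto a - b$ identifies the right-hand side with $\T/(I_f + \IA)$.

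The final step is to translate primes dividing $|\T/(I_f + \IA)|$ into congruences. Such a prime $q$ divides this order iff there is a maximal ideal $\qq \subset \T$ of residue characteristic $q$ containing $I_f + \IA$. Under the ring isomorphism $\T/I_f \cong \Z[f]$ (sending $T_n$ to $a_n(f)$) and the finite-index embedding $\T/\IA \hookrightarrow \prod_{[g] \in T_1'} \Z[g]$, the condition $\qq \supseteq I_f$ says that the eigensystem $\theta_\qq: \T \to \overline{\F}_q$ attached to $\qq$ is the reduction of the eigensystem of $f$ modulo some prime $\qq_f$ of $\Z[f]$ over $q$, while $\qq \supseteq \IA$ says that $\theta_\qq$ coincides with the reduction modulo some prime $\qq_g$ of $\Z[g]$ of the eigensystem of a newform $g$ of level dividing $N$ and of positive analytic rank. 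Since Galois-conjugate newforms share the order of vanishing of their $L$-functions at $s=1$, positive analytic rank of $g$ is equivalent to $L(A_g,1) = 0$. The agreement of the two reductions is exactly the congruence $f \equiv g \pmod \qq$ of the lemma statement; the converse direction is immediate, for any such congruence produces a mod-$q$ eigensystem through which $\T$ factors, and hence a maximal ideal of $\T$ containing both $I_f$ and $\IA$.

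The only technical point worth flagging is the integral perfectness of the pairings $\T/J \times S[J] \to \Z$ for $J = \IA$ and $J = \IC$; the argument of Lemma~\ref{perfpair4}, extended as in the Claim in the proof of Lemma~\ref{prop:inter_to_cong}, should go through unchanged, so this is a routine verification rather than a genuine obstacle.
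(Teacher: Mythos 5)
Your dualization idea is a genuinely different route from the paper's, which simply invokes Ribet's argument from \cite[\S1]{ribet:modp} with $X = S[\IB]\tensor\C$, $Y=S[\IA]\tensor\C$ inside $S_2(\C)[\IC]$; Ribet's method produces the eigenforms directly by looking at a common mod-$q$ eigenvector of $\T$ on the two summands, whereas you pass through the Hecke algebra. Your reduction is clean and correct up to the formula
$$\bigl| S[\IC]/(S[\IB] + S[\IA]) \bigr| = \bigl|\T/(I_f + \IA)\bigr|:$$
the identity $\IC = I_f\cap\IA$, the extension of Lemma~\ref{perfpair4} to $J=\IA$ and $J=\IC$, the duality bookkeeping, and the snake-type identification of $\operatorname{coker}\Delta$ with $\T/(I_f+\IA)$ all check out. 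This is a nice reformulation and would be worth recording even independently of the lemma.

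The gap is in the last translation step. You assert a finite-index ring embedding $\T/\IA\hookrightarrow \prod_{[g]\in T_1'}\Z[g]$, where $T_1'$ indexes Galois orbits of \emph{newforms} of level dividing $N$ and positive analytic rank. This does not hold unless $N$ is prime. The algebra $\T/\IA$ records the action of the full Hecke algebra, including the $U_p$ for $p\mid N$, on the space $\SA$ of (generally old) forms; on the span of the oldforms attached to a newform $g$ of level $M < N$, the operator $U_p$ for $p\mid N/M$ acts via a $2\times 2$ (or larger) block with characteristic polynomial $X^2 - a_p(g)X + p$, whose roots need not lie in $\Z[g]$. Hence a maximal ideal $\qq\supseteq\IA$ encodes a mod-$q$ eigensystem of the full algebra, and the minimal primes of $\T/\IA$ it contains correspond to normalized eigenforms in $\SA\tensor\C$ of level $N$ (i.e.\ $p$-stabilizations of lower-level newforms), not to the newforms themselves. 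Consequently "the reduction of the eigensystem of a newform $g$ of level dividing $N$" is the wrong object: the lemma demands a normalized eigenform $g\in S_2(\Gamma_0(N),\C)$ and a congruence $a_n(f)\equiv a_n(g)$ for \emph{all} $n$, including $n$ divisible by $p\mid N$. To repair this, pick a minimal prime $\pp\subseteq\qq$ of $\T$ containing $\IA$; then $\T/\pp$ embeds in a number field and corresponds (after choosing an embedding into $\C$) to a normalized eigenform $g\in\SA\tensor\C$, and both $f$ and $g$ have eigensystems reducing to $\theta_\qq$. One must then also verify $L(g,1)=0$: this is not immediate from "the associated newform has positive analytic rank," but follows because $L(g,s)$ differs from the newform's $L$-function by Euler factors $(1-\beta_p p^{-s})$ with $|\beta_p|\le\sqrt{p}$, which do not vanish at $s=1$. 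Finally, one should make sure the resulting prime of $\T$ can be upgraded to a prime ideal of the ring of integers of the number field generated by the coefficients of $f$ and $g$, as required by the lemma's statement; this is the "prime of fusion" point raised at the start of Section~\ref{section:visibility}. None of these are fatal, but as written the last paragraph of your proof does not establish the lemma for composite $N$.
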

\begin{proof}
This follows from a slight modification of~\cite[\S1]{ribet:modp} and 
we only indicate the changes that need to be made in loc.~cit.
%Let $S_\Q = S_2(\Gamma_0(N),\Q)$, and 
Replacing $S$ by $S_2(\C)[\ICp]$ and letting $X = S[\IB] \tensor \C$
and $Y = S[\IA] \tensor \C$ in loc. cit., by the discussion
on p.~194-196 of loc. cit., there
exist normalized eigenforms $f' \in S[\IB] = S[I_f]$ and $g' \in S[\IA]$
such that $f' \equiv g'$ modulo 
a prime ideal over~$q$
in the ring of integers
of the number field generated by the Fourier coefficients
of~$f'$ and~$g'$.
%with residue characteristic~$q$.
By the definitions of $\ICp$, $\IA$, and $\IB = I_f$, 
it is clear that  $f'$ is a Galois conjugate
of~$f$ by some~$\sigma \in {\rm Gal}(\Qbar/\Q)$, and $L(g',1) = 0$.
Applying the inverse of $\sigma$, and letting $g = \sigma^{-1}(g')$,
we get the statement in the lemma.
\end{proof}

\begin{prop} \label{prop:ord_to_congr}
If an odd prime
$q$ divides ${\Mid \frac{H^+}{H[I_e]^+ + K^+} \miD}$,
then there is a normalized eigenform~$g \in S_2(\Gamma_0(N), \C)$ with
$L(g,1) = 0$ such that $g$ is congruent to~$f$ modulo 
a prime ideal over~$q$
in the ring of integers
of the number field generated by the Fourier coefficients
of~$f$ and~$g$.
%a maximal ideal of~$\T$ with residue characteristic~$q$.
%a prime ideal lying over~$q$.
\end{prop}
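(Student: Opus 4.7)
The plan is to chain together the three key results of Sections~\ref{section:fac_to_congr} and~\ref{sec:inttocong}: Lemma~\ref{prop:intersect} (the equality $|H/(H[I_e]+K)| = |(A/C) \cap (B/C)|$), Lemma~\ref{prop:inter_to_cong} (the exponent of the intersection divides the exponent of a certain quotient of modular forms), and Lemma~\ref{lem:congr} (the prime divisors of that quotient correspond to congruences). Three of the four steps are essentially a reading-off; the only subtlety is that the statement is phrased in terms of the plus-part, whereas Lemma~\ref{prop:intersect} concerns the full module without the plus decoration.

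First I would pass from $H^+/(H[I_e]^+ + K^+)$ to $H/(H[I_e]+K)$. Complex conjugation acts on all groups involved and, since $q$ is odd, I may work localized away from $2$, where the plus- and minus-part functors are exact. The natural inclusion $H[I_e]^+ + K^+ \subseteq (H[I_e]+K) \cap H^+$ induces a map
\[
\frac{H^+}{H[I_e]^+ + K^+} \longrightarrow \frac{H^+}{(H[I_e]+K) \cap H^+} \hookrightarrow \frac{H}{H[I_e]+K},
\]
whose kernel is a quotient of $((H[I_e]+K)\cap H^+)/(H[I_e]^+ + K^+)$, which is killed by $2$ since it sits inside a group isomorphic to a subgroup of $(H[I_e]+K)/(H[I_e]^+ + K^+ \oplus \text{minus-parts})$. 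Hence the $q$-parts agree, and $q$ divides $|H/(H[I_e]+K)|$.

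Next I invoke Lemma~\ref{prop:intersect} to convert this into $q \mid |(A/C) \cap (B/C)|$, and then observe that for any finite abelian group the prime divisors of the order coincide with the prime divisors of the exponent, so $q$ divides the exponent of $(A/C)\cap(B/C)$. Applying the first half of Lemma~\ref{prop:inter_to_cong} (the statement that the exponent of the intersection divides the exponent of $S[\IC]/(S[\IB]+S[\IA])$) yields that $q$ divides this latter exponent, hence divides $|S[\IC]/(S[\IB]+S[\IA])|$.

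Finally I apply Lemma~\ref{lem:congr}, which produces a normalized eigenform $g \in S_2(\Gamma_0(N),\C)$ with $L(A_g,1) = 0$, congruent to $f$ modulo a prime over $q$. Since $L(A_g,s)$ factors as a product of $L(g^\sigma,s)$ over Galois conjugates, we have $L(g^\sigma,1) = 0$ for some $\sigma$; replacing $g$ by $g^\sigma$ (and the prime ideal by its image under $\sigma$) gives the required eigenform with $L(g,1) = 0$. The main obstacle in the whole argument is the bookkeeping in the first step, where the plus-part of a sum is not exactly the sum of plus-parts; but since $q$ is odd, the discrepancy is controlled by $2$-torsion and vanishes upon localization at $q$.
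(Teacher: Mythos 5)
Your chain of lemmas---Lemma~\ref{prop:intersect}, then the first half of Lemma~\ref{prop:inter_to_cong}, then Lemma~\ref{lem:congr}---is exactly the route the paper takes, and the overall structure is correct. The first step is right (though longer than necessary: after inverting~$2$ the plus-part functor is exact, so $H^+/(H[I_e]^+ + K^+)$ is a direct summand of $H/(H[I_e]+K)$ localized at~$q$, and the claim is immediate).

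There is, however, a genuine wrinkle in your final step. You propose to pass from $L(A_g,1)=0$ to $L(g,1)=0$ by replacing $g$ with a Galois conjugate $g^\sigma$ for which $L(g^\sigma,1)=0$, and replacing the prime $\qq$ by $\sigma(\qq)$. But applying $\sigma$ to the congruence $g\equiv f\pmod\qq$ gives $g^\sigma\equiv f^\sigma\pmod{\sigma(\qq)}$, not $g^\sigma\equiv f$. Since $f$ itself is not replaced (it is the fixed newform of the proposition), the Galois-conjugate swap does not preserve the required congruence with~$f$. The correct fix is not to swap at all but to invoke the Galois-equivariance of the algebraic part of the special $L$-value (Shimura): $L(g,1)/\Omega_g^+$ lies in the field of Fourier coefficients of~$g$ and is Galois-equivariant, so $L(g^\sigma,1)=0$ for one $\sigma$ forces $L(g,1)=0$ for the original~$g$. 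Equivalently, one can observe that the $g$ produced in the proof of Lemma~\ref{lem:congr} is $\sigma^{-1}(g')$ where $g'\in S[\IA]$ is an eigenform of positive analytic rank, so $L(g',1)=0$, and by the same Galois-equivariance $L(g,1)=0$ already; the lemma's stated conclusion ``$L(A_g,1)=0$'' is simply weaker than what its proof delivers, and the proposition needs that stronger output. So the idea is salvageable, but not by the conjugate-and-transport argument you wrote.
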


\begin{proof}
Note that ${\Mid \frac{H^+}{H[I_e]^+ + K^+} \miD}$ differs from
${\Mid \frac{H}{H[I_e] + K} \miD}$ only in powers of~$2$.
The proposition now follows from 
Lemmas~\ref{prop:intersect}, \ref{prop:inter_to_cong},
and~\ref{lem:congr}.
\end{proof}

\begin{prop} \label{prop:conr_divs_Lvalue}
Let $q$ be an odd prime such that $q^2 \ndiv N$.
Suppose that there is a 
%$f$ is congruent to
%modulo a prime ideal over~$q$ to 
normalized eigenform~$g \in S_2(\Gamma_0(N), \C)$ with
$L(g,1) = 0$ such that $g$ 
is congruent to~$f$
modulo a prime ideal over~$q$
in the ring of integers
of the number field generated by the Fourier coefficients
of~$f$ and~$g$. 
%Suppose that there is a 
%normalized eigenform~$g \in S_2(\Gamma_0(N), \C)$ 
%that is congruent to~$f$
%modulo a prime ideal over~$q$
%in the ring of integers
%of the number field generated by the Fourier coefficients
%of~$f$ and~$g$, with the property that $L(A_g,1) = 0$.
Then $q$ divides ${\Mid \frac{H^+}{H[I_e]^+ + K^+} \miD}$.
Suppose moreover that $q$ does not divide $\Mid A_f(\Q)_{\rm tor} \miD$.
Then 
$q$ divides $\LAf(1) / \OAf$, and in particular,
$\frac{\LAf(1)}{\OAf} \equiv 
\frac{{{L_{\scriptscriptstyle{A_g}}}}(1)}
{{\Omega_{\scriptscriptstyle{A_g}}}} \bmod q$.
\end{prop}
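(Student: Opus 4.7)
The plan is to reverse the direction of Proposition~\ref{prop:ord_to_congr} using the bidirectional nature of the underlying lemmas, and then feed the resulting divisibility into the formula of Theorem~\ref{thm:mainform}.

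For the first assertion (that $q$ divides $|H^+/(H[I_e]^+ + K^+)|$), I proceed in three steps. First, Lemma~\ref{lem:congr} is formulated as an ``if and only if,'' so the hypothesized congruence $g \equiv f$ modulo a prime over $q$, together with $L(g,1)=0$, implies that $q$ divides the order (and hence the exponent) of $S[\IC]/(S[\IB] + S[\IA])$. Next, Lemma~\ref{prop:inter_to_cong} asserts that the exponent of $(A/C)\cap(B/C)$ divides the exponent of this quotient and that no prime $\ell$ with $\ell^2 \nmid N$ divides their ratio; since $q^2 \nmid N$, we conclude that $q$ divides the exponent, and therefore the order, of $(A/C)\cap(B/C)$. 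Then Lemma~\ref{prop:intersect} identifies this order with $|H/(H[I_e]+K)|$. Finally, as observed in the proof of Proposition~\ref{prop:ord_to_congr}, $|H^+/(H[I_e]^+ + K^+)|$ differs from $|H/(H[I_e]+K)|$ only by a power of $2$, and since $q$ is odd we obtain the required divisibility.

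For the second assertion, I apply Theorem~\ref{thm:mainform}, which gives, up to a power of $2$,
$$\caf \cdot c_\infty(A_f) \cdot \frac{\LAf(1)}{\OAf} = \frac{\left|\frac{H^+}{H[I_e]^+ + K^+}\right| \cdot \left|\frac{H[I_e]^+}{\Im e + H[I_e]^+ \cap K^+}\right|}{|\pi_*(\T e)/\pi_*(\Im e)|}.$$
The numerator is divisible by $q$ by the first part. For the denominator, Lemma~\ref{lem:divs} implies that $|\pi_*(\T e)/\pi_*(\Im e)|$ divides $|A_f(\Q)|$, which equals $|A_f(\Q)_{\rm tor}|$ because the standing assumption $\LAf(1)\neq 0$ forces $A_f$ to have Mordell-Weil rank zero by~\cite{kollog:finiteness}; by hypothesis this is coprime to $q$. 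On the left-hand side, $c_\infty(A_f)$ is a power of $2$, and by~\cite{agst:manin} any prime divisor of $\caf$ is $2$ or has square dividing $N$, so $q \nmid \caf$. Comparing $q$-adic valuations yields $q \mid \LAf(1)/\OAf$. The final congruence statement is automatic: since $L_{A_g}(s)=\prod_\sigma L(g^\sigma,s)$ by~\cite[Thm.~7.14]{shimura:intro} and $L(g,1)=0$, we have $L_{A_g}(1)=0$, so both sides vanish modulo $q$.

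I do not anticipate a serious obstacle: the argument is bookkeeping, running the chain ``congruence $\Leftrightarrow$ $q \mid |S[\IC]/(S[\IB]+S[\IA])|$ $\Leftrightarrow$ $q \mid |(A/C)\cap(B/C)|$ $\Leftrightarrow$ $q \mid |H/(H[I_e]+K)|$'' in the reverse direction from Proposition~\ref{prop:ord_to_congr}, then reading off the $q$-part from Theorem~\ref{thm:mainform}. The only small subtlety is that Lemma~\ref{prop:inter_to_cong} controls exponents rather than orders, but a prime divides the exponent of a finite abelian group if and only if it divides its order, so this causes no trouble.
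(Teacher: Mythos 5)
Your proof is correct and follows essentially the same approach as the paper's own proof: Lemma~\ref{lem:congr} to get $q \mid |S[\IC]/(S[\IB]+S[\IA])|$, the second statement of Lemma~\ref{prop:inter_to_cong} to transfer the divisibility to the exponent of $(A/C)\cap(B/C)$ (using $q^2 \nmid N$), Lemma~\ref{prop:intersect} to identify the order with $|H/(H[I_e]+K)|$, and the oddness of $q$ for the passage to plus parts; then Theorem~\ref{thm:mainform} together with Lemma~\ref{lem:divs}, the bounds on $\caf$ and $c_\infty(A_f)$, and $L_{A_g}(1)=0$ for the second assertion. Your write-up is in fact slightly more explicit than the paper's, which cites only Lemma~\ref{prop:intersect} and the second part of Lemma~\ref{prop:inter_to_cong} for the first assertion and leaves the appeal to Lemma~\ref{lem:congr} (and the exponent-versus-order point) implicit.
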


\begin{proof}
\comment{
It is proved in~\cite{ars:moddeg}) that if $\ell$ is a prime
such that $\ell^2 \nmid N$, then $\ell$ does not divide
~$\ell$
divides the congruence exponent, but 

then $\ell$ divides the
modular exponent. % (actually, more is proved in~\cite{ars:moddeg}).
By a simple generalization of the proof,
it follows that if $q$ is as in the first statement of the
Proposition, then $q$ divides
$\mid (A/C) \cap (B/C) \mid$
(notation as in Lemma~\ref{prop:intersect}), as we indicate below.

Let $\T_{A/C}$ and $\T_{B/C}$ denote the images of~$\TCp$ 
acting on~$A/C$ and~$B/C$ respectively.
Let $\pi_{A/C}$ and~$\pi_{B/C}$ denote the %kernels of the 
maps $\TCp \ra \T_{A/C}$ and $\TCp \ra \T_{B/C}$ respectively.
Let $\RCp = \pi_{A/C}( \ker (\pi_{B/C}))$ 
and let $\SCp$ be the annihilator in~$\T_A$ of~$(A/C) \cap (B/C)$
%degree, then $\ell^2$ divides the conductor of the elliptic curve. 
Let $\T'_{\scriptscriptstyle{C'}}$ be the saturation of~$\TCp$ in~$\End(J/C)$.
Note that $\RCp$ and~$\SCp$
are analogs of $R$ and~$S$ in~\cite[\S5.1]{ars:moddeg}.

As in~\cite[\S5.1]{ars:moddeg}, to show that

it suffices to show %$R_{C'} = S_{C'}$, i.e., 
$\TCp = \T'_{\scriptscriptstyle{C'}}$ locally
at max ideal of $\TCp$ that contains the annihilator of~$A/C$,
which is $I_f/\ICp$.
For this in turn it suffices to show that $\T = \T'$ for
all maximal ideals which contain~$I_f$ (where $\T'$ is the
saturation of~$\T$ in~$\End J$). This follows as in ...
\edit{Now $A_f \ra J_A/ J_C$ is an isogeny, so $\Ann_{\T_C} J_A/ J_C
= \im(I_A) = (I_A + I_C)/I_C$
is the image of $\Ann_\T A_f = I_f$. So suffices to check
if $I_f \subseteq \m$, etc.}
}

The first part of the proposition
follows from Lemma~\ref{prop:intersect}, the second statement in
Lemma~\ref{prop:inter_to_cong}, and the fact that
${\Mid \frac{H^+}{H[I_e]^+ + K^+} \miD}$ differs from
${\Mid \frac{H}{H[I_e] + K} \miD}$ only in powers of~$2$.
%that $q$ is odd. 
The second part follows from 
Theorem~\ref{thm:mainform}, in view of the following facts:  
${\Mid \pi_*(\T  e) / \pi_*(\Im  e) \miD}$ divides 
$\Mid A_f(\Q)_{\rm tor} \miD$ (by Lemma~\ref{lem:divs}),
$q \ndiv c_\infty(A_f)$
(since $q$ is odd), $q \ndiv \caf$ 
(by~\cite{agst:manin}, considering that $q^2 \ndiv N$),
and $L_{\scriptscriptstyle{A_g}}(1) =0$ (since $L(g,1) = 0$).
\end{proof}

\comment{
\begin{prop} \label{prop:conr_divs_Lvalue}
Suppose $q$ is an odd prime such that $q^2 \ndiv N$,
and $f$ is congruent modulo a prime ideal over~$q$ to 
a normalized eigenform~$g \in S_2(\Gamma_0(N), \C)$ such that
$L(A_g,1) = 0$. Then $q$ divides ${\Mid \frac{H^+}{H[I_e]^+ + K^+} \miD}$.
If moreover $q$ does not divide $\Mid A_f(\Q)_{\rm tor} \miD$, then 
$q$ divides $\LAf(1) / \OAf$.
\end{prop}

\begin{proof}
It is proved in~\cite{ars:moddeg}) that if a prime~$\ell$
divides the congruence number, but $\ell^2 \nmid N$, 
then $\ell$ divides the
modular number.
%degree, then $\ell^2$ divides the conductor of the elliptic curve. 
It suffices to show $R_C = S_C$, i.e., $\T_\C = \T'_C$ locally
at max ideal of $\T_C / \im(I_A)$.
Now $A_f \ra J_A/ J_C$ is an isogeny, so $\Ann_{\T_C} J_A/ J_C
= \im(I_A) = (I_A + I_C)/I_C$
is the image of $\Ann_\T A_f = I_f$. So suffices to check
if $I_f \subseteq \m$, etc.

By a simple generalization of the proof,
it follows that if $q$ is as in the first statement of the
Proposition, then $q$ divides
$\mid (A_1^{\vee}/A_0^{\vee}) \cap (A_2^{\vee}/A_0^{\vee}) \mid$
(notation as in Lemma~\ref{prop:intersect}). The first part
of the proposition
now follows from Lemma~\ref{prop:intersect} and the fact
that $q$ is odd. The second part
follows from 
Prop.~\ref{thm:mainform}, in view of the following facts:  
${\Mid \pi_*(\T  e) / \pi_*(\Im  e) \miD}$ divides 
$\Mid A_f(\Q)_{\rm tor} \miD$, $q \ndiv c_\infty(A_f)$
(since $q$ is odd), and $q \ndiv \caf$ 
(since $q^2 \ndiv N$, by~\cite{agst:manin}). 
\end{proof}
}

\begin{cor} \label{cor:mwrk}
Let $q$ be an odd prime such that $q^2 \ndiv N$.
Suppose that there is a 
%$f$ is congruent to
%modulo a prime ideal over~$q$ to 
normalized eigenform~$g \in S_2(\Gamma_0(N), \C)$ such that 
$A_g$ has positive Mordell-Weil rank and $g$ 
is congruent to~$f$
modulo a prime ideal over~$q$
in the ring of integers
of the number field generated by the Fourier coefficients
of~$f$ and~$g$. 
%Suppose that there is a 
%normalized eigenform~$g \in S_2(\Gamma_0(N), \C)$ 
%that is congruent to~$f$
%modulo a prime ideal over~$q$
%in the ring of integers
%of the number field generated by the Fourier coefficients
%of~$f$ and~$g$, with the property that $L(A_g,1) = 0$.
Then the second part of the Birch and Swinnerton-Dyer conjecture
implies that $q$ divides 
$\Mid \Sha(A_f) \miD \cdot \prod_{p\mid N} c_p(A_f)$.
If moreover, $N$ is prime,  then the conjecture implies that
$q$ divides $\Mid \Sha(A_f) \miD$. 
\end{cor}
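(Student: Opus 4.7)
The plan is to chain together the congruence-to-factor input of Proposition~\ref{prop:conr_divs_Lvalue} with the Birch and Swinnerton-Dyer-conjectural divisibilities already extracted after Theorem~\ref{thm:mainform}. The corollary should fall out almost immediately once the congruence hypothesis on~$g$ is translated into a divisibility of the factor $\Mid \frac{H^+}{H[I_e]^+ + K^+} \miD$.

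First, I would convert the Mordell-Weil hypothesis on~$A_g$ into an analytic statement. Since $A_g$ has positive Mordell-Weil rank, the finiteness theorem of Kolyvagin--Logachev~\cite{kollog:finiteness} (applied in its contrapositive form: $L(g,1)\neq 0$ would force $A_g(\Q)$ to be finite) yields $L(g,1)=0$. Combined with the hypothesis that $f$ is congruent to~$g$ modulo a prime ideal over~$q$, we are precisely in the setting of the first statement of Proposition~\ref{prop:conr_divs_Lvalue}; that proposition then asserts that $q \divs \Mid \frac{H^+}{H[I_e]^+ + K^+} \miD$.

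For the first assertion of the corollary, I would invoke the BSD-based divisibility recorded just after Theorem~\ref{thm:mainform} (via equation~(\ref{sqfreelevelShah}) together with Lemma~\ref{lem:divs}): the second part of the Birch and Swinnerton-Dyer conjecture implies that, up to powers of~$2$ and of primes whose squares divide~$N$, the factor $\Mid \frac{H^+}{H[I_e]^+ + K^+} \miD$ divides $\Mid \Sha(A_f) \miD \cdot \prod_{p\mid N} c_p(A_f)$. Since $q$ is odd and $q^2 \ndiv N$ by hypothesis, neither of the ``bad'' prime-power ambiguities can absorb the prime~$q$, and therefore $q \divs \Mid \Sha(A_f) \miD \cdot \prod_{p\mid N} c_p(A_f)$, as desired.

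For the second assertion, when $N$ is prime I would simply quote Proposition~\ref{prop:divs}, which under the second part of the Birch and Swinnerton-Dyer conjecture gives the stronger statement that the odd part of $\Mid \frac{H^+}{H[I_e]^+ + K^+} \miD$ divides $\Mid \Sha(A_f) \miD$; since $q$ is odd and divides this factor, $q \divs \Mid \Sha(A_f) \miD$ follows at once. There is essentially no obstacle here beyond bookkeeping: the nontrivial ingredients (the congruence-to-factor step in Proposition~\ref{prop:conr_divs_Lvalue} and the extraction of the factor from the BSD formula in Theorem~\ref{thm:mainform}) have already been developed, and this corollary is simply their combination under the mild numerical hypotheses that $q$ is odd and $q^2\ndiv N$.
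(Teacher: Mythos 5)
Your proposal matches the paper's own proof exactly: Kolyvagin--Logachev to pass from positive Mordell--Weil rank of $A_g$ to $L(g,1)=0$, Proposition~\ref{prop:conr_divs_Lvalue} to get $q \divs \Mid \frac{H^+}{H[I_e]^+ + K^+} \miD$, then formula~(\ref{sqfreelevelShah}) with Lemma~\ref{lem:divs} for the general case and Proposition~\ref{prop:divs} for $N$ prime. No discrepancies.
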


\begin{proof}
By~\cite{kollog:finiteness}, since $A_g$ has positive Mordell-Weil rank,
$L(g,1) = 0$. Hence by Proposition~\ref{prop:conr_divs_Lvalue},
$q$ divides ${\Mid \frac{H^+}{H[I_e]^+ + K^+} \miD}$.
The first conclusion of the Corollary now follows from
formula~(\ref{sqfreelevelShah}) and Lemma~\ref{lem:divs},
and the second conclusion (for $N$ prime) follows from
Proposition~\ref{prop:divs}.
\end{proof}

We also have the following characterization of
certain primes that divide the factor
${\Mid \frac{H^+}{H[I_e]^+ + K^+} \miD}$.

\begin{cor} \label{cor:divs}
Suppose $q$ is an odd prime such that $q^2 \ndiv N$.
Then $q$ divides ${\Mid \frac{H^+}{H[I_e]^+ + K^+} \miD}$
if and only if 
there is a normalized eigenform~$g \in S_2(\Gamma_0(N), \C)$ with
$L(g,1) = 0$ such that  $g$ is congruent to~$f$ modulo 
a prime ideal over~$q$
in the ring of integers
of the number field generated by the Fourier coefficients
of~$f$ and~$g$.
\end{cor}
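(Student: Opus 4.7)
The plan is straightforward: the corollary is just the combination of Proposition~\ref{prop:ord_to_congr} (the ``only if'' direction) and the first assertion of Proposition~\ref{prop:conr_divs_Lvalue} (the ``if'' direction), so essentially no new work is needed beyond citing these. I would simply assemble them.

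For the ``only if'' direction, assume $q$ is an odd prime with $q^2 \nmid N$ and that $q$ divides $\Mid \frac{H^+}{H[I_e]^+ + K^+} \miD$. By Proposition~\ref{prop:ord_to_congr}, which takes as input only that $q$ is odd and divides this quantity, there is a normalized eigenform $g \in S_2(\Gamma_0(N), \C)$ with $L(g,1) = 0$ congruent to $f$ modulo a prime ideal over $q$ in the appropriate ring of integers. (The hypothesis $q^2 \nmid N$ is not needed in this direction; it is a condition we are entitled to assume by hypothesis.)

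For the ``if'' direction, assume the existence of $g$ as in the statement. Since $q^2 \nmid N$ and $q$ is odd, the hypotheses of Proposition~\ref{prop:conr_divs_Lvalue} are satisfied. Its first conclusion then yields that $q$ divides $\Mid \frac{H^+}{H[I_e]^+ + K^+} \miD$, which is what we want. (We do not need the additional hypothesis $q \nmid |A_f(\Q)_{\rm tor}|$ nor the second conclusion about $L$-values, both of which concern only the passage from the factor to $\LAf(1)/\OAf$.)

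There is no real obstacle, as both directions are already done in the preceding two propositions; the only thing to check is that the hypotheses match, which they do. The substantive work that was hidden in those propositions involved (i) Lemma~\ref{prop:intersect} identifying the factor with the order of an intersection $(A/C)\cap(B/C)$, (ii) Lemma~\ref{prop:inter_to_cong} comparing this intersection (up to primes dividing $N$ to at least the second power) with the congruence module $S[\IC]/(S[\IB]+S[\IA])$, and (iii) Lemma~\ref{lem:congr} translating primes dividing that congruence module into the existence of an eigenform $g$ of analytic rank $\geq 1$ congruent to $f$ modulo a prime over $q$. The hypothesis $q^2 \nmid N$ in our corollary is exactly what is needed to invoke the second (finer) assertion of Lemma~\ref{prop:inter_to_cong}, which is what upgrades the one-way divisibility to an ``if and only if.''
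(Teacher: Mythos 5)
Your proof is correct and matches the paper's own proof, which also simply cites Propositions~\ref{prop:ord_to_congr} and~\ref{prop:conr_divs_Lvalue} for the two directions. The observations about which hypotheses are used in which direction are accurate.
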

\begin{proof}
This follows from Propositions~\ref{prop:ord_to_congr}
and~\ref{prop:conr_divs_Lvalue}.
\end{proof}

\section{Visibility: relating the congruence to 
the Shafarevich-Tate group and the component group}
\label{section:visibility}

%Recall that $f$ is a newform of weight~$2$ on~$\Gamma_0(N)$ such that
%$L(A_f,1) \neq 0$.
In this section, we finally relate the factor 
$\Mid \frac{H^+}{H[I_e]^+ + K^+} \miD$ in the numerator
of our formula~(\ref{formula:lovero}) for~$\LAf(1)/\OAf$
to the numerator
${\Mid \Sha(A_f) \miD \cdot \prod_{\scriptscriptstyle{p |N}}  c_p(A_f)}$
of the Birch and Swinnerton-Dyer
conjectural formula~(\ref{bsdform}) for~$\LAf(1)/\OAf$.
%If $p$ is a prime that divides~$N$, then
%let $w_p$ denote the eigenvalue of the Atkin-Lehner
%involution~$W_p$ acting on~$f$.
%We will be using the techniques of~\cite{dsw} and will 
%use the notation of Theorem~6.1.
Recall that since $L(A_f,1) \neq 0$, 
by~\cite{kollog:finiteness}, $\Sha(A_f)$ is finite and
$A_f(\Q)$ has rank zero.
At several instances below, we will consider the torsion 
group $A_f[\qq]$ where $\qq$ is a prime ideal in a number
field~$L$
%\edit{bad notation}
containing the Fourier coefficients of~$f$ and another
eigenform -- this makes sense if $A_f$ has the full action
of the ring of integers of~$L$. If not, then 
we merely replace $\qq$ by a prime of fusion,
which is a maximal ideal of~$\T$ (see, e.g., \cite[p.196]{ribet:modp}
and~\cite[\S2]{eki:congr})
that is contained in~$\qq$, and all the arguments go through.
We have preferred to consider prime ideals of~$L$
instead of maximal ideals of~$\T$ simply to keep the notation 
similar to the one
in the article~\cite{dsw}, which we will often refer to in this section.
If $p$ is a prime that divides~$N$, then we denote by~$w_p$
the eigenvalue of the Atkin-Lehner involution~$W_p$ acting on~$f$.~
Similarly $w_N$ denotes 
the eigenvalue of the Atkin-Lehner involution~$W_N$ acting on~$f$
(so that $w_N$ is the product over all primes that divide~$N$ 
of the~$w_p$'s).
\begin{thm} \label{thm:mainprime}
Suppose $N$ is prime, and 
$q$ is a prime such that 
$q$ divides $\Mid \frac{H^+}{H[I_e]^+ + K^+} \miD$, 
and $q \nmid N(N-1)$.
Assume that for all newforms~$g$ on~$\Gamma_0(N)$,
if $L(g,1) = 0$, then
the rank of~$A_g(\Q)$ is positive.
Then $q$ divides~$\Mid \Sha(A_f) \miD$. If moreover we assume
the parity conjecture, then $q^2$ divides~$\Mid \Sha(A_f) \miD$
(both the assumption on the rank and the parity
conjecture hold if the first part of the Birch and
Swinnerton-Dyer conjecture (Conjecture~\ref{bsd1}) is true).
\end{thm}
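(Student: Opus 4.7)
My plan is to deduce the required congruence from Corollary~\ref{cor:divs}, then transfer a rational point of positive rank through visibility, and finally use a parity count to upgrade the divisibility from $q$ to $q^2$.

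First, observe that $q \ndiv N(N-1)$ forces $q$ to be odd (since $N$ and $N-1$ cannot both be odd), and $q \ndiv N$ in particular implies $q^2 \ndiv N$. Hence Corollary~\ref{cor:divs} applies: the hypothesis $q \divs \Mid \frac{H^+}{H[I_e]^+ + K^+} \miD$ yields a normalized eigenform $g \in S_2(\Gamma_0(N), \C)$ with $L(g,1) = 0$ and a prime ideal $\qq$ above $q$ in the ring of integers of the number field generated by the Fourier coefficients of $f$ and $g$, such that $f \equiv g \pmod{\qq}$. Because $N$ is prime, $g$ is automatically a newform of level~$N$. By the hypothesized first-part BSD statement for $A_g$, the Mordell-Weil rank of $A_g(\Q)$ is at least one, so a point $P \in A_g(\Q)$ of infinite order exists.

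Second, I would invoke the visibility theorem of~\cite{dsw}, which is the same tool cited throughout this section. The congruence supplies a $G_\Q$-isomorphism $A_g[\qq] \isom A_f[\qq]$; the Kummer image of $P$ in $H^1(\Q, A_g[\qq])$ transports to a class in $H^1(\Q, A_f[\qq])$ whose image in $H^1(\Q, A_f)$ is the candidate element of $\Sha(A_f)[\qq]$. The two hypotheses of the visibility theorem that one must verify, irreducibility of $A_f[\qq]$ as a $G_\Q$-module and triviality of the local contribution at the only place of bad reduction, are exactly what $q \ndiv N(N-1)$ is designed to supply: for prime level~$N$, Mazur's theorem on the Eisenstein ideal in~\cite{mazur:eisenstein} guarantees irreducibility whenever $q \ndiv N-1$, while $q \ndiv N$ handles the local condition at~$N$. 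This produces a nonzero element of $\Sha(A_f)[\qq]$ and proves that $q \divs \Mid \Sha(A_f) \miD$.

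Third, for the parity upgrade: since $\LAf(1) \neq 0$, the sign of the functional equation of $L(f,s)$ is $+1$, equivalently $w_N(f) = -1$. Comparing $N$-th Fourier coefficients modulo~$\qq$ gives $w_N(g) \equiv w_N(f) \pmod{\qq}$; both signs lie in $\{\pm 1\}$ and $q$ is odd, so $w_N(g) = -1$. Consequently $L(g,s)$ has even order of vanishing at $s=1$, and since $L(g,1)=0$ this order is at least~$2$. Assuming the parity conjecture for $g$ forces the Mordell-Weil rank of $A_g(\Q)$ to be even and hence at least~$2$. Feeding two independent rational points through the visibility construction of the previous paragraph produces a subgroup of $\Sha(A_f)$ of order at least~$q^2$. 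The main obstacle is the second step: one must confirm that the class constructed by visibility is genuinely nonzero and locally trivial everywhere, and this is precisely where irreducibility of $A_f[\qq]$ and the absence of local obstructions at~$N$ become indispensable, making the restriction $q \ndiv N(N-1)$ tight.
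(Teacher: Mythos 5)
Your proof takes essentially the same route as the paper: use Corollary~\ref{cor:divs} (equivalently Proposition~\ref{prop:ord_to_congr}) to produce a congruent eigenform $g$ with $L(g,1)=0$, note that $A_g$ then has positive Mordell-Weil rank by hypothesis, invoke Theorem~6.1 of~\cite{dsw} (verifying irreducibility via Mazur's Eisenstein ideal bound using $q\nmid N-1$, and the local condition at~$N$ using $w_N=-1$ and $q\nmid N-1$), and for the square use the matching Atkin--Lehner eigenvalue plus parity to force even rank at least~$2$. The one cosmetic point worth noting is that the paper works with $\Sha(A_f^\vee)$ as in~\cite{dsw} and then passes to $\Sha(A_f)$ via the nondegeneracy of the Cassels--Tate pairing; your sketch elides this step, and you also label the rank hypothesis ``first-part BSD'' when the theorem assumes only the weaker one-directional statement, but neither affects the substance of the argument.
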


\begin{proof}
By Prop.~\ref{prop:ord_to_congr}, 
there is a normalized eigenform~$g \in S_2(\Gamma_0(N), \C)$ such that
$L(A_g,1) = 0$, and $f$ is congruent to~$g$ modulo 
a prime ideal~$\qq$ over~$q$
in the ring of integers
of the number field generated by the Fourier coefficients
of~$f$ and~$g$.
%a prime ideal~$\qq$ lying over~$q$.
By the hypothesis, the rank of~$A_g(\Q)$ is positive.
%Since $q \nmid {\rm numr}(\frac{N-1}{12})$, it follows
By~\cite[Prop.~14.2]{mazur:eisenstein}, 
since $q \ndiv 2 \cdot {\rm numr}(\frac{N-1}{12})$, 
it follows that $A_f^\vee[\qq]$ and $A_g^\vee[\qq]$ are  irreducible.
%for any newform~$h$ in~$S_2(\Gamma_0(N), \C)$ (in particular, if $h=f$ or $h=g$). 
Also, since $\LAf(1) \neq 0$, we have $w_N = -1$. Hence
by hypothesis, $N \not\equiv - w_N \pmod q$. Thus the hypotheses
of Theorem~6.1 of~\cite{dsw} are satisfied
%\edit{Check if max ideal OK. If yes, use this throughout.} 
and the conclusion of this theorem tells us that
$q$ divides~$\Mid \Sha(A_f^\vee) \miD$ 
(in the notation of~\cite{dsw}, $r \geq 1$ since
the dimension of $H^1_f(\Q,V'_\qq(1))$ is an upper bound for the
rank of~$A_g(\Q)$; also
%and we conclude from this theorem 
%that $q$ divides the order of~$H^1_f(\Q, A_\qq(1))$.
%Regarding the statement of Theorem~6.1 of~\cite{dsw},
given that  
$\Sha(A_f^\vee)$ is finite,
%$A_f(\Q)$ has rank zero, and $A_f(\Q)[q] = 0$, 
the $q$-primary part of~$\Sha(A_f^\vee)$ is the same as $H^1_f(\Q, A_\qq(1))$).
By the perfectness of the Cassels-Tate pairing,
$q$ divides~$\Mid \Sha(A_f) \miD$ as well.
This proves the first assertion of the theorem.

Since $q$ is odd, $f$ and~$g$ have the same eigenvalue
under the Atkin-Lehner involution, and hence the same sign
in their functional equations (see, e.g., Remark~\ref{rmk:cp}). 
Thus if we assume the parity
conjecture, then $A_g^\vee$ has even Mordell-Weil rank, and so
$r \geq 2$ (where $r$ is as in the statement of Theorem~6.1 of~\cite{dsw}).
Theorem~6.1 of~\cite{dsw} in fact tells us that
$q^r$ divides~$\Mid \Sha(A_f^\vee) \miD$, which gives the 
second assertion of our theorem.
%It follows from Theorem~6.1 of~\cite{dsw} that $q$
%divides $\Mid \Sha(A_f) \miD$.
\end{proof}

%By formula~(\ref{primelevelShah}) at the end of Section~\ref{section:extract},
As mentioned towards the end of Section~\ref{section:extract},
the second part of the Birch and Swinnerton-Dyer
conjecture implies that when $N$ is prime, if an odd prime~$q$ divides
${\Mid \frac{H^+}{H[I_e]^+ + K^+} \miD}$, then $q$ 
divides~$\Mid \Sha(A_f) \miD$.
Thus Theorem~\ref{thm:mainprime} may be viewed as a partial
result towards the second part of the Birch and Swinnerton-Dyer
conjecture.

\begin{eg} \label{eg:389} 
Consider the prime level~$389$.
From~\cite{stein:modformsdata}, one sees
that the quotient associated the newform denoted~{\bf 389E1}
is the winding
quotient~$J_0(N)/I_e J_0(N)$ for~$N=389$ (the other newforms
of level~$389$ have positive analytic rank).
The factor~$\Mid \frac{H^+}{H[I_e]^+ + K^+} \miD$
was computed for this winding quotient in~\cite[\S3.3]{agashe:phd}
and its odd part was found to be~$5$. Moreover, $5 \ndiv 389(389-1)$, and 
hence if one assumes the first part of the Birch and Swinnerton-Dyer
conjecture, Theorem~\ref{thm:mainprime} implies that 
$5$ divides $\Sha({\bf 389E})$. 
In fact, it turns out that one can show $5$ divides
$\Sha({\bf 389E})$
without assuming the conjecture -- see~\cite[\S4.1]{agst:vis}.
%See Remark~\ref{rmk:egs}(2) for other examples.
\end{eg}

The following proposition is easily extracted 
from~\S7.4 of~\cite{dsw}; see also~\cite[\S7]{dummigan:level}.

\begin{prop}[Dummigan-Stein-Watkins] \label{prop:cp}
%$A[\qq]$ is unramified at~$p$.
%$p||N$. 
Let $p$ be a prime such that $p || N$. Suppose that there is a newform
$h$ of level dividing~$N/p$, 
and an odd prime~$q \neq p$ such that 
modulo a prime ideal~$\qq$ over~$q$
in some number field containing the Fourier coefficients
of~$f$ and~$h$, we have $f \equiv h$ 
(for Fourier coefficients of index coprime to~$Nq$).
Suppose $A_f[\qq]$ and~$A_h[\qq]$ are irreducible and that
$p \not\equiv - w_p \pmod q$. If $w_p = -1$, then
$\ord_{\qq}(c_p(A_f))>0$.
%and if $w_p=1$ then $\ord_{\qq}(c_p(A_f))=0$.
\end{prop}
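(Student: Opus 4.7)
The plan is to follow the Grothendieck-Mazur-Ribet analysis of the component group at a prime of multiplicative reduction, along the lines sketched in \S7.4 of~\cite{dsw} and~\S7 of~\cite{dummigan:level}. Since $p || N$, the N\'eron model of $J_0(N)$ at~$p$ is semistable, and its component group at~$p$ is computed from the character group $X = X_{J_0(N),p}$ of the toric part of the special fiber together with the monodromy pairing $X \to X^\vee$. By Ribet's theorem, $X$ is identified Hecke-equivariantly with the degree-zero divisors on the supersingular points of $X_0(N/p)_{\overline{\F_p}}$, and this identification also respects the Atkin-Lehner involution~$W_p$.

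First I would invoke Ribet's level-raising criterion: the congruence $f \equiv h \pmod \qq$ (for Fourier coefficients of index coprime to~$Nq$), combined with the irreducibility of the residual representations $A_f[\qq]$ and $A_h[\qq]$ and the fact that $f$ is $p$-new at level~$N$, forces $a_p(h) \equiv \pm(p+1) \pmod \qq$. The identity $a_p(f) = -w_p$ for a $p$-new eigenform at $p || N$, combined with $w_p = -1$ and the hypothesis $p \not\equiv -w_p \pmod q$, pins down the sign so that the oldform cycle in~$X$ attached to~$h$ at level~$N$ maps, modulo~$\qq$, onto the $f$-isotypic line of~$X$. Next, I would pass to the quotient $A_f$: through the surjection $J_0(N) \to A_f$ and the induced maps of character groups, the $\qq$-primary part of $X_{A_f}^\vee / \im(X_{A_f})$ inherits a nontrivial class from the pair of congruent eigenforms. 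The essential point is that although $A_h$ has \emph{good} reduction at~$p$ (so $h$ contributes nothing to the character group of~$A_h$), the $h$-old subspace at level~$N$ still sits inside~$X$ and is congruent modulo~$\qq$ to the $f$-piece.

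The technical heart is the monodromy computation. On the $f$-isotypic piece of~$X$, the monodromy pairing evaluates on the distinguished element as a scalar of the form $p + w_p$, which under $w_p = -1$ equals $p - 1$, and which is a unit modulo~$\qq$ precisely by the hypothesis $p \not\equiv -w_p = 1 \pmod q$. The class measuring $\ord_\qq(c_p(A_f))$ is then constructed as the image in the cokernel of $X_{A_f} \to X_{A_f}^\vee$ of the difference between the lifts of the $f$- and $h$-classes; the invertibility of the monodromy scalar ensures this image is nonzero modulo~$\qq$, yielding $\ord_\qq(c_p(A_f)) > 0$. The main obstacle is the careful bookkeeping required when passing from the character group of~$J_0(N)$ to that of~$A_f$, and verifying that the congruence-induced class genuinely survives this quotient rather than being absorbed into contributions from other Hecke-isotypic components; the irreducibility hypotheses on $A_f[\qq]$ and $A_h[\qq]$ are exactly what rule out the degenerate scenarios in which the relevant $\qq$-isotypic pieces could collapse or become confused with contributions from other eigenforms.
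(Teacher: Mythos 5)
Your proposal takes a genuinely different route from the paper, and it contains substantive errors. The paper works purely in the $\ell$-adic Galois-cohomological framework of Bloch--Kato / Dummigan--Stein--Watkins: it uses Chebotarev to identify $A_f[\qq]\cong A_h[\qq]$, notes this module is unramified at~$p$ because $V''_\qq$ (attached to~$h$) is, deduces that $H^0(I_p,A[\qq])$ is two-dimensional while $H^0(I_p,V_\qq)$ is one-dimensional by Carayol, reads off the $\Frob_p^{-1}$-eigenvalues $-w_p$ and $-w_p/p$ after twisting, uses $p\not\equiv -w_p\pmod q$ to separate them, and concludes $\Frob_p^{-1}$ acts trivially on the one-dimensional quotient when $w_p=-1$, giving $\ord_\qq(c_p(k/2))>0$; the appendix then identifies the prime-to-$p$ part of $c_p(k/2)$ with $c_p(A_f)$. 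You instead propose to argue geometrically via the character group of the toric part of the semistable fiber and the monodromy pairing.

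There are two concrete problems with the geometric argument as you sketch it. First, the claim that ``the $h$-old subspace at level $N$ still sits inside~$X$'' is false. In the Deligne--Rapoport semistable model for $p\|N$, the filtration of $J_0(N)_{/\F_p}$ places the $p$-old part (which is precisely what $h$ contributes, since $p\nmid N/p$) in the abelian quotient $J_0(N/p)^2$, not in the torus. The character group~$X\cong\Z[SS]^0$ is, after tensoring with~$\Q$, the \emph{$p$-new} part of the Hecke module; the $h$-isotypic component of~$X$ is zero, so there is no ``oldform cycle in~$X$ attached to~$h$'' to map onto anything. Second, the assertion that the monodromy pairing on the $f$-isotypic piece is ``a scalar of the form $p+w_p$'' is not justified and is not accurate as stated: the monodromy pairing on $\Z[SS]^0$ is the mass/weight pairing at the supersingular points, and its behavior after passing to the quotient $X\twoheadrightarrow X_{A_f}$ does not collapse to a single scalar. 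The actual bridge between $f$ and $h$ is that $\overline\rho_{f,\qq}\cong\overline\rho_{h,\qq}$ is unramified at~$p$ (this is what your first step correctly uses, via the level-lowering direction of Ribet/Mazur), and it is precisely this unramifiedness, combined with the Steinberg shape of $V_\qq$ at~$p$ and the hypothesis $p\not\equiv -w_p\pmod q$, that the paper's cohomological computation exploits directly. A corrected monodromy-pairing argument could in principle be made, in the spirit of Ribet's level-raising work relating $\qq\mid c_p(A_f)$ to the statement ``$\overline\rho_{f,\qq}$ is modular of level $N/p$,'' but it would have to avoid the spurious geometric assertions above and would end up rederiving the same unramifiedness and eigenvalue analysis in different language.
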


\begin{proof}
The proof is essentially given in the 
first example in~\S7.4 of~\cite{dsw}; 
we repeat the argument here for clarity. 
We use the notation as in~\cite{dsw}. 
%Note that as before, if $A_f$ does not have the full action of $\qq$, then
%we replace $\qq$ by a prime of fusion.
%Suppose $h$ is the 
%newform of level dividing $N/p$ that is 
%congruent $\bmod \ \qq$ to~$f$ (for Fourier coefficients of index coprime to~$Nq$).

Let $T''_\qq$, $V''_\qq$, and $A''_\qq$ be the objects attached to~$h$,
just as $T_\qq$, $V_\qq$, and $A_\qq$ have been attached to~$f$.
Since $p^2 \ndiv N$, $p$ does not divide the level of~$h$, 
and so the representation $V''_{\qq}$ is unramified at $p$. 
Hence $H^0(I_p,A''[\qq])$ is
two dimensional. Since $A_f[\qq]$ and~$A_h[\qq]$ are irreducible,
by the Chebotarev density theorem, they are isomorphic as Galois
representations and thus $H^0(I_p,A[\qq])$ is also two dimensional. 
Since $p || N$, by~\cite{carayol:hilbert},
$H^0(I_p,V_\qq)$ is one dimensional, and  
following the argument in Case~(2) of the proof
of Theorem~6.1 of~\cite{dsw}, 
we see that the eigenvalue of $\Frob_p^{-1}$ acting on
$H^0(I_p,V_\qq)$ is $\alpha=-w_p p^{(k/2)-1}$ (for us, $k=2$).
Hence $\Frob_p^{-1}$  acts as $\alpha=-w_p p^{(k/2)-1}$
on the subspace of $H^0(I_p,A[\qq])$ formed by 
the image of $H^0(I_p,V_\qq)$.
Since $\alpha\beta=p^{k-1}$, 
the other eigenvalue must be $\beta=-w_pp^{k/2}$. 
Twisting by $k/2$, we see that the corresponding eigenvalues of 
$\Frob_p^{-1}$ on~$H^0(I_p,A[\qq](k/2))$ are $-w_p$ and $-w_p/p$. Since
$p \not\equiv - w_p \pmod q$ and $w_p=-1$,
the two eigenvalues are different. Hence 
the quotient of $H^0(I_p,A[\qq](k/2))$ by the image of $H^0(I_p,V_{\qq}(k/2))$
may be viewed as a subspace, and 
$\Frob_p^{-1}$ acts as $-w_p$ on the quotient.
As $w_p = -1$, we have $\ord_{\qq}(c_p(k/2))>0$.
%If $w_p = 1$, then since $q$ is odd, $\ord_{\qq}(c_p(k/2))=0$.
Since $k=2$, the prime-to-$p$ parts of $c_p(k/2)$ (as 
defined in~\cite[\S4]{dsw}) and $c_p(A_f)$ are the same (see the Appendix).
\end{proof}

\begin{rmk} \label{rmk:cp}
In Proposition~\ref{prop:cp}, $f$ need not have analytic rank~$0$.
Note that if $w_p = -1$, then the analytic rank of~$h$ has a different
parity than that of~$f$. This is because the sign 
in the functional equation of a newform (of weight~$2$) is the negative 
of the product of the eigenvalues of~$W_p$ over all primes $p$ dividing 
the level of the form in question.
If this sign is positive, then the analytic rank of the newform
is even, and 
it is odd otherwise (see, e.g., \cite[Ex.~4.3.3]{silverberg:openque}).
\end{rmk}

\begin{prop}  \label{thm:vissqfree}
%Suppose $N$ is squarefree.
Let $q$ be an odd prime such that \mbox{$q \ndiv N$}.
Suppose $g \in S_2(\Gamma_0(N), \C)$ is an eigenform (for
all the Hecke operators) such that
$A_g(\Q)$ has positive rank 
and $f$ is congruent to~$g$ modulo a prime ideal~$\qq$ over~$q$
in the ring of integers
of the number field generated by the Fourier coefficients
of~$f$ and~$g$. %, and $A_g[\qq]$ is irreducible.
%in some number field containing the Fourier coefficients of~$f$ and~$g$.
Suppose that $A_f[\qq]$ is an irreducible representation
of the absolute Galois group of~$\Q$. 
Assume that for all primes $p \divs N$, 
$p \not\equiv - w_p \pmod q$.
We have two possibilities:
%Consider the following statement:\\
%Suppose~(*) is false. 

\noindent
Case (i) For all primes $p \divs N$,
$f$ is not congruent modulo~$\qq$ to a newform
of level dividing $N/p$ (for Fourier coefficients of index coprime to~$Nq$):\\
In this case, suppose that for all primes~$p$ such that $p^2 \divs N$, 
we have $p \not\equiv -1  \pmod q$. 
Then $q$ divides $\Mid \Sha(A_f) \miD$. 
If moreover we assume
the parity conjecture, then $q^2$ divides~$\Mid \Sha(A_f) \miD$.
%If for some prime $p \divs N$,
%$f$ is congruent $\bmod \ \qq$ to a newform
%of level dividing $N/p$ (for Fourier coefficients of index coprime to~$Nq$),
%then $\qq$ divides $c_p(A_f)$ provided the following statement is false: \\

\noindent Case (ii) $f$ is congruent modulo~$\qq$ to a newform of lower level
(for Fourier coefficients of index coprime to~$Nq$):\\
In this case, suppose that \\
(*) there is a prime $p$ dividing~$N$ 
such that 
$f$ is congruent modulo~$\qq$ 
to a newform~$h$ of level dividing $N/p$ 
(for Fourier coefficients of index coprime to~$Nq$), with
$p^2 \ndiv N$, $w_p=-1$, and
$A_h[\qq]$ irreducible.\\
%Consider the following statement:\\
%\noindent (*) for all primes $p \divs N$, 
%%$A[\qq]$ is unramified, then $w_p=1$.\\
%if $f$ is congruent $\bmod \ \qq$ 
%to a newform~$h$ of level dividing $N/p$ 
%(for Fourier coefficients of index coprime to~$Nq$), then either
%$p^2 \divs N$, or $w_p=1$, or
%$A_h[\qq]$ is reducible.\\
%If (*) is false, 
Then $q$ divides $\prod_{\scriptscriptstyle{{p\mid N}}} c_p(A_f)$.
%$\Mid \Sha(A_f) \miD \cdot 
%\prod_{\scriptscriptstyle{{p\mid N}}} c_p(A_f)$.
\end{prop}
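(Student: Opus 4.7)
The plan is to handle the two cases separately, combining the visibility theorem of Dummigan-Stein-Watkins (Theorem~6.1 of~\cite{dsw}) with the component-group analysis of Proposition~\ref{prop:cp}, in a manner that parallels the proof of Theorem~\ref{thm:mainprime} but now accounts for the fact that $N$ need not be prime and may be divisible by several bad primes, possibly to the second or higher power.

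For Case~(i), I would apply Theorem~6.1 of~\cite{dsw} to the congruence $f \equiv g \pmod{\qq}$. By~\cite{kollog:finiteness}, positivity of $\rank A_g(\Q)$ forces the Bloch-Kato Selmer group attached to~$g$ to have dimension $r \geq 1$, which is the input the visibility theorem needs. To apply that theorem one must verify a local hypothesis at each prime $p \mid N$: when $p || N$, the condition $p \not\equiv -w_p \pmod{q}$ supplies what is needed (exactly as in Case~(2) of the proof of Theorem~6.1 of~\cite{dsw}); when $p^2 \mid N$, the supplementary condition $p \not\equiv -1 \pmod{q}$ handles the potentially wildly ramified case. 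Crucially, the hypothesis that $f$ is \emph{not} congruent modulo~$\qq$ to any newform of level dividing $N/p$ (for coefficients coprime to~$Nq$) is precisely the non-level-lowering condition that rules out the exceptional behavior in which the visibility construction yields only classes unramified at~$p$ (which would contribute to $c_p(A_f)$ rather than to $\Sha$, as happens in Case~(ii)). Granted these inputs, Theorem~6.1 of~\cite{dsw} gives $q \mid |\Sha(A_f^{\vee})|$, and the perfectness of the Cassels-Tate pairing yields $q \mid |\Sha(A_f)|$. For the sharper statement under the parity conjecture, since $q$ is odd the Atkin-Lehner eigenvalues (and hence the signs in the functional equations) of $f$ and $g$ agree; since $L(f,1) \neq 0$ the sign for $f$ is $+1$, so also for $g$, and the parity conjecture then forces $\rank A_g(\Q)$ to be even, giving $r \geq 2$. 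The conclusion $q^r \mid |\Sha(A_f^{\vee})|$ of~\cite{dsw} then upgrades to $q^2 \mid |\Sha(A_f)|$.

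For Case~(ii), the hypothesis~(*) places us directly in the setting of Proposition~\ref{prop:cp} applied to the newform $h$ at the prime~$p$: we have $p || N$, $w_p = -1$, the irreducibility of both $A_f[\qq]$ and $A_h[\qq]$, the congruence $f \equiv h \pmod{\qq}$ (for coefficients coprime to $Nq$), and $p \not\equiv -w_p \pmod{q}$ from the blanket hypothesis of the proposition. Proposition~\ref{prop:cp} then yields $\ord_{\qq} c_p(A_f) > 0$, and in particular $q \mid c_p(A_f)$, hence $q \mid \prod_{p \mid N} c_p(A_f)$.

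The hard part will be the verification in Case~(i) that the non-level-lowering hypothesis, combined with the arithmetic conditions at each $p \mid N$, really suffices to meet the local hypotheses of Theorem~6.1 of~\cite{dsw} \emph{uniformly} over all bad primes---in particular, that the argument at a prime $p$ with $p^2 \mid N$ goes through under the supplementary condition $p \not\equiv -1 \pmod{q}$, where the local representation is not simply Steinberg. A secondary technical point is the prime-of-fusion substitution when the coefficient fields of $f$ and $g$ differ: one must confirm that the arguments in~\cite{dsw}, which are naturally phrased over a common ring of integers, survive after replacing $\qq$ by a maximal ideal of $\T$ contained in it, as indicated in the discussion preceding the statement.
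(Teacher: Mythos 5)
Your proposal is correct and follows the same route as the paper: Case~(i) is obtained by applying Theorem~6.1 of~\cite{dsw} to the congruence $f\equiv g\pmod{\qq}$ (with $r\ge 1$ from positivity of $\rank A_g(\Q)$, perfectness of the Cassels--Tate pairing, and the parity-conjecture upgrade to $r\ge 2$ as in the proof of Theorem~\ref{thm:mainprime}), and Case~(ii) is a direct invocation of Proposition~\ref{prop:cp}. The paper's own proof simply cites these two ingredients; your write-up fills in the same details the paper delegates to the proof of Theorem~\ref{thm:mainprime} and the discussion preceding it.
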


\begin{proof} 
%If, for all primes $p \divs N$,
%$f$ is not congruent $\bmod \ \qq$ to a newform
%of level dividing $N/p$ (for Fourier coefficients of index coprime to~$Nq$),
%then  $\qq$ divides $\Mid \Sha(A_f) \miD$ 
The first part follows from~\cite[Thm.~6.1]{dsw}
(cf. the proof of Theorem~\ref{thm:mainprime} for details),
and the second part follows from Proposition~\ref{prop:cp}.
%For the second part, if (*) is false, then 
%for some prime~$p || N$,  we have $w_p = -1$ and
%$f$ is congruent $\bmod \ \qq$ to a newform~$h$
%of level dividing $N/p$ (for Fourier coefficients of index coprime to~$Nq$),
%with $A_h[\qq]$ irreducible. 
%Then by Prop.~\ref{prop:cp}, $\qq$ divides $c_p(A_f)$ and hence
%the product $\Mid \Sha(A_f) \miD \cdot 
%\prod_{\scriptscriptstyle{{p\mid N}}} c_p(A_f)$.
\end{proof}

In view of Corollary~\ref{cor:mwrk}, the Proposition above provides
theoretical evidence towards the second part of the Birch
and Swinnerton-Dyer conjecture.

\begin{rmk} %\ \\
(1) In condition~(*),  the statement 
``$f$ is congruent $\bmod \ \qq$ 
to a newform of level dividing $N/p$ 
(for Fourier coefficients of index coprime to~$Nq$)''
may be replaced by the potentially weaker statement  
``$A[\qq]$ is unramified at~$p$'' (this can be seen
from the proof of Prop.~\ref{prop:cp}).
\\
(2) %Concerning the condition that $w_p = -1$ in hypothesis~(*), 
Suppose 
$h$ is a newform of level~$N/p$ for some prime~$p || N$ with $w_p=1$
such that $f \equiv h$ modulo a prime ideal with odd residue characteristic.
This is one situation where the condition~(*) does not hold.
Then, as N. Dummigan pointed out to us, by the reasoning in Remark~\ref{rmk:cp},
the sign in the functional equation of $L(A_h,s)$
is the same as that of $L(A_f,s)$, i.e., it is positive.
Thus $A_h$ has even analytic rank. If the analytic rank of~$h$
is in fact zero, then   
since $h \equiv g \pmod \qq$, one expects that $\Sha(A_h)$ is non-trivial,
and hence $h$ should show up in the tables in~\cite{agst:bsd}.
With this in mind, we searched the tables 
in~\cite{agst:bsd} %for examples where condition~(*) may hold  
and found only one potential example where (*) may fail because
the condition on~$w_p$ does not hold:
At level $N=1994$, whose prime factorization is $2 \cdot 997$, the
newform~$f = {\bf 1994D}$ is of rank~$0$ with $w_2=1$
($w_{997}=-1$, but there are no cusp forms over~$\Gamma_0(2)$).
The newform~$f$ is congruent
modulo a prime ideal over~$3$ to $g = {\bf 997B}$ of positive 
analytic rank (this can be deduced from the table in~\cite{agst:bsd}
and the result of~\cite{ars:moddeg}
that the modular exponent divides the congruence exponent).
Similarly, one finds that $g$ is congruent modulo a prime ideal over~$3$
to $h = {\bf 997H}$ of Mordell-Weil rank zero. However, we do not know if the 
congruences mentioned in the previous two sentences 
hold modulo the {\em same} ideal over~$3$.
\end{rmk}

\begin{thm}\label{thm:mainsqfree}
%Suppose $N$ is square-free, and 
Let $q$ be a prime such that 
$q$ divides $\Mid \frac{H^+}{H[I_e]^+ + K^+} \miD$.\\
Suppose that
$q \ndiv 2 N$ and that for all maximal ideals~$\qq$ of~$\T$
with residue characteristic~$q$, $A_f[\qq]$ is irreducible.
Assume that for all newforms~$g$ of level dividing~$N$,
if $L(g,1) = 0$, then the rank of~$A_g(\Q)$ is positive
(this would hold if the first part of the Birch and Swinnerton-Dyer conjecture
(Conjecture~\ref{bsd1}) is true).
%and $A_g[\qq]$ is irreducible.
Suppose that  
for all primes $p \divs N$, $p \not\equiv - w_p \pmod q$.
We have two possibilities:

%Consider the following statement:\\
%Suppose~(*) is false. 

\noindent
Case (i) For all primes $p \divs N$,
$f$ is not congruent modulo a maximal ideal of~$\T$ over~$q$ to a newform
of level dividing $N/p$ (for Fourier coefficients of index coprime to~$Nq$):\\
In this case, suppose that for all primes~$p$ such that $p^2 \divs N$, 
we have $p \not\equiv -1  \pmod q$. 
Then $q$ divides $\Mid \Sha(A_f) \miD$. If moreover we assume
the parity conjecture, then $q^2$ divides~$\Mid \Sha(A_f) \miD$
(the parity conjecture hold if the first part of the Birch and
Swinnerton-Dyer conjecture (Conjecture~\ref{bsd1}) is true).

%If for some prime $p \divs N$,
%$f$ is congruent $\bmod \ \qq$ to a newform
%of level dividing $N/p$ (for Fourier coefficients of index coprime to~$Nq$),
%then $\qq$ divides $c_p(A_f)$ provided the following statement is false: \\

\noindent Case (ii) For some prime $p$ dividing~$N$, $f$ is congruent 
modulo a maximal ideal of~$\T$ over~$q$ to a newform
of level dividing $N/p$ (for Fourier coefficients of index coprime to~$Nq$):\\
In this case, suppose that \\
(*) there is a prime $p$ dividing~$N$ and a maximal ideal~$\qq$ of~$\T$
over~$q$ 
such that 
$f$ is congruent modulo~$\qq$ 
to a newform~$h$ of level dividing $N/p$ 
(for Fourier coefficients of index coprime to~$Nq$), with
$p^2 \ndiv N$, $w_p=-1$, and
$A_h[\qq]$ irreducible.\\
%Consider the following statement:\\
%\noindent (*) for all primes $p \divs N$, 
%%$A[\qq]$ is unramified, then $w_p=1$.\\
%if $f$ is congruent $\bmod \ \qq$ 
%to a newform~$h$ of level dividing $N/p$ 
%(for Fourier coefficients of index coprime to~$Nq$), then either
%$p^2 \divs N$, or $w_p=1$, or
%$A_h[\qq]$ is reducible.\\
%If (*) is false, 
Then $q$ divides $\prod_{\scriptscriptstyle{{p\mid N}}} c_p(A_f)$.
%$\Mid \Sha(A_f) \miD \cdot 
%\prod_{\scriptscriptstyle{{p\mid N}}} c_p(A_f)$.
\comment{
\noindent (i) If for all primes $p \divs N$,
$f$ is not congruent modulo a prime ideal over~$q$ to a newform
of level dividing $N/p$ (for Fourier coefficients of index coprime to~$Nq$),
then $q$ divides $\Mid \Sha(A_f) \miD$. \\
%the following statement is false:
%for all primes $p \divs N$, 
%if $f$ is congruent modulo~$\qq$ 
%to a newform of level dividing $N/p$ 
%(for Fourier coefficients of index coprime to~$Nq$), then $w_p=1$.\\
\noindent (ii) If statement (*) in Proposition~\ref{thm:vissqfree} is false
for all maximal ideals~$\qq$ of~$\T$
with residue characteristic~$q$,
then $q$ divides $\Mid \Sha(A_f) \miD \cdot 
\prod_{\scriptscriptstyle{p\mid N}} c_p(A_f)$.
}
\end{thm}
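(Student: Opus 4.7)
The plan is to combine Corollary~\ref{cor:divs} with Proposition~\ref{thm:vissqfree}, after a reduction from an eigenform at level~$N$ to the underlying newform. Since $q \ndiv 2N$, in particular $q$ is odd and $q^2 \ndiv N$, so Corollary~\ref{cor:divs} produces a normalized eigenform $g \in S_2(\Gamma_0(N), \C)$ with $L(g,1) = 0$ and a prime ideal $\qq$ above~$q$ (in the ring of integers of the field generated by the Fourier coefficients of $f$ and $g$) such that $f \equiv g \pmod \qq$.

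Next I would pass from $g$ to the newform $g^{\rm new}$ of some level $M$ dividing~$N$ that produces it in the old/new decomposition of $S_2(\Gamma_0(N))$. By Atkin--Lehner theory, $a_\ell(g) = a_\ell(g^{\rm new})$ for every prime $\ell \ndiv N$, so $a_\ell(f) \equiv a_\ell(g^{\rm new}) \pmod \qq$ for all $\ell$ coprime to~$N$ (equivalently, coprime to $Nq$, since $q \ndiv N$). The ratio $L(g^{\rm new},s)/L(g,s)$ is a product of Euler factors at primes dividing $N/M$ whose values at $s=1$ are non-zero (the reciprocal roots of the Hecke polynomial have absolute value $\sqrt{\ell}$), so $L(g^{\rm new},1) = 0$. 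By the hypothesis on newforms of level dividing~$N$ with vanishing central $L$-value, the group $A_{g^{\rm new}}(\Q)$ has positive Mordell-Weil rank.

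In case~(i), the assumption that $f$ is not congruent modulo any maximal ideal of~$\T$ over~$q$ to a newform of level dividing~$N/p$ for any prime $p \divs N$, together with the congruence $f \equiv g^{\rm new} \pmod \qq$ just established, forces $M = N$. Thus $g$ is itself a newform of level~$N$ with $A_g(\Q)$ of positive rank, and all the remaining hypotheses of Proposition~\ref{thm:vissqfree} case~(i) are satisfied: irreducibility of $A_f[\qq]$, the congruence $f \equiv g \pmod \qq$, the local conditions $p \not\equiv -w_p \pmod q$ for $p \divs N$, and $p \not\equiv -1 \pmod q$ for $p^2 \divs N$. The conclusion is $q \divs \Mid \Sha(A_f) \miD$, strengthened to $q^2 \divs \Mid \Sha(A_f) \miD$ under the parity conjecture.

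In case~(ii), the hypothesis~(*) of the theorem is identical to the hypothesis of Proposition~\ref{thm:vissqfree} case~(ii); applying that proposition (the substance of which is Proposition~\ref{prop:cp} for the pair $(f,h)$ at the prime~$p$) yields $q \divs \prod_{p\divs N} c_p(A_f)$. The main obstacle is the reduction in the second paragraph: one needs $L(g^{\rm new},1) = 0$ so that the Mordell-Weil hypothesis on newforms kicks in for $g^{\rm new}$, and one needs the congruence with $g^{\rm new}$ (at Fourier coefficients coprime to~$N$) to match exactly what the case~(i) hypothesis forbids; once these are in place, the theorem follows from Proposition~\ref{thm:vissqfree}.
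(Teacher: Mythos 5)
Your proposal is correct and takes the same route as the paper: produce the eigenform $g$ with $L(g,1)=0$ congruent to $f$ (the paper cites Proposition~\ref{prop:ord_to_congr}; you cite Corollary~\ref{cor:divs}, which is the biconditional version of the same fact, so this is no real difference), then invoke Proposition~\ref{thm:vissqfree}. The paper's own proof is only three lines and leaves implicit the passage from the eigenform $g$ to the underlying newform $g^{\rm new}$ (in particular, that $L(g^{\rm new},1)=0$ so that the Mordell--Weil hypothesis applies, and that in case~(i) the level of $g^{\rm new}$ must be $N$); you spell this reduction out via the non-vanishing of the extra Euler factors at $s=1$, which is a genuine and correct detail omitted in the published argument.
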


\begin{proof}
By Prop.~\ref{prop:ord_to_congr}, 
there is a normalized eigenform~$g \in S_2(\Gamma_0(N), \C)$ such that
$L(g,1) = 0$, and $f \equiv g$ modulo 
a prime ideal~$\qq$ over~$q$
in the ring of integers
of the number field generated by the Fourier coefficients
of~$f$ and~$g$.
%a prime ideal~$\qq$ lying over~$q$.
By the hypothesis, the rank of~$A_g(\Q)$ is positive.
The theorem then follows by Proposition~\ref{thm:vissqfree}.
\end{proof}

%By formula~(\ref{sqfreelevelShah}) in Section~\ref{section:extract},
%the Birch and Swinnerton-Dyer
As mentioned towards the end of Section~\ref{section:extract},
the second part of the Birch and Swinnerton-Dyer
conjecture implies
that the part of $\Mid \frac{H^+}{H[I_e]^+ + K^+} \miD$ 
that is coprime to~$| A_f(\Q) |$ divides
${\Mid \Sha(A_f) \miD \cdot \prod_{\scriptscriptstyle{p |N}}  c_p(A_f)}$
up to a power of~$2$ and powers of primes whose squares divide~$N$.
Thus Theorem~\ref{thm:mainsqfree}(ii) is a partial result towards
the second part of the Birch and Swinnerton-Dyer
conjecture.

\begin{rmk} \label{rmk:nontriv}
The factor ${\Mid \frac{H^+}{H[I_e]^+ + K^+} \miD}$
is often non-trivial.
In~\cite{agst:bsd}, one finds a table of all quotients associated
to newforms~$f$ of level $N < 2333$ such that $L(f,1) \neq 0$ and
the conjectured value of $\Mid \Sha(A_f) \miD$ is bigger than~$1$.
In all the cases when the entry under the column labelled~``B''
is {\em not} NONE, there is an odd prime~$q$
and a normalized 
eigenform~$g \in S_2(\Gamma_0(N), \C)$ such that
$L(g,1) = 0$, and $f \equiv g$ modulo a prime ideal lying over~$q$.
If $q^2 \ndiv N$, then by
Prop.~\ref{prop:conr_divs_Lvalue}, $q$ divides
${\Mid \frac{H^+}{H[I_e]^+ + K^+} \miD}$. Thus all such
entries in the table give examples where the factor
${\Mid \frac{H^+}{H[I_e]^+ + K^+} \miD}$ is non-trivial,
and where a prime dividing this factor divides the
conjectural order of~$\Sha(A_f)$.
The first level where this happens (also the first level where 
an odd prime divides the Birch and Swinnerton-Dyer conjectural
order of~$\Sha(A_f)$) is~$389$, where
there is a newform quotient of dimension~$20$
for which $5$ divides ${\Mid \frac{H^+}{H[I_e]^+ + K^+} \miD}$.
When the entry under the column labelled~``B''
is NONE, the factor ${\Mid \frac{H^+}{H[I_e]^+ + K^+} \miD}$
is a (possibly trivial) power of~$2$.
\end{rmk}

\begin{eg}  \label{eg:cp}
We now give an example where a prime divides
the factor $\Mid \frac{H^+}{H[I_e]^+ + K^+} \miD$,
but does not divide the conjectured order of~$\Sha(A_f)$,
and instead divides $\prod_{\scriptscriptstyle{p |N}}  c_p(A_f)$.
This example was obtained using
W. Stein's ``The Modular Forms Explorer''~\cite{stein:modformsdata}
and we use the notation as therein. 
The newform~$f = {\bf 1751C1}$ is of analytic rank~$0$
and 
the newform~$h = {\bf 103A1}$ has positive analytic rank
(note that $1751 = 17 \cdot 103$).
From \cite[Table~2]{agst:bsd}, we find that 
$505$ divides the order of the intersection of
$A_f^{\vee}$ with the abelian subvariety of~$J_0(1751)$ generated
by the images of~$A_h^{\vee}$ under the degeneracy maps. 
Hence, by 
the result in~\cite{ars:moddeg} that the modular exponent
divides the congruence exponent, we conclude that 
$f$ is congruent modulo a prime ideal over~$101$
to an eigenform~$g$ in the subspace
generated by~$h$ and $B_{17}(h)$ 
%such that that $A_g(\Q)$ has positive analytic rank 
(a similar result holds for the congruence prime~$5$). 
We conclude from Proposition~\ref{prop:conr_divs_Lvalue}
that $101$ divides ${\Mid \frac{H^+}{H[I_e]^+ + K^+} \miD}$.
%Now % $1551=3 \cdot 11 \cdot 47$ is the prime factorization, and 
%$1551 = 141 \cdot 11$, and 
One finds that $w_{17} = -1$, and 
since $103$ is prime and $101 \ndiv {\rm numr}(\frac{103-1}{12})$,
$A_f[\qq] \isom A_h[\qq]$ is irreducible.
Hence, Theorem~\ref{thm:vissqfree} implies that $101$ divides~$c_{17}(f)$
(a similar result holds for the congruence prime~$5$
replacing~$101$ as well).
As a confirmation, 
one finds from~\cite{stein:modformsdata} that $c_{17}(A_f) = 6565$
(which is divisible by~$101$ and~$5$).
It turns out that $5$ divides the conjectural value
of~$\Mid \Sha(A_f) \miD$, but $101$ doesn't. 
Note that neither $5$ nor~$101$ divides the order of
either~$A_f(\Q)_{\rm tor}$ or~$\Afdual(\Q)_{\rm tor}$.
\end{eg}

\begin{rmk} \label{rmk:egs}%\ \\
(1) By the comment just after the proof
of Theorem~\ref{thm:mainsqfree}, 
the Birch and Swinnerton-Dyer conjecture suggests
that the extra hypothesis~(*) is unnecessary
in Proposition~\ref{thm:vissqfree} and Theorem~\ref{thm:mainsqfree}.
There are other versions of ``visibility theorems''
similar to Theorem~\ref{thm:vissqfree}. In the version 
in~\cite{agst:vis} and that of Cremona and Mazur (see
the appendix of~\cite{agst:bsd}), instead of our hypothesis~(*)
in Proposition~\ref{thm:vissqfree} 
%that condition that (*) does not hold, 
there is the hypothesis 
that $q$ does not divide~$\prod_{\scriptscriptstyle{p\mid N}} c_p(A_g)$. 
It is not clear to us whether the two hypotheses 
are related (there is some difference in the other hypotheses 
of the theorems as well). \\
(2) In~\cite{agst:bsd}, whenever an odd prime~$q$ divides
${\Mid \frac{H^+}{H[I_e]^+ + K^+} \miD}$ (cf. Remark~\ref{rmk:nontriv})
and the entry
under the column labelled~``Vis'' is a number, one finds that $q^2$
divides~$\Mid \Sha(A) \miD$. % (see also Section~\ref{section:squareness}). 
Note that in most of these cases, we do not expect
$q^2$ to divide ${\Mid \frac{H^+}{H[I_e]^+ + K^+} \miD}$
(roughly speaking, locally the homology is often of rank two
over the Hecke algebra, and our factor captures only half
of the homology; see also Example~\ref{eg:389}).
This seems to suggest that if an odd prime~$q$ divides
${\Mid \frac{H^+}{H[I_e]^+ + K^+} \miD}$, then it also
divides the other factor 
$\Mid \frac{H[I_e]^+}{\Im  e + H[I_e]^+ \cap K^+} \miD$
in~(\ref{mainform}) (perhaps under some additional mild hypotheses).  
\\
(3) 
We have not said much about the factor
$\Mid \frac{H[I_e]^+}{\Im  e + H[I_e]^+ \cap K^+} \miD$, except
the remark just above, which indicates that this factor should be 
divisible by primes~$q$ such that~$f$ is congruent modulo a prime ideal over~$q$
to an eigenform in~$S_2(\Gamma_0(N),\C)$
with positive analytic rank.
However, these are not all the primes that divide this factor
in general.
In most of the examples 
in~\cite{agst:bsd} where the entry under the column~B is
NONE, this factor is non-trivial and is 
divisible by primes~$q$ such that 
%for which one cannot directly apply a
%``visibility theorem'' like Theorem~\ref{thm:vissqfree}
there is no eigenform~$g$ congruent to~$f$ modulo a prime over~$q$
with $L(g,1) = 0$. % as required in the theorem). 
It is our guess, however, that the factor
$\Mid \frac{H[I_e]^+}{\Im  e + H[I_e]^+ \cap K^+} \miD$ can be
explained by ``visibility at higher level'', as we now explain.
It can be shown that the non-trivial elements of~$\Sha(\Afdual)$ 
(whose order is the same as that of~$\Sha(A_f)$)
whose existence is implied by Theorems~\ref{thm:mainprime}
and~\ref{thm:mainsqfree} are in the kernel of the natural map
$\Sha(\Afdual) \ra \Sha(J_0(N))$ (this can be seen from the proof 
of~\cite[Thm.~6.1]{dsw}, or better still, from the analogous Theorem~3.1
of~\cite{agst:vis}). In general, if $J$ is an abelian variety
with a map $\Afdual \ra J$, then we say that an element of~$\Sha(\Afdual)$
is {\em visible in~$J$} if it is in the kernel of the induced map
$\Sha(\Afdual) \ra \Sha(J)$. 
If an element of~$\Sha(\Afdual)$ is visible in~$J_0(N)$ under
the natural inclusion $\Afdual \ra J_0(N)$, we say
that it is {\em visible at the same level}. Now one has certain
natural degeneracy maps $J_0(N) \ra J_0(NM)$ for every positive integer~$M$.
We say that an element of~$\Sha(\Afdual)$ is {\em visible at higher level}
if it is visible in~$J_0(NM)$ under a map $\Afdual \ra J_0(N) \ra J_0(NM)$
for some integer~$M$ bigger than one.
In~\cite[\S4.2]{agst:vis}, there is an example of a particular~$f$
for which an element of~$\Sha(\Afdual)$ is not visible at the same level,
but becomes visible at a higher level.
In fact, it has been conjectured that any element 
the Shafarevich-Tate group can be explained by 
visibility at some higher level (see~Conjecture~7.1.1 in~\cite{jetchev-stein}
for details and a precise statement). 
Thus there is hope that the factor
$\Mid \frac{H[I_e]^+}{\Im  e + H[I_e]^+ \cap K^+} \miD$
may be interpreted by considerations of visibility at 
the same and higher levels.
This also suggests that while our formula for $\LAf(1)/\OAf$ was 
obtained via the parametrization of~$A_f$ by~$J_0(N)$, perhaps one
should try to prove and use a similar formula for $\LAf(1)/\OAf$ 
obtained via a parametrization of~$A_f$ by~$J_0(NM)$ for an
integer~$M$.
\end{rmk}

\section{Appendix: component groups} \label{sec:app}
The proofs of some of the results in Section~\ref{section:visibility}
employ the language of representations (\`a la~\cite{bloch-kato})
as opposed to the language of abelian varieties. In this appendix
we show that if $p$ is a prime, then
the definitions of the prime to~$p$ part of the
component group at~$p$ coincide in the two languages. 
This is well known
to experts, and our aim is simply to provide some details that
we could not find in the literature. 
%We are grateful to 
%D.~Prasad and M.~Kim for some clarifications regarding this appendix.

Let $A$ be an abelian variety and $\NerA$ its N\'eron model.
Let $\NerA^0$ denote the largest open subgroup scheme
of~$\NerA$ in which all the fibers are connected.
Let $p$ be a prime.
In Section~\ref{section:intro}, we had defined
$c_p(A) = [\NerA_p(\F_p): \NerA^0_p(\F_p)]$,
where the subscript~$p$ denotes the special fiber at~$p$.
%(equivalently the base change to~$\Fp$).

Let  $B$ be a group and $\ell$ is a prime. 
Consider the inverse system $\{B[\ell^n]\}_{n \in \N}$ 
where for each~$n$, the map $B[\ell^{n+1}] \ra B[\ell^n]$ is multiplication
by~$\ell$. We denote by $\Tl B$ the inverse limit of the this
system, which is a $\Zl$-module in a natural way.
Let $\Vl B  = \Tl B \tensor \Ql$, and
$\Wl B = \Vl B / \Tl B$. 
The natural map $\Tl B \tensor \Ql
\ra \Tl B \tensor \Ql/\Zl$ is surjective, with kernel $\Tl B$;
hence we have a canonical isomorphism $\Wl B \isom \Tl B \tensor \Ql/\Zl$.

For simplicity of notation, for the case $B = A(\Qbar)$,
we simply write $\Tl$, $\Vl$, and $\Wl$ for the corresponding 
objects.
In~\cite[\S4]{dsw}, the authors define an integer~$c_p(1)$
associated to~$A$ as follows:
if $I_p$ denotes the inertia subgroup of
the absolute Galois group of~$\Q_p$ and $\ell \neq p$ is a prime, then
\begin{eqnarray} \label{dswcp}
\ord_\ell(c_p(1)) = \# {H^0(\Qp, \Wl^\Ip)}
 - \# {H^0(\Qp, \Vl^\Ip/ \Tl^\Ip)}, 
\end{eqnarray}
%where if $M$ is a $\Zl$-module, then $\length M$ denotes its length 
%as a $\Zl$-module
(note that in their notation
$T_\ell(1) = \Tl A$, as mentioned at the end of~\S1 in~\cite{dsw}).
The definition of $\ord_p(c_p(1))$ is more complicated, and we
shall not be concerned with it.
Our first goal in this appendix is to show
that if $\ell \neq p$ is a prime, then
$\ord_\ell(c_p(1)) = \ord_\ell (c_p(A))$ (this is used
in the proof of Prop.~\ref{prop:cp}).
The other goal of this appendix is to give some details of
how the definition in~(\ref{dswcp})
comes naturally from the formulation
of the Bloch-Kato conjecture~\cite{bloch-kato}.

Henceforth, $\ell \neq p$ is a prime. 
The following lemma is well known:

\begin{lem} \label{lemma:qlzl}
Suppose the inverse system $\{B[\ell^n]\}_{n \in \N}$ is surjective,
i.e., the multiplication
by~$\ell$ maps $B[\ell^{n+1}] \ra B[\ell^n]$ are surjective
for all~$n$. Then one has a canonical isomorphism (of groups)
$\Tl B \tensor \Ql/\Zl = B[\ell^\infty]$.
\end{lem}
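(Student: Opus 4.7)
The plan is to reduce the assertion to checking, for every $n$, that the natural projection induces an isomorphism
\[
\Tl B / \ell^n \Tl B \;\stackrel{\cong}{\lra}\; B[\ell^n],
\]
compatibly with $n$, and then pass to the direct limit. First I would identify $\Ql/\Zl$ with $\varinjlim_n \frac{1}{\ell^n}\Zl/\Zl \cong \varinjlim_n \Z/\ell^n\Z$, where the transition maps are the inclusions $\Z/\ell^n\Z \hookrightarrow \Z/\ell^{n+1}\Z$ given by multiplication by $\ell$. Since tensor products commute with direct limits, this gives
\[
\Tl B \tensor \Ql/\Zl \;=\; \varinjlim_n \bigl(\Tl B / \ell^n \Tl B\bigr).
\]
On the other side, $B[\ell^\infty] = \bigcup_n B[\ell^n] = \varinjlim_n B[\ell^n]$, with the direct system given by the inclusions. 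Thus it suffices to construct, for each $n$, a canonical isomorphism $\Tl B / \ell^n \Tl B \cong B[\ell^n]$ that is compatible with the respective transition maps.

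Next, I would define the map. For each $n$, let $p_n : \Tl B \lra B[\ell^n]$ be the $n$-th projection $(y_k)_k \mapsto y_n$. Because $B[\ell^n]$ is killed by $\ell^n$, this factors through $\Tl B / \ell^n \Tl B$, yielding a canonical map $\varphi_n : \Tl B/\ell^n \Tl B \to B[\ell^n]$. Surjectivity of $\varphi_n$ is where the hypothesis enters: given $x \in B[\ell^n]$, I lift $x$ to a compatible sequence $(y_k)_k \in \Tl B$ by putting $y_k = \ell^{n-k}x$ for $k \le n$ and using the surjectivity of the transition maps $B[\ell^{k+1}] \to B[\ell^k]$ iteratively to choose $y_{n+1}, y_{n+2}, \ldots$ with $\ell y_{k+1} = y_k$. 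Then $p_n((y_k)) = x$.

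For injectivity, suppose $(y_k) \in \Tl B$ satisfies $y_n = 0$. Then set $z_k := y_{k+n}$. Applying $\ell$ a total of $k$ times to $y_{k+n}$ yields $y_n = 0$, so $z_k \in B[\ell^k]$; and $\ell z_{k+1} = \ell y_{k+n+1} = y_{k+n} = z_k$, so $(z_k) \in \Tl B$. Finally $\ell^n z_k = \ell^n y_{k+n} = y_k$, i.e.\ $(y_k) = \ell^n(z_k) \in \ell^n \Tl B$. This shows $\ker \varphi_n = 0$. Compatibility in $n$ is a straightforward check: the square relating $\varphi_n$ and $\varphi_{n+1}$ commutes because the transition on the left is induced by multiplication by $\ell$ (built into the identification of $\Ql/\Zl$ as a direct limit) while on the right it is the inclusion $B[\ell^n] \hookrightarrow B[\ell^{n+1}]$, and both correspond under the projections $p_n$ and $p_{n+1}$. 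Passing to the direct limit yields the claimed canonical isomorphism. The main (and essentially only) obstacle is verifying surjectivity of $\varphi_n$, which is precisely what forces the hypothesis that the transition maps in the inverse system are surjective.
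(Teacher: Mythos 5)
Your proof is correct and follows essentially the same route as the paper's: identify $\Ql/\Zl$ as the direct limit of $\Z/\ell^n\Z$ under multiplication by $\ell$, reduce to the isomorphisms $\Tl B/\ell^n \Tl B \cong B[\ell^n]$ given by the projections, and pass to the direct limit. You simply spell out the surjectivity (lifting via the hypothesis) and injectivity (shifting indices) of $\varphi_n$ in more detail than the paper, which states these as evident from the construction of the inverse limit.
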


\begin{proof}
Our proof is inspired by~\cite[IX.11]{grothendieck:monodromie}.
Consider the direct system $\{\Zl / \ell^n \Zl\}_{n \in \N}$,
where the maps $\Zl / \ell^n \Zl \ra \Zl / \ell^{n+1} \Zl$
are multiplication by~$\ell$. The direct limit 
$\dirlimn \Zl / \ell^n \Zl$ of this system
is the direct sum $\oplus (\Zl / \ell^n \Zl)$ modulo the subgroup
generated by all elements of the form
$(\ldots, 0, x_n, 0, \ldots, 0, x_m, 0, \dots)$, 
such that $x_m = - \ell^{m-n} x_n$,
where $m > n$ and $x_n$ and~$x_m$ are the entries in the $n$-th
and $m$-th position respectively. Then 
the assignment $(x_n) \in \oplus (\Zl / \ell^n \Zl)$ maps to
$\sum \frac{x_n}{\ell^n}$ gives a canonical isomorphism 
$\dirlimn \Zl / \ell^n \Zl \isom \Ql / \Zl$.

Next, let $(t_i)_{i \in \N}$ be an element of $\Tl B$.
Let $m$ be any positive integer. Using the construction of
the inverse limit and the fact that 
$\{B[\ell^n]\}$ is a surjective system, one sees that  
the assignment $(t_i) \tensor 1 \mapsto t_m$ gives
a canonical isomorphism $\Tl B \tensor (\Zl / \ell^m \Zl) \isom B[\ell^m]$.
This maps the direct system $\{\Tl B \tensor (\Zl / \ell^m \Zl)\}_{m \in \N}$
with the multiplication by~$\ell$ map on the second component and 
identity on the first
isomorphically to the direct system $\{B[\ell^m]\}$ where the maps
are the natural
inclusion maps $B[\ell^m] \hookrightarrow B[\ell^{m+1}]$.

Thus we see that $\Tl B \tensor \Ql/\Zl = 
\dirlimm \Tl B \tensor \Zl / \ell^m \Zl
= \dirlimm B[\ell^m] = B[\ell^\infty]$.
\end{proof}

We now apply this to the case $B = A(\Qbar)$.
Note that since $\ell \neq p$, 
the reduction map gives an isomorphism of Galois modules
\begin{eqnarray}\label{redisom}
A[\ell^n]^\Ip \isom \NerA_p(\overline{\bf F}_p)[\ell^n]
\end{eqnarray}
(e.g., see~\cite[p.~495]{serre-tate}).

\begin{prop}
We have isomorphisms
$H^0(\Q_p, \Wl^\Ip) \isom \NerA_p(\F_p)[\ell^\infty]$ and
$H^0(\Qp, \Vl^\Ip/\Tl^\Ip) \isom \NerA_p^0(\F_p)[\ell^\infty]$.
\end{prop}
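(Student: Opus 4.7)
The plan is to combine the reduction isomorphism~(\ref{redisom}) with Lemma~\ref{lemma:qlzl}, and then pass to Frobenius invariants at the end to cut down from $\overline{\F}_p$-points to $\Fp$-points.

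For the first isomorphism, I would apply Lemma~\ref{lemma:qlzl} to $B=A(\Qbar)$: since multiplication by~$\ell$ is surjective on the $\Qbar$-points of an abelian variety, the hypothesis of the lemma holds, giving a canonical Galois-equivariant identification $\Wl \isom A[\ell^\infty]$. Taking $\Ip$-invariants and passing to the direct limit in~(\ref{redisom}) yields
\[
\Wl^{\Ip} \isom A[\ell^\infty]^{\Ip} \isom \NerA_p(\overline{\F}_p)[\ell^\infty]
\]
as modules for $\mathrm{Gal}(\overline{\F}_p/\Fp)$, and taking Frobenius invariants then gives $H^0(\Qp,\Wl^{\Ip}) \isom \NerA_p(\Fp)[\ell^\infty]$.

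For the second isomorphism, the crucial new step is to identify $\Tl^{\Ip}$ with $\Tl \NerA^0_p(\overline{\F}_p)$. Let $C_n$ denote the quotient $\NerA_p(\overline{\F}_p)[\ell^n]/\NerA^0_p(\overline{\F}_p)[\ell^n]$; each $C_n$ embeds into the finite component group $\NerA_p(\overline{\F}_p)/\NerA^0_p(\overline{\F}_p)$, so the sequence $\{C_n\}$ is uniformly bounded and stabilizes to a single finite group for~$n$ large, on which the transition map is multiplication by~$\ell$. Since the intersection of iterated images of multiplication by~$\ell$ on any finite abelian group is zero, $\varprojlim C_n = 0$. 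Applying $\varprojlim$ to the short exact sequences $0 \to \NerA^0_p(\overline{\F}_p)[\ell^n] \to \NerA_p(\overline{\F}_p)[\ell^n] \to C_n \to 0$ and using~(\ref{redisom}) to identify the middle inverse limit with $\Tl^{\Ip}$ then yields $\Tl^{\Ip} \isom \Tl \NerA^0_p(\overline{\F}_p)$.

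Finally I would verify that $\NerA^0_p(\overline{\F}_p)$ is $\ell$-divisible, so that Lemma~\ref{lemma:qlzl} applies with $B = \NerA^0_p(\overline{\F}_p)$: by Chevalley's theorem $\NerA^0_p$ is an extension of an abelian variety by an extension of a torus by a unipotent group, and for $\ell \neq p$ each of these has $\ell$-divisible $\overline{\F}_p$-points (for the unipotent part, $\ell$ is a unit in~$\overline{\F}_p$, so it acts as an isomorphism). The lemma then produces
\[
\Vl^{\Ip}/\Tl^{\Ip} \isom \Tl^{\Ip}\otimes\Ql/\Zl \isom \Tl \NerA^0_p(\overline{\F}_p) \otimes \Ql/\Zl \isom \NerA^0_p(\overline{\F}_p)[\ell^\infty],
\]
and taking Frobenius invariants completes the proof. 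The main technical step is the vanishing of $\varprojlim C_n$, which is precisely what is responsible for the shift from $\NerA_p$ to $\NerA^0_p$ when passing from~$\Wl^{\Ip}$ to~$\Vl^{\Ip}/\Tl^{\Ip}$.
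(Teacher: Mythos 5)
Your proof is correct and follows essentially the same route as the paper's: combine Lemma~\ref{lemma:qlzl} with the reduction isomorphism~(\ref{redisom}), reduce the second isomorphism to the identification $\Tl^{\Ip}\isom\Tl\NerA^0_p(\overline{\F}_p)$, and take Frobenius invariants at the end. Your $\varprojlim C_n = 0$ computation and the Chevalley-type divisibility check are just more explicit versions of the paper's terser remarks that $\NerA^0_p$ is connected and that the infinitely $\ell$-divisible points of $\NerA_p(\overline{\F}_p)$ all lie in $\NerA^0_p(\overline{\F}_p)$.
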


\begin{proof}
By lemma~\ref{lemma:qlzl}, $\Wl \isom A[\ell^\infty]$,
and from its proof, we see that this isomorphism respects
the action of the absolute Galois group of~$\Q_p$.
Using~(\ref{redisom}), we get 
$\Wl^\Ip = A[\ell^\infty]^\Ip \isom \NerA_p(\overline{\bf F}_p)[\ell^\infty]$.
Hence $H^0(\Q_p, \Wl^\Ip) = H^0( \Q_p / \Ip, \Wl^\Ip)
\isom (\NerA_p(\overline{\bf F}_p)[\ell^\infty])^\Frobp
= \NerA_p(\F_p)[\ell^\infty]$, which gives the first
isomorphism in the Proposition.

Next, note that since $\NerA_p^0$ is connected, the
system $\NerA_p^0[\ell^n]$ is surjective. Moreover,
since $\NerA_p^0$ is of finite index in $\NerA_p$,
by the construction of inverse limits, we have
$\Tl \NerA_p = \Tl \NerA_p^0$ (the only infinitely $\ell$-divisible
points in $\NerA_p(\overline{\bf F}_p)$ are
those coming from $\NerA_p^0(\overline{\bf F}_p)$).
Thus from~(\ref{redisom}), we have
an isomorphism $(\Tl A)^\Ip = \Tl (A^\Ip) \isom \Tl \NerA_p
= \Tl \NerA_p^0$. 
Thus 
\begin{tabbing}
\= $H^0(\Qp, \Vl^\Ip/\Tl^\Ip) = H^0(\Qp, \Tl^\Ip \tensor \Ql/\Zl)$ \\
\> $=  H^0(\Qp/\Ip, (\Tl A)^\Ip \tensor \Ql/\Zl)
\isom H^0(\Qp/\Ip, (\Tl \NerA_p^0)^\Ip \tensor \Ql/\Zl)$ \\
\> $= (\NerA_p^0(\overline{\bf F}_p)[\ell^\infty])^\Frobp
= \NerA_p^0(\F_p)[\ell^\infty]$.
\end{tabbing}
\end{proof}

Thus it follows that 
$\ord_\ell(c_p(1)) = \# H^0(\Qp, \Wl^\Ip) - 
\# H^0(\Qp, \Vl^\Ip/ \Tl^\Ip)
= \ord_\ell \NerA_p(\F_p) - \ord_\ell \NerA^0_p(\F_p)
= \ord_\ell (c_p(A))$, as was to be shown.

We next indicate how the definition in~(\ref{dswcp})
arises naturally from the formulation
of the Bloch-Kato conjecture~\cite{bloch-kato}.
Following 
the discussion on p.26-30 of~\cite{flach:degree},
if $c_p$ denotes the Tamagawa measure of~$A(\Ql)$ defined
in~\cite{bloch-kato}, then 
\begin{eqnarray} \label{bkcp}
\ord_\ell(c_p) = \# H^0(\Qp, \Wl) - \ord_\ell(P_p(p^{-1})),
\end{eqnarray}
where 
$P_p(p^{-s}) = \det(1 - \Frobp^{-1} p^{-s} 
\mid H^1_{\rm et}(A \tensor \Qbar, \Q_\ell))$
is the usual Euler factor at~$p$.
To see that the prime-to-$p$ parts of $c_p$ and $c_p(1)$ coincide,
one only needs the following Proposition, whose proof is skipped
in~\cite{flach:degree}.

\begin{prop}
$\ord_\ell(P_p(p^{-1})) = \# H^0(\Ql, \Vl^\Ip/\Tl^\Ip)$.
\end{prop}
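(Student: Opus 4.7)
The plan is to adapt, on the scale of the inertia-fixed representation $(V_\ell^\Ip, T_\ell^\Ip)$, the same snake-lemma bookkeeping used earlier in the appendix for $c_p(A)$, and then to match the resulting characteristic polynomial on $V_\ell^\Ip$ with the one on $(H^1_{\rm et})^\Ip$ that appears in $P_p$ via Poincar\'e/Weil duality.

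First I would observe that $\Ip$ acts trivially on the quotient $V_\ell^\Ip/T_\ell^\Ip$, so $H^0(\Qp, V_\ell^\Ip/T_\ell^\Ip) = (V_\ell^\Ip/T_\ell^\Ip)^{\Frob_p}$. Next, applying the snake lemma to
$$0 \to T_\ell^\Ip \to V_\ell^\Ip \to V_\ell^\Ip/T_\ell^\Ip \to 0$$
with the endomorphism $1 - \Frob_p$ acting on each term yields a six-term exact sequence of kernels and cokernels. Via the identification $V_\ell^\Ip \cong V_\ell \NerA_p^0$ used in the body of the appendix, the Frobenius eigenvalues on $V_\ell^\Ip$ arise only from the toric and abelian parts of $\NerA_p^0$; by the Weil conjectures their absolute values are $p$ and $\sqrt{p}$ respectively, so none of them equals $1$ and $1 - \Frob_p$ is invertible on $V_\ell^\Ip$. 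The six-term sequence therefore collapses to a canonical isomorphism
$$(V_\ell^\Ip/T_\ell^\Ip)^{\Frob_p} \;\cong\; T_\ell^\Ip/(1-\Frob_p)T_\ell^\Ip,$$
whose order, by the elementary-divisors formula for a $\Zl$-linear endomorphism of a free $\Zl$-module of finite rank, is $|\det(1-\Frob_p \mid V_\ell^\Ip)|_\ell^{-1} = \ell^{\ord_\ell \det(1-\Frob_p \mid V_\ell^\Ip)}$.

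It then remains to match $\ord_\ell \det(1 - \Frob_p \mid V_\ell^\Ip)$ with $\ord_\ell P_p(p^{-1})$. Using the Weil pairing $V_\ell A \otimes V_\ell A^\vee \to \Ql(1)$ to identify $H^1_{\rm et}(A_{\overline\Q}, \Ql) \cong V_\ell(A^\vee)^*$, the action of $\Frob_p^{-1}$ on the dual corresponds (up to Tate twist) to that of $\Frob_p$ on $V_\ell A^\vee$. Since $A$ and $A^\vee$ are isogenous, their inertia invariants have the same Frobenius characteristic polynomial up to an $\ell$-adic unit, and a Tate twist multiplies each eigenvalue by a power of $p$, which is an $\ell$-adic unit because $\ell \neq p$. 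Substituting $T = p^{-1}$ therefore yields
$$\ord_\ell P_p(p^{-1}) \;=\; \ord_\ell \det(1 - \Frob_p^{-1} p^{-1} \mid (H^1_{\rm et})^\Ip) \;=\; \ord_\ell \det(1 - \Frob_p \mid V_\ell^\Ip),$$
completing the proof.

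The main obstacle is this last duality step: one must carefully reconcile the Tate twists and the arithmetic-versus-geometric Frobenius conventions so that the two Euler-factor expressions genuinely agree (and not merely up to a spurious factor of $\ell$). A cleaner alternative that avoids the duality entirely is to invoke Lang's theorem for the connected smooth commutative group scheme $\NerA_p^0$: it gives $|\NerA_p^0(\F_p)| = |\det(1 - \Frob_p \mid \mathrm{Lie}(\NerA_p^0) \otimes \overline{\F}_p)|$, whose $\ell$-adic valuation matches $\ord_\ell \det(1-\Frob_p \mid V_\ell^\Ip)$; combining this with the computation in the proof of the previous proposition then identifies both sides of the proposition with $\ord_\ell |\NerA_p^0(\F_p)|$.
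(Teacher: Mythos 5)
Your main line of argument reproduces the paper's proof: the snake-lemma connecting map for $1-\Frob_p$ acting on $0\to \Tl^\Ip\to \Vl^\Ip\to \Vl^\Ip/\Tl^\Ip\to 0$ is precisely the map $\phi$ that the paper constructs by hand in its Lemma~\ref{lem:3} and then checks to be bijective, and the elementary-divisors step is also the paper's. One small improvement you could make: to see $1-\Frob_p$ is invertible on $\Vl^\Ip$ you invoke Weil's Riemann hypothesis, whereas the paper gets this elementarily from $(\Tl^\Ip)^{\Frob_p}\cong \Tl\,\NerA_p(\F_p)=0$, which holds simply because $\NerA_p(\F_p)$ is finite. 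You also correctly flag the duality step as the delicate point; the paper deals with it by choosing a polarization to identify $\Vl(A^\vee)$ with $\Vl$, explicitly noting the identification holds over $\Ql$ but perhaps not $\Zl$ — that is harmless here because the quantity is a determinant computed on the $\Ql$-vector space, where isogenous lattices give the same answer, so your worry about a ``spurious factor of $\ell$'' evaporates once one localizes the computation to $V_\ell$.

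Your proposed ``cleaner alternative'' via Lang's theorem does not work as stated: the claimed identity $|\NerA_p^0(\F_p)| = |\det(1-\Frob_p \mid \mathrm{Lie}(\NerA_p^0)\otimes\overline{\F}_p)|$ is false, since the differential of the $p$-power Frobenius vanishes identically on the Lie algebra in characteristic $p$, so that determinant is trivially $1$. The valid form of the Lang-isogeny point count is $\ord_\ell|\NerA_p^0(\F_p)| = \ord_\ell\det(1-\Frob_p \mid \Vl\,\NerA_p^0)$, i.e.\ one must use the $\ell$-adic Tate module (equivalently $H^1_{\rm et}$) rather than the Lie algebra — but this is precisely the computation your main argument already performs, so it is not an independent route around the duality step.
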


\begin{proof}
We have the following isomorphisms of Galois modules:
$H^1_{\rm et}(A \tensor \Qbar, \Q_\ell) \isom \Hom(\Vl, \Ql)
\isom \Vl(-1)$, where the second isomorphism comes from 
the Weil pairing combined with a polarization map on~$A^\vee$ 
(thus the isomorphism may not hold over~$\Zl$). Hence
\begin{eqnarray} \label{eqn:1}
P_p(p^{-1})  & = &
\det(1 - \Frobp^{-1} p^{-1} \mid H^1_{\rm et}(A \tensor \Qbar, \Q_\ell)) 
\nonumber \\
& = & \det(1 - \Frobp^{-1} p^{-1} \mid \Vl(-1) ) \nonumber \\
& =& \det(1 - \Frobp^{-1} \mid \Vl).
\end{eqnarray}

%Now using the structure theory of modules over PID's 
%(see, e.g.,~\cite[Thm.~III.7.8]{lang:algebra}), one can check
%that $|\det(1 - \Frobp^{-1} \mid \Vl)|_\ell^{-1} 
%= \# \Tl^\Ip/(1 - \Frobp^{-1}) \Tl^\Ip$.
Now there is a $\Zl$-basis 
$(e_1, \ldots, e_n)$ of~$\Tl^\Ip$ and 
$a_1, \ldots, a_n \in \Zl$ such that
$(a_1 e_1, \ldots, a_n e_n)$ is a basis for
the submodule $(1 - \Frobp^{-1}) \Tl^\Ip$
(see, e.g., \cite[Thm.~III.7.8]{lang:algebra}).
Then $\# \Tl^\Ip/(1 - \Frobp^{-1}) \Tl^\Ip = \ord_\ell(\prod_{i=1}^n a_i)$.
The endomorphism $(1 - \Frobp^{-1})$ may map~$\Tl^\Ip$ to
some basis of $(1 - \Frobp^{-1})\Tl^\Ip$ other than 
$(a_1 e_1, \ldots, a_n e_n)$,
but the change of basis matrix has determinant invertible in~$\Zl$.
Thus 
\begin{eqnarray} \label{eqn:2}
\ord_\ell(\det(1 - \Frobp^{-1} \mid \Vl)) &  = &
\ord_\ell(\prod_{i=1}^n a_i) \nonumber \\
& = & \# \Tl^\Ip/(1 - \Frobp^{-1}) \Tl^\Ip.
\end{eqnarray}
Next we need the following lemma:

\begin{lem} \label{lem:3}
The groups 
%The map $\phi: (\Vl^\Ip/\Tl^\Ip)^{\Frobp^{-1}} \ra 
$ (\Vl^\Ip/\Tl^\Ip)^{\Frobp^{-1}}$ and 
$\Tl^\Ip/(1 - \Frobp^{-1}) \Tl^\Ip$ are isomorphic.
%induced by mapping $v \in \Vl^\Ip$ to $\Frobp^{-1} v - v$
%is an isomorphism.
\end{lem}

\begin{proof}
If $v \in \Vl^\Ip$ is such that
$v + \Tl^\Ip \in (\Vl^\Ip/\Tl^\Ip)^{\Frobp^{-1}} $,
then $\Frobp^{-1} v - v \in \Tl^\Ip$.
This gives us a homomorphism $\phi: (\Vl^\Ip/\Tl^\Ip)^{\Frobp^{-1}} \ra 
(\Vl^\Ip/\Tl^\Ip)^{\Frobp^{-1}}$, which we will show is an isomorphism.
 
For simplicity, let $F = \Frobp^{-1}, T = (\Tl A)^\Ip$ and $V = (\Vl A)^\Ip$.
By~(\ref{redisom}), $T \isom \Tl \NerA_p(\overline{\bf F}_p)$; hence
$T^F = \Tl \NerA_p(\F_p)$. Thus $T^F$ is trivial,
and hence so is $V^F$.
We have an exact sequence
$$0 \ra V^F \ra V \stackrel{(1 - F)}{\longrightarrow} V \ra V/(1-F)V \ra 0.$$
Since $V^F$ is trivial, this shows that 
$V/(1-F)V$ is of dimension zero. Hence
$T/(1 - F)T$ is torsion.
Thus if $t\in T$, then there is an integer~$n$ such that
$nt \in (1 - F)T$; so there is a $t' \in T$ such
that $nt = (1-F) t'$. Then $v = t'/n \in V$ maps to~$t$.
Thus $\phi$ is surjective. Suppose $v \in V$ is such that
$\phi(v) = 0$. Then $F v - v = F t - t$, for 
some $t \in T$, so $v-t$ is fixed by $F$. 
But $V^F = 0$, so $v = t \in T$.
Thus $\phi$ is injective, and hence an isomorphism.
\end{proof}

By~(\ref{eqn:1}), (\ref{eqn:2}),  and Lemma~\ref{lem:3},
\begin{eqnarray*}
\ord_\ell(P_p(p^{-1})) = \# (\Vl/\Tl)^{\Frobp^{-1}}   
= \# (\Vl/\Tl)^\Frobp   
= \# H^0(\Ql, \Vl^\Ip/\Tl^\Ip), 
\end{eqnarray*}
which proves
the proposition.
\end{proof}

\providecommand{\bysame}{\leavevmode\hbox to3em{\hrulefill}\thinspace}
\providecommand{\MR}{\relax\ifhmode\unskip\space\fi MR }
% \MRhref is called by the amsart/book/proc definition of \MR.
\providecommand{\MRhref}[2]{%
  \href{http://www.ams.org/mathscinet-getitem?mr=#1}{#2}
}

\bibliographystyle{amsalpha}         
%\bibliography{biblio}
%\end{document}

\providecommand{\href}[2]{#2}

\end{document}